\documentclass[11pt]{amsart}
\usepackage{amsmath,amssymb,a4wide,scalerel}
\usepackage{mathabx}
\usepackage{color}
\usepackage{xcolor,graphicx}
\usepackage{mathrsfs}
\usepackage{tikz,pgfplots}
\usepackage{subcaption}
\usepackage[normalem]{ulem}
\usepackage{hyperref}
\usepackage{dsfont}
\usepackage{comment}

\newtheorem{theorem}{Theorem}
\newtheorem{lemma}[theorem]{Lemma}
\newtheorem{proposition}[theorem]{Proposition}
\newtheorem{remark}[theorem]{Remark}

\newtheorem{assumption}{Assumption}

\newcommand{\Falpha}{F_\alpha}
\newcommand{\dt}{\tau}
\newcommand{\uk}{{\underline{k}}}

\newcommand{\R}{{\mathbb R}}
\newcommand{\N}{{\mathbb N}}
\newcommand{\E}{{\mathbb E}}

\newcommand{\e}{\mathrm{e}}
\newcommand{\abs}[1]{\lvert #1 \rvert}
\providecommand{\norm}[1]{\left\lVert#1\right\rVert}
\newcommand*\diff{\mathop{}\!\mathrm{d}}
\newcommand{\COV}{\operatorname{Cov}}
\newcommand{\x}{\tilde{x}}
\newcommand{\y}{\tilde{y}}

\def\msquare{\mathord{\scalerel*{\Box}{gX}}}

\DeclareMathOperator{\diag}{diag}

\usepackage{color}

\author{Charles-Edouard Br\'ehier}
              \address{Universite de Pau et des Pays de l'Adour, E2S UPPA, CNRS, LMAP, Pau, France}
              \email{charles-edouard.brehier@univ-pau.fr}

\author{David Cohen}
              \address{Department of Mathematical Sciences,
              Chalmers University of Technology and University of Gothenburg, 41296~Gothenburg, Sweden}
              \email{\tt david.cohen@chalmers.se}

\author{Llu\'{i}s Quer-Sardanyons}
             \address{Department of Mathematics, Universitat Aut\`onoma de Barcelona, 08193~Bellaterra, Spain}
              \email{\tt lluis.quer@uab.cat}

\author{Johan Ulander}
              \address{Department of Mathematical Sciences,
              Chalmers University of Technology and University of Gothenburg, 41296~Gothenburg, Sweden}
              \curraddr{Swedish Defence Research Agency (FOI), 58330~Link\"oping, Sweden}
              \email{\tt johanul@chalmers.se}
              \email{\tt johan.ulander@foi.se}

\begin{document}

\title[Stochastic exponential integrators for SPDEs driven by Riesz noise]{Analysis of an exponential integrator for stochastic PDE{\tiny s} driven by Riesz noise}

\begin{abstract}
We present and study an explicit exponential integrator for parabolic SPDEs in any dimension
driven by a Gaussian noise which is white in time and with spatial correlation given by a Riesz kernel.
Under assumptions on the coefficients of the SPDE, we prove strong error bounds and exhibit how the rate
of convergence depends on the exponent in the Riesz kernel.
Finally, numerical experiments in spatial dimensions $1$ and $2$ are provided in order to confirm
our convergence results.
\end{abstract}

\maketitle
{\small\noindent
{\bf AMS Classification.} 60H15. 60H35. 60M15. 65C20. 65C30. 65J08.

\bigskip\noindent{\bf Keywords.} Stochastic partial differential equations. Stochastic parabolic equations.
Stochastic heat equations. Riesz kernel. Stochastic exponential integrator.
$L^p(\Omega)$-convergence.

\section{Introduction}
The analysis of convergence of numerical schemes for stochastic partial differential equations (SPDEs)
started with the seminal paper \cite{MR1341554}. In this reference, the authors showed the convergence in
probability of an implicit numerical scheme to the solution of a stochastic heat equation with a nonlinear term
and driven by additive space-time white noise defined on the interval $[0,1]$. The literature on (strong) convergence
of numerical schemes to solutions of SPDEs, in particular of the parabolic type, in dimension $1$ is
now well established, see e.g \cite{MR3308418,MR1644183,MR1873517,MR1803132,MR2471778,MR1953619,MR2600932,MR4050540,MR2211047,MR4179715,MR2646103,MR3047942,MR2147242,MR3327065,
MR3649432,MR4092279,MR3984308,MR4049401,MR4576086,MR4829617,MR4648763,MR4780408,djurdjevac2024higherorderapproximationnonlinear,MR4714476,MR4858504} and references therein.

Moving beyond the one dimensional case and focusing on the random field setting, convergence results of numerical methods for SPDEs in dimension larger than $1$ are currently lacking. Without being exhaustive, we refer the interested reader to \cite{cw00,MR2147242,MR3459554}. Indeed, the results presented in our paper are closely related to those obtained in \cite{MR2147242}. More precisely, in the latter reference the authors study the speed of convergence of explicit and implicit (semi-) discretization schemes for the following semilinear stochastic heat equation driven by Riesz noise with parameter $\alpha\in(0,2\wedge d)$, where $d\geq1$ is the space dimension:
\begin{equation}
\label{probIntro}
\begin{cases}
\frac{\partial u}{\partial t}(t,x)=\Delta u(t,x)+b(t,x,u(t,x))+\sigma(t,x,u(t,x))\dot{\Falpha}(t,x),\\
u(0,x)=u_0(x)\quad\text{for $x\in (0,1)^d$},
\end{cases}
\end{equation}
with homogeneous Dirichlet boundary conditions. Here, $\Delta$ is the Laplacian operator on $[0,1]^d$ with Dirichlet boundary conditions, $\dot{\Falpha}$ denotes a Gaussian noise which is white in time and has a spatial correlation given by a Riesz kernel (see Section \ref{sec:noise} for the precise definition), $u_0$ is a continuous function on $[0,1]^d$ and the functions $b,\sigma$ satisfy some conditions (see Section \ref{sec:ass}).

Our main objective is to analyze the rate of strong convergence of an explicit stochastic exponential integrator for the SPDE~\eqref{probIntro}.
The main contributions of the present work are twofold:

First, we prove that the strong rate of convergence of the stochastic exponential integrator is $(\frac12-\frac\alpha4)^-$ (see Theorem~\ref{thm:main} for the precise statement). This rate of convergence is optimal, in the sense that it coincides with the time regularity of the solution $u$ of equation \eqref{probIntro} (see Proposition \ref{prop:reg}). Moreover, the obtained rate of convergence coincides with those of the explicit and implicit Euler--Maruyama schemes considered in \cite[Thm. 3.4. (ii) and (iii)]{MR2147242}. On top of that,
our error estimates are uniform with respect to time and space (see \eqref{eq:main}) and require only the minimal assumption of the initial condition $u_0$ being a continuous function. We also point out that the considered stochastic exponential integrator provides two distinct benefits over the numerical schemes studied in \cite{MR2147242}: it avoids the CFL-related step-size restriction of the explicit Euler--Maruyama scheme and it has advantages in implementation over the implicit Euler--Maruyama scheme (see Section~\ref{sec-num} for numerical illustrations).

The second important contribution of the present paper is that we numerically confirm the theoretically derived strong convergence rates in dimensions $d=1$ and $d=2$ (see Section~\ref{sec-num}). To the best of our knowledge, no previous references in the literature have shown such numerical results in dimension $2$ in the random field setting.

Before elaborating a bit more on the above described contributions, let us mention that
exponential integrators have a long history in the numerical analysis of deterministic ordinary and partial differential equations,
see the review \cite{MR2652783}. It is also not the first time that this kind of time integrators have been applied in the context of SPDEs,
see for instance \cite{MR2578878,MR2728063,MR3919602,MR4568444,MR4585358,MR4756577,MR4939608,hajiali2025multiindexmontecarlomethod} and the references above. In addition, we refer the reader to \cite{MR4050540}, where the authors consider a one-dimensional stochastic heat equation of the form~\eqref{probIntro} but driven by space-time white noise. The case of stochastic wave equations has been addressed in
\cite{cls13, MR3276429, MR3353942, MR3463447, ACLW16,MR3573128,MR4462619,MR4827939,MR4931855,MR4846352}. Furthermore, in the context of stochastic Schr\"odinger equations, there have been advances in, e.g., \cite{MR2152256,CD17,MR3771721,MR3884775,MR4744301,MR4743426,MR4830442}.

The proof of our main result (Theorem \ref{thm:main}) on the strong rate of convergence for the stochastic exponential integrator is carried out using two important ingredients. On the one hand, we apply well-known estimates for stochastic integrals with respect to our Riesz noise. Here, we adapt the theory developed by Dalang in the seminal paper \cite{Dalang99} to our setting, in which we consider our SPDE in the bounded domain $[0,1]^d$. On the other hand, most of the technical effort has been focused on proving precise estimates involving the Green function $G_d$ of the stochastic heat equation in $[0,1]^d$ with Dirichlet boundary conditions (see Lemmas~\ref{lem:GdEstimates-interpoled} and \ref{lem:aux} for the precise statements), and using them sharply throughout the proof of our main theorem. The proofs of some of those results, which involve norms of $G_d$ in the Hilbert space determined by the space covariance of the noise $\Falpha$, have been sketched in \cite{MR2147242}. For the sake of completeness, we have added a detailed proof of all estimates in Appendix \ref{app:aux}. Finally, we remark that, conveniently used, this kind of bounds allows us to get our rate of convergence uniformly with respect to time and space, with only assuming that $u_0$ is continuous.

The paper is organized as follows. In Section \ref{sec-sett}, we present the main notation that will be used throughout the paper, we describe the Riesz noise $\Falpha$ together with the corresponding stochastic integral, we provide some properties and estimates involving the Green function $G_d$ and we establish the main assumptions on the coefficients $b$ and $\sigma$. Section \ref{sec-exact} is devoted to defining the mild solution to equation \eqref{probIntro} and prove some of its properties. In Section \ref{sec-conv}, the continuous-time mild formulation of the stochastic exponential integrator is presented and the main result of the paper is stated (Theorem \ref{thm:main}). Section \ref{sec-num} is devoted to present the numerical experiments in dimensions $d=1$ and $d=2$. Finally, in Section \ref{sec-proofs} we give the proof of Theorem \ref{thm:main}.  Appendix \ref{app:aux} contains detailed proofs of the main properties of the Green function $G_d$, while Appendices \ref{app:1Dnoise} and \ref{app:2Dnoise} contain details on the implementation on the covariance matrix of the noise in dimension $d=1$ and $d=2$, respectively.

\section{Setting}\label{sec-sett}

In this section, we first introduce notation in Subsection~\ref{sec:notation}, then we provide the definition of the Riesz noise and properties of the associated stochastic integral in Subsection~\ref{sec:noise}. Next, we present properties of the heat kernel in Subsection~\ref{sec:kernel}. Finally, in Subsection~\ref{sec:ass}, we state regularity assumptions on the coefficients $b$ and $\sigma$ appearing in the SPDE~\eqref{probIntro}.

\subsection{Notation}\label{sec:notation}

The set of positive integers is denoted by $\N=\{1,2,\ldots\}$. Let $Q=(0,1)^d$ and $\overline{Q}=[0,1]^d$, where $d\in\N$ is the spatial dimension. The Euclidean norm in $\R^d$ is denoted by $|\cdot|$. Let $\mathcal{B}(Q)$ denote the set of Borel subsets of $Q$.

Let $\mathcal{C}^0(\overline{Q})$ denote the space of real-valued continuous functions on $\overline{Q}$. For any $v\in\mathcal{C}^{0}(\overline{Q})$, set
\[
\|v\|_\infty=\underset{x\in\overline{Q}}\max~|v(x)|.
\]
Moreover, let
\[
\mathcal{C}_0^0(\overline{Q})=\{v\in\mathcal{C}^0(\overline{Q});\,v(x)=0,\quad\forall~x\in\partial Q\}.
\]
Let $L^2(Q)$ denote the space of square-integrable real-valued functions on $Q$. In addition, let $\mathcal{D}(Q)$ denote the space of real-valued smooth functions defined on $Q$.

In this article, we consider the Laplace operator $\Delta$ on the $d$-dimensional domain $Q$ with homogeneous Dirichlet boundary conditions on $\partial Q=\overline{Q}\setminus Q$. It is well-known that this operator can be diagonalized. For all $\uk=(k_1,\ldots,k_d)\in\N^{d}$, define $|\uk|^2=  \displaystyle\sum_{j=1}^d |k_j|^{2} $. For all $j\in\N$, set $\varphi_j(\cdot)=\sqrt{2}\sin(j\pi\cdot)$. For all $\uk\in\N^d$, define
\[
\varphi_{\uk}(x)=\prod_{j=1}^{d}\varphi_{k_j}(x_{j}),\qquad \forall~x=(x_1,\ldots,x_d)\in\overline{Q}.
\]
Then $\bigl(\varphi_{\uk}\bigr)_{\uk\in\N^d}$ is a complete orthonormal system of $L^2(Q)$. Moreover, for any $\uk\in\N^d$, the function $\varphi_{\uk}$ is an eigenfunction of the operator $\Delta$, with associated eigenvalue $-|\uk|^2\pi^2$.

Let $(\Omega,\mathcal{F},\mathbb{P})$ be a probability space. The expectation operator is denoted by $\mathbb{E}[\cdot]$.
For any $p\in[1,\infty)$ and any real-valued random variable $u$, we define
$\vvvert u\vvvert_p=\bigl(\E[|u|^p]\bigr)^{\frac1p}\in[0,\infty]$. Let $L^p(\Omega)$ denote the space of real-valued random variables such that $\vvvert u \vvvert_p<\infty$.

In the sequel, $C$ denotes a constant that may vary from line to line. Subscripts on $C$ may be employed to indicate dependence with respect to parameters.

\subsection{Riesz noise}\label{sec:noise}

In this subsection, we introduce the Riesz noise $\Falpha$, depending on the parameter $\alpha\in(0,2 \wedge d)$,
and we provide the main tools to define stochastic integrals with respect to the noise $\Falpha$.

For all $\alpha\in(0,2\wedge d)$, define the inner product $\langle \cdot,\cdot \rangle_{\alpha}$ and the norms $\|\cdot\|_\alpha$ and $\|\cdot\|_{(\alpha)}$ as follows: for any smooth functions $\phi,\phi_1,\phi_2\in \mathcal{D}(Q)$, set
\begin{align*}
\langle \phi_1,\phi_2 \rangle_\alpha&=\iint_{Q\times Q} \phi_1(x) \phi_2(y) |x-y|^{-\alpha} \diff x \diff y,\\
\|\phi\|^2_\alpha&=\iint_{Q\times Q} \phi(x) \phi(y) |x-y|^{-\alpha} 
\diff x \diff y,\\
\norm{\phi}^2_{(\alpha)}&=\iint_{Q\times Q} |\phi(x)| |\phi(y)| |x-y|^{-\alpha}
 \diff x \diff y.
\end{align*}
Let us introduce the Hilbert space $\mathcal{H}_{\alpha}$ as the completion of the space $\mathcal{D}(Q)$ with respect to the inner product $\langle \cdot,\cdot \rangle_\alpha$ and the associated norm $\|\cdot\|_\alpha$.
Any function $g:\overline{Q}\to \mathbb{R}$ satisfying
$\norm{g}^2_{(\alpha)}<\infty$ belongs to
$\mathcal{H}_\alpha$ (see, e.g.,\cite[Prop. 2.6(a)]{MR2785545}).

Next, define the Hilbert space $\mathbb{H}_{\alpha}=L^2([0,\infty);\mathcal{H}_\alpha)$ with inner product $\langle \cdot,\cdot\rangle_{\mathbb{H}_\alpha}$ and norm $\|\cdot\|_{\mathbb{H}_\alpha}$ defined as follows: for all $\varphi,\varphi_1,\varphi_2\in\mathbb{H}_\alpha$, set
\begin{align*}
\langle \varphi_1,\varphi_2\rangle_{\mathbb{H}_{\alpha}}&=\int_0^\infty \langle \varphi_1(t,\cdot),\varphi_2(t,\cdot)\rangle_\alpha  \diff t,\\
\|\varphi\|^2_{\mathbb{H}_{\alpha}}&=\int_0^\infty \|\varphi(t,\cdot)\|^2_\alpha  \diff t.
\end{align*}

Given the parameter $\alpha\in(0,2\wedge d)$, the Riesz noise $F_\alpha$ is defined as a family $\Falpha=\{\Falpha(\varphi); \, \varphi\in \mathbb{H}_\alpha\}$ of centered Gaussian random variables, characterized by the following covariance structure: for all $\varphi_1,\varphi_2\in\mathbb{H}_\alpha$, one has
\[
\E\left[\Falpha(\varphi_1)\Falpha(\varphi_2)\right]=\langle \varphi_1,\varphi_2\rangle_{\mathbb{H}_{\alpha}}.
\]
For any time $s\geq 0$ and any Borel set $A\in\mathcal{B}(Q)$, the function $\varphi_{s,A}:(t,x)\in\R^+\times Q\mapsto \mathds{1}_{[0,s]}(t)\mathds{1}_A(x)$ belongs to $\mathbb{H}_{\alpha}$. Set
\[
\Falpha\left([0,s]\times A\right)=\Falpha\left(\varphi_{s,A}\right).
\]
For any $t\geq0$, we define $\mathcal F_t$ as the $\sigma$-algebra
\[
\mathcal{F}_t=\sigma\left(\left\{\Falpha([0,s]\times A);\,0\leq s\leq t, A\in \mathcal{B}(Q)\right\}\right).
\]
In the sequel, we consider predictable processes with respect to the filtration $\{\mathcal{F}_t\}_{t\geq 0}$.

The stochastic integral with respect to the noise $\Falpha$ is understood in the Walsh--Dalang sense
(see e.g. \cite[Chapter 2]{MR876085}, \cite[Chapter 2]{dalangSPDE} or \cite{Dalang99,MR2785545}).
Namely, if $\{X(s,y),\, s\geq 0, y\in \overline{Q}\}$ is any predictable random field
satisfying that $\E[\|X\|^2_{\mathbb{H}_\alpha}]<\infty$, then the stochastic integral $\int_0^\infty \int_Q X(s,y)\Falpha(\diff s,\diff y)$ is a well-defined centered random variable, which satisfies the following isometry property:
\begin{equation}\label{eq:iso1}
 \E\left[\left|\int_0^\infty\int_Q X(s,y)\Falpha(\diff s,\diff y)\right|^2\right]=
 \E[\|X\|^2_{\mathbb{H}_\alpha}].
\end{equation}
If $X$, in addition, is such that $\E\left[ \int_0^\infty \|X(s,\cdot)\|^2_{(\alpha)} \diff s\right]<\infty$, then one has
\begin{equation}\label{eq:iso2}
 \E\left[\left|\int_0^\infty\int_Q X(s,y)\Falpha(\diff s,\diff y)\right|^2\right]\leq \E\left[ \int_0^\infty \|X(s,\cdot)\|^2_{(\alpha)} \diff s\right].
\end{equation}
For all $\varphi\in \mathbb{H}_\alpha$, one has
\[
\int_0^\infty\int_Q \varphi(s,y)\Falpha(\diff s,\diff y)=F_\alpha(\varphi).
\]
Finally, let us recall the Burkholder--Davis--Gundy inequality, see e.g. \cite[Appendix B]{MR3222416}: for all $p \geq 1$ and $T \in (0,\infty)$, there exists $C_{p}(T)\in(0,\infty)$ such that for any predictable process $X$ such that $\int_0^T \|X(s,\cdot)\|^2_{(\alpha)} \diff s\in L^p(\Omega)$, the stochastic integral satisfies $\int_0^T\int_Q X(s,y)\Falpha(\diff s,\diff y)\in L^{2p}(\Omega)$, and one has
\begin{equation}\label{bdg}
\E\left[\left|\int_0^T\int_Q X(s,y)\Falpha(\diff s,\diff y)\right|^{2p}\right]\leq C_{p}(T)\E\left[ \left(\int_0^T \|X(s,\cdot)\|^2_{(\alpha)} \diff s \right)^p\right],
\end{equation}
which can be rewritten as
\[
\vvvert \int_0^T\int_Q X(s,y)\Falpha(\diff s,\diff y) \vvvert_{2p}^{2p}\le C_p(T)\vvvert \int_0^T \|X(s,\cdot)\|^2_{(\alpha)} \diff s \vvvert_{p}^p.
\]

\begin{remark}
The Riesz noise $F_\alpha$ is indeed well-defined for all $\alpha\in (0,d)$. The restriction $\alpha<2$ is due to the fact that we want the following stochastic convolution to be well-defined, for all $T>0$:
\[
 \int_0^t\int_Q G_d(t-s,x,y)\Falpha(\diff s,\diff y),\; (t,x)\in [0,T]\times \overline{Q},
\]
where $G_d$ is the heat kernel introduced in the next Section \ref{sec:kernel}. More precisely, condition $\alpha <2$ is used to prove that
the function $(s,y)\in [0,t]\times \overline{O} \mapsto G_d(t-s,x,y)$ belongs to $\mathbb{H}_\alpha$ (see \eqref{eq:lem-aux1} in Lemma  \ref{lem:aux}). This is important in order to define the mild solution to equation \eqref{probIntro} (see \eqref{mild}).
\end{remark}

\subsection{Heat kernel}\label{sec:kernel}
In this subsection, we introduce the heat kernel $G_d$ on $Q$ with Dirichlet homogeneous boundary conditions, and provide several of its fundamental properties. For all $t\in(0,\infty)$ and all $x,y\in \overline{Q}$, set
\[\label{green}
G_d(t,x,y)=\sum_{\uk\in\N^d}\exp(-|\uk|^2\pi^2t)\varphi_{\uk}(x)\varphi_{\uk}(y),
\]
where for all $\uk\in\N^d$, $|\uk|^2$ and $\varphi_{\uk}$ are defined in Subsection~\ref{sec:notation}.

Let us first recall standard properties of the heat kernel $G_d$. First, one has
\begin{equation}\label{eq:Gdpos}
G_d(t,x,y)\ge 0,\qquad \forall t\in(0,\infty),~x,y\in \overline{Q}.
\end{equation}
Moreover, for all $t\in(0,\infty)$ and $x\in\overline{Q}$, one has
\begin{equation}\label{eq:Gdint}
\int_Q G_d(t,x,y) \diff y\in[0,1].
\end{equation}

Next, let us provide estimates for the heat kernel and its first order spatial and temporal derivatives.

\begin{lemma}\label{lem:GdEstimates}
The heat kernel $G_d$, its spatial gradient $\nabla_{x} G_{d}$ and its temporal derivative $\partial_{t} G_{d}$ satisfy the following upper bounds:
there exist $c_{d},C_{d}\in(0,\infty)$ such that for all $(t,x,y) \in (0,\infty) \times \overline{Q} \times \overline{Q}$, one has
\begin{align}
G_d(t,x,y)&\le C_dt^{-\frac{d}{2}}e^{-c_d\frac{|y-x|^2}{t}},\label{eq:GdEstimates-G}\\
|\nabla_{x} G_d(t,x,y)|&\le C_dt^{-\frac{d+1}{2}}e^{-c_d\frac{|y-x|^2}{t}},\label{eq:GdEstimates-dxG}\\
|\partial_tG_d(t,x,y)|&\le C_dt^{-\frac{d+2}{2}}e^{-c_d\frac{|y-x|^2}{t}}.\label{eq:GdEstimates-dtG}
\end{align}
\end{lemma}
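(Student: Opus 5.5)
The plan is to reduce everything to one dimension using the product structure. Since $\varphi_{\uk}(x)=\prod_j\varphi_{k_j}(x_j)$ and $|\uk|^2=\sum_j k_j^2$, the series factorizes:
\[
G_d(t,x,y)=\prod_{j=1}^d G_1(t,x_j,y_j).
\]
Hence $\nabla_x G_d$ has components $\partial_{x_j}G_1(t,x_j,y_j)\prod_{i\neq j}G_1(t,x_i,y_i)$. Likewise $\partial_t G_d=\sum_j \partial_tG_1(t,x_j,y_j)\prod_{i\ne j}G_1(t,x_i,y_i)$. Because $e^{-c|y-x|^2/t}=\prod_j e^{-c|y_j-x_j|^2/t}$, all three bounds \eqref{eq:GdEstimates-G}, \eqref{eq:GdEstimates-dxG} and \eqref{eq:GdEstimates-dtG} follow from their one-dimensional versions. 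Concretely, I must show for $x,y\in[0,1]$ and $t>0$:
\[
|\partial_x^m\partial_t^n G_1(t,x,y)|\le C t^{-\frac{1+m+2n}{2}}e^{-c\frac{|x-y|^2}{t}},\qquad (m,n)\in\{(0,0),(1,0),(0,1)\}.
\]
Multiplying these gives exactly the powers $t^{-d/2}$, $t^{-(d+1)/2}$ and $t^{-(d+2)/2}$.

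For the one-dimensional estimates I would use the method of images. Writing $\varphi_k(x)\varphi_k(y)=\cos(k\pi(x-y))-\cos(k\pi(x+y))$ and applying Poisson summation gives
\[
G_1(t,x,y)=\sum_{n\in\mathbb Z}\bigl[p_t(x-y+2n)-p_t(x+y+2n)\bigr],\qquad p_t(z)=(4\pi t)^{-1/2}e^{-z^2/(4t)}.
\]
This expansion is valid for all $t>0$ with absolutely convergent derivatives.

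\textbf{Small times ($t\le1$).} For the first term I use $|x-y+2n|\ge |x-y|$. For $n\ne0$, I use $|x-y+2n|\ge 2|n|-1\ge |n|$ together with $|x-y|\le1$. For the reflected terms, $|x+y+2n|\ge |x-y|$ for all $n$, since $x+y\ge|x-y|$ and $|x+y-2|\ge |x-y|$ when $x,y\in[0,1]$, and the terms grow linearly in $|n|$ otherwise. The derivatives satisfy $|p_t'(z)|\le Ct^{-1}e^{-z^2/(8t)}$ and $|\partial_tp_t(z)|\le Ct^{-3/2}e^{-z^2/(8t)}$, using $|z|^k t^{-k/2}e^{-z^2/(8t)}\le C_k$. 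Each term is therefore bounded by $Ct^{-(1+m+2n)/2}e^{-|x-y|^2/(16t)}e^{-n^2/(16t)}$. The sum $\sum_n e^{-n^2/(16t)}$ is bounded uniformly for $t\le1$, which gives the claim with $c=1/16$.

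\textbf{Large times ($t\ge1$).} Here I use the eigenfunction series directly. The term $|\partial_x^m\partial_t^n(\cdot)|$ is at most $C\sum_k k^{m+2n}e^{-k^2\pi^2 t}\le Ce^{-\pi^2 t}$. This is in turn at most $Ct^{-(1+m+2n)/2}e^{-c|x-y|^2/t}$, since $|x-y|^2/t\le 1$ and $e^{-\pi^2t}t^{3/2}$ is bounded.

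The main obstacle is the bookkeeping of the image sum. One must make sure the reflected images $x+y+2n$ never come closer than $|x-y|$, and that the tail over $n$ is summable uniformly with a Gaussian factor in $|x-y|$. Replacing $4$ by $8$ and then $16$ in the exponent, to absorb the polynomial factors and the factor $n^2$, handles both issues.
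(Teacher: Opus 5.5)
Your proof is correct, and it takes a genuinely different route from the paper. The paper gives no argument at all for this lemma; it simply cites the general Green function estimates of Ladyzhenskaya--Solonnikov--Ural'tseva for parabolic boundary value problems.

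You exploit the product geometry of the cube instead. The factorization $G_d(t,x,y)=\prod_{j=1}^d G_1(t,x_j,y_j)$ reduces everything to $d=1$. There, the image expansion gives the small-time Gaussian bounds, and the spectral gap $e^{-\pi^2 t}$ gives the large-time bounds. The powers $t^{-d/2}$, $t^{-(d+1)/2}$, $t^{-(d+2)/2}$ then come out of the product and the product rule exactly as you say.

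The two approaches buy different things. The citation is shorter and covers general operators and domains, but results of that kind are typically stated for domains with smooth boundary, which the cube is not when $d\ge 2$. Your argument is self-contained, applies directly to $\overline{Q}$, and even produces an explicit constant $c_d=1/16$.

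There is one small bookkeeping slip. You claim that \emph{every} image term is bounded by $Ct^{-(1+m+2n)/2}e^{-|x-y|^2/(16t)}e^{-n^2/(16t)}$. This fails for the reflected term $p_t(x+y-2)$. In the case of $G_1$ itself, at $x=y=1$ that term equals $(4\pi t)^{-1/2}$, which $Ct^{-1/2}e^{-1/(16t)}$ cannot dominate as $t\to 0$. The slip is harmless. Bound the two reflected terms with indices $0$ and $-1$ separately by $Ct^{-(1+m+2n)/2}e^{-|x-y|^2/(8t)}$, using $x+y\ge|x-y|$ and $|x+y-2|\ge|x-y|$. Only the remaining terms need the summable factor $e^{-n^2/(16t)}$.

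You should also rename the image summation index, since $n$ already denotes the order of the time derivative and both appear in the same displayed bound.
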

We refer for instance to~\cite[page $274$ in Chapter 4]{Ladyenskaja1968LinearAQ} for a proof of Lemma~\ref{lem:GdEstimates}.
Combining \eqref{eq:GdEstimates-G} and \eqref{eq:GdEstimates-dtG} yields the following inequality.
\begin{lemma}\label{lem:GdEstimates-interpoled}
There exists $C_d\in(0,\infty)$ such that for all $t_1,t_2\in(0,\infty)$ such that $t_1<t_2$, and all $x,y\in \overline{Q}$, one has
\begin{equation}\label{eq:GdEstimates-interpoled-strong}
\begin{aligned}
\big|G_d(t_2,x,y)-G_d(t_1,x,y)\big|&\le C_d \mathds{1}_{\frac{t_2-t_1}{t_1}< 1}\frac{t_2-t_1}{t_1}t_1^{-\frac{d}{2}}e^{-c_d\frac{|y-x|^2}{2t_1}}\\
&+C_d \mathds{1}_{\frac{t_2-t_1}{t_1}\ge 1}\Bigl(t_1^{-\frac{d}{2}}e^{-c_d\frac{|y-x|^2}{2t_1}}+t_2^{-\frac{d}{2}}e^{-c_d\frac{|y-x|^2}{2t_2}}\Bigr).
\end{aligned}
\end{equation}
In addition, for all $\gamma\in[0,1]$, for all $t_1,t_2\in(0,\infty)$ such that $t_1<t_2$, and all $x,y\in \overline{Q}$, one has
\begin{equation}\label{eq:GdEstimates-interpoled}
\big|G_d(t_2,x,y)-G_d(t_1,x,y)\big|\le C_{d}\frac{|t_2-t_1|^\gamma}{t_1^\gamma}\Bigl(t_1^{-\frac{d}{2}}e^{-c_d\frac{|y-x|^2}{2t_1}}+t_2^{-\frac{d}{2}}e^{-c_d\frac{|y-x|^2}{2t_2}}\Bigr).
\end{equation}
\end{lemma}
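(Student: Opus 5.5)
The plan is to split according to whether $t_2-t_1<t_1$ or $t_2-t_1\ge t_1$, using \eqref{eq:GdEstimates-dtG} in the first case and \eqref{eq:GdEstimates-G} in the second. Once \eqref{eq:GdEstimates-interpoled-strong} is proved, \eqref{eq:GdEstimates-interpoled} will follow by interpolating between the two regimes.

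\emph{Case $\frac{t_2-t_1}{t_1}<1$}, so that $t_1<t_2<2t_1$. By the mean value theorem in time and \eqref{eq:GdEstimates-dtG},
\[
|G_d(t_2,x,y)-G_d(t_1,x,y)|\le (t_2-t_1)\sup_{s\in[t_1,t_2]}C_d s^{-\frac{d+2}{2}}e^{-c_d\frac{|y-x|^2}{s}}.
\]
For $s\in[t_1,2t_1]$ we have $s^{-\frac{d+2}{2}}\le t_1^{-\frac{d+2}{2}}$. We also have $e^{-c_d|y-x|^2/s}\le e^{-c_d|y-x|^2/(2t_1)}$, because $s\le 2t_1$. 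This gives the bound $C_d\frac{t_2-t_1}{t_1}t_1^{-\frac d2}e^{-c_d\frac{|y-x|^2}{2t_1}}$. The only point needing care is this halving of the Gaussian constant, which is exactly why the factor $2t_1$ appears in the statement.

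\emph{Case $\frac{t_2-t_1}{t_1}\ge1$.} Here we use the triangle inequality together with \eqref{eq:GdEstimates-G} at $t_1$ and at $t_2$. We then weaken $e^{-c_d|y-x|^2/t_i}\le e^{-c_d|y-x|^2/(2t_i)}$. Combining the two cases with indicators yields \eqref{eq:GdEstimates-interpoled-strong}.

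\emph{Deriving \eqref{eq:GdEstimates-interpoled}.} Fix $\gamma\in[0,1]$ and set $r=\frac{t_2-t_1}{t_1}$. If $r<1$, then $r\le r^\gamma$, so the first term of \eqref{eq:GdEstimates-interpoled-strong} is bounded by $C_d r^\gamma t_1^{-\frac d2}e^{-c_d\frac{|y-x|^2}{2t_1}}$. If $r\ge1$, then $1\le r^\gamma$, so the second term is bounded by $r^\gamma$ times the sum. In both cases the right-hand side is at most $C_d r^\gamma$ multiplied by the sum of the two Gaussian terms, which is the claim.

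I expect no real obstacle. The main subtlety is making sure that a single constant $c_d$ (the one from Lemma~\ref{lem:GdEstimates}) works uniformly, which the monotonicity arguments above provide.
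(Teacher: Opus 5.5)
Your proof is correct and essentially matches the paper's argument. You split at $t_2=2t_1$. In the first regime you use \eqref{eq:GdEstimates-dtG}; the paper integrates $\partial_t G_d$ over $[t_1,t_2]$ where you invoke the mean value theorem. In the second regime you use the triangle inequality with \eqref{eq:GdEstimates-G}, and then deduce \eqref{eq:GdEstimates-interpoled} from $\min(r,1)\le r^\gamma$. Two points of yours are slightly more careful than the paper's write-up: you explicitly weaken $e^{-c_d|y-x|^2/t_i}\le e^{-c_d|y-x|^2/(2t_i)}$ in the second case, and you handle the endpoints $\gamma\in\{0,1\}$.
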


Let us state additional properties of the heat kernel $G_d$, which are not standard since they are concerned with the norms $\|\cdot\|_{(\alpha)}$. Some of the estimates below can be found in \cite[Appendix~A]{MR2147242}. For the sake of completeness, we add the whole proof of the lemma below in Appendix \ref{app:aux}.

\begin{lemma}\label{lem:aux}
Let $d\in\N$ and $\alpha\in(0,2\wedge d)$.
There exists $C_\alpha\in(0,\infty)$ such that the following inequalities hold:
\begin{align}
&\underset{x\in \overline{Q}}\sup~\|G_d(t,x,\cdot)\|_{(\alpha)}^2\le C_\alpha t^{-\frac{\alpha}{2}}, \quad \forall~t\in(0,\infty),\label{eq:lem-aux1}\\
&\underset{x\in\overline{Q} }\sup~\int_{t_1}^{t_2}\|G_d(t_2-t,x,\cdot)\|_{(\alpha)}^2 \diff t\le C_\alpha(T)|t_2-t_1|^{1-\frac{\alpha}{2}}, \quad \forall~t_2\ge t_1\ge 0,\label{eq:lem-aux2}\\
&\int_{0}^{+\infty}\|G_d(t,x_2,\cdot)-G_d(t,x_1,\cdot)\|_{(\alpha)}^{2}dt\le C_\alpha|x_2-x_1|^{2-\alpha},\quad \forall~x_1,x_2\in \overline{Q}.\label{eq:lem-aux3}
\end{align}

Moreover, for all $\gamma\in(0,\frac12-\frac{\alpha}{4})$, there exists $C_{\alpha,\gamma}\in(0,\infty)$ such that
\begin{equation}\label{eq:lem-aux4}
\underset{x\in \overline{Q} }\sup~\int_{0}^{t_1}\|G_d(t_2-t,x,\cdot)-G_d(t_1-t,x,\cdot)\|_{(\alpha)}^{2} \diff t\le C_{\alpha,\gamma} t_1^{1-\frac{\alpha}{2}-2\gamma}|t_2-t_1|^{2\gamma},\quad \forall~t_2\ge t_1\ge 0.
\end{equation}
\end{lemma}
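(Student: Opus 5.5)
The common tool for all four estimates is a scaling bound for Gaussian-type kernels. For $s>0$ and $x\in\overline{Q}$ set $g_s(x,y)=s^{-\frac d2}e^{-c\frac{|y-x|^2}{s}}$. Extending the integration to $\R^d\times\R^d$ and changing variables $y=x+\sqrt{s}\,u$, $z=x+\sqrt{s}\,v$ gives
\[
\iint_{Q\times Q} g_s(x,y)g_s(x,z)|y-z|^{-\alpha}\diff y\diff z\le s^{-\frac\alpha2}\iint_{\R^{2d}}e^{-c|u|^2-c|v|^2}|u-v|^{-\alpha}\diff u\diff v .
\]
The right-hand integral is finite because $\alpha<d$. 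So $\|g_s(x,\cdot)\|_{(\alpha)}^2\le C s^{-\alpha/2}$ uniformly in $x$.

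For \eqref{eq:lem-aux1}, combine this bound with \eqref{eq:GdEstimates-G}. For \eqref{eq:lem-aux2}, integrate \eqref{eq:lem-aux1} over $t\in(t_1,t_2)$: $\int_0^{t_2-t_1}s^{-\alpha/2}\diff s=C|t_2-t_1|^{1-\alpha/2}$, and this integral is finite since $\alpha<2$.

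For \eqref{eq:lem-aux4}, set $h=t_2-t_1$ and $s=t_1-t$, and apply \eqref{eq:GdEstimates-interpoled} with $t_1\to s$, $t_2\to s+h$, with exponent $\gamma$. Since $(a+b)^2\le 2a^2+2b^2$ holds for the $\|\cdot\|_{(\alpha)}$-type quadratic form of a pointwise sum of nonnegative kernels, this gives
\[
\|G_d(s+h,x,\cdot)-G_d(s,x,\cdot)\|_{(\alpha)}^2\le C h^{2\gamma}s^{-2\gamma}\bigl(s^{-\frac\alpha2}+(s+h)^{-\frac\alpha2}\bigr)\le C h^{2\gamma}s^{-2\gamma-\frac\alpha2}.
\]
Integrating over $s\in(0,t_1)$ converges precisely when $2\gamma+\alpha/2<1$, i.e. $\gamma<\frac12-\frac\alpha4$, and yields $C t_1^{1-\frac\alpha2-2\gamma}h^{2\gamma}$ as claimed. (The monotonicity in $|\cdot|$ of $\|\cdot\|_{(\alpha)}$ lets us bound the absolute value pointwise.)

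The spatial estimate \eqref{eq:lem-aux3} is the main obstacle, because the $(\alpha)$-norm involves absolute values, so no Fourier or Parseval cancellation is available. Let $\delta=|x_2-x_1|$ and split the time integral at $t=\delta^2$.

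On $(0,\delta^2)$, bound the difference by the sum of the two kernels. Using \eqref{eq:lem-aux1} again, this part contributes at most $C\int_0^{\delta^2}t^{-\alpha/2}\diff t=C\delta^{2-\alpha}$.

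On $(\delta^2,\infty)$, use the mean value theorem with \eqref{eq:GdEstimates-dxG}:
\[
|G_d(t,x_2,y)-G_d(t,x_1,y)|\le C\delta\, t^{-\frac{d+1}{2}}\sup_{\theta\in[0,1]}e^{-c\frac{|y-x_\theta|^2}{t}},
\qquad x_\theta=x_1+\theta(x_2-x_1).
\]
Because $|x_\theta-x_1|\le\delta\le\sqrt t$, the supremum is bounded by $Ce^{-c'|y-x_1|^2/t}$. Hence the integrand is at most $C\delta^2 t^{-1-\alpha/2}$. Integrating over $t>\delta^2$ gives $C\delta^2(\delta^2)^{-\alpha/2}=C\delta^{2-\alpha}$. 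For large times, the bound $t^{-1-\alpha/2}$ already makes the integral converge, so no Dirichlet decay is needed. The one point to check carefully is the domination of the supremum over $\theta$ by a single Gaussian centered at $x_1$; this follows from $|y-x_\theta|^2\ge\frac12|y-x_1|^2-\delta^2$ together with $\delta^2\le t$.
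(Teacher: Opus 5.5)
Your proof is correct and follows essentially the paper's route. The common ingredient is the Gaussian scaling bound $\|g_s(x,\cdot)\|_{(\alpha)}^2\le Cs^{-\alpha/2}$, which you obtain by the direct substitution $y=x+\sqrt{s}\,u$, $z=x+\sqrt{s}\,v$ instead of the paper's splitting into the regions $|x-y|\ge|y-z|$ and $|x-y|\le|y-z|$. The spatial integral is split at $t=|x_2-x_1|^2$, with the mean value theorem and \eqref{eq:GdEstimates-dxG} on the long-time part, and \eqref{eq:lem-aux4} comes from integrating the interpolated bound \eqref{eq:GdEstimates-interpoled} under $2\gamma+\frac{\alpha}{2}<1$.

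In that last step your version is actually the cleaner one. You keep the two Gaussians at scales $s$ and $s+h$, using $\|a+b\|_{(\alpha)}^2\le 2\|a\|_{(\alpha)}^2+2\|b\|_{(\alpha)}^2$ for nonnegative $a,b$, which holds by positive definiteness of the Riesz kernel. The paper instead bounds $G_d(t_2-t,x,\cdot)$ pointwise by a single Gaussian at scale $t_1-t$, which fails when $t_2-t\gg t_1-t$. You also state the correct condition $\gamma<\frac12-\frac{\alpha}{4}$.
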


Let us finally provide a technical lemma on estimates for stochastic integrals of the heat kernel $G_d$ with respect to the noise $\Falpha$.

\begin{lemma}\label{lem:tech}
Let $d\in\N$, $\alpha\in(0,2\wedge d)$, $T\in(0,\infty)$, and $p\in[1,\infty)$. There exists $C_{p,\alpha}(T)\in(0,\infty)$ such that the following holds: if $\kappa\colon\mathbb R_+\to \mathbb R_+$ is a mapping such that $\kappa(s)\leq s$ for all $s\in\mathbb R_+$, and if $\{X(s,x),\, (s,x)\in [0,T]\times \overline{Q}\}$ is any square-integrable predictable process taking values in $\mathbb{H}_\alpha$, then one has the upper bounds
\begin{align}
\underset{0\le t\le T}\sup~\underset{x\in \overline{Q}}\sup~\vvvert & \int_{0}^{t}\int_Q G_d(t-\kappa(s),x,y)X(s,y)\Falpha(\diff s,\diff y)\vvvert_{2p}\nonumber\\
&\le C_{p,\alpha}(T)\left(\int_0^t (t-\kappa(s))^{-\frac{\alpha}{2}}\underset{x\in \overline{Q}}\sup~\vvvert X(s,x)\vvvert_{2p}^2\diff s\right)^{\frac12}\label{eq:lem-tech1}\\
&\le C_{p,\alpha}(T)\left(\int_0^t (t-\kappa(s))^{-\frac{\alpha}{2}}\underset{x\in \overline{Q}}\sup~\E[|X(s,x)|^{2p}]\diff s\right)^{\frac1{2p}}\label{eq:lem-tech2}\\
&\le C_{p,\alpha}(T)\underset{0\le t\le T}\sup~\underset{x\in \overline{Q}}\sup~\vvvert X(t,x)\vvvert_{2p}\label{eq:lem-tech3}.
\end{align}
\end{lemma}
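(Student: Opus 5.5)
We prove the three inequalities in turn; each follows from the previous one by an elementary step. Fix $t\in[0,T]$ and $x\in\overline{Q}$.

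\emph{Step 1: inequality \eqref{eq:lem-tech1}.} Apply the Burkholder--Davis--Gundy inequality \eqref{bdg} to the predictable integrand $(s,y)\mapsto G_d(t-\kappa(s),x,y)X(s,y)\mathds{1}_{[0,t]}(s)$. This gives
\[
\vvvert \int_0^t\int_Q G_d(t-\kappa(s),x,y)X(s,y)\Falpha(\diff s,\diff y)\vvvert_{2p}^{2}\le C_p(T)^{\frac1p}\vvvert \int_0^t \|G_d(t-\kappa(s),x,\cdot)X(s,\cdot)\|_{(\alpha)}^2\diff s\vvvert_p .
\]
Writing out the $\|\cdot\|_{(\alpha)}$ norm, the integrand in $s$ is
\[
\iint_{Q\times Q} G_d(t-\kappa(s),x,y)G_d(t-\kappa(s),x,z)\,|X(s,y)||X(s,z)|\,|y-z|^{-\alpha}\diff y\diff z,
\]
where we used the positivity \eqref{eq:Gdpos}. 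We then apply Minkowski's inequality in $L^p(\Omega)$ to move $\vvvert\cdot\vvvert_p$ inside the $\diff s\,\diff y\,\diff z$ integral. H\"older's inequality gives
\[
\vvvert |X(s,y)||X(s,z)|\vvvert_p\le \vvvert X(s,y)\vvvert_{2p}\vvvert X(s,z)\vvvert_{2p}\le \sup_{x'}\vvvert X(s,x')\vvvert_{2p}^2 .
\]
The remaining deterministic factor is $\|G_d(t-\kappa(s),x,\cdot)\|_{(\alpha)}^2$, which by \eqref{eq:lem-aux1} is at most $C_\alpha (t-\kappa(s))^{-\alpha/2}$. Taking square roots and the supremum over $t$ and $x$ yields \eqref{eq:lem-tech1}.

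The only technical care needed in this step is to justify the interchange of expectation and integrals and the use of \eqref{bdg}. If the right-hand side of \eqref{eq:lem-tech1} is infinite there is nothing to prove. Otherwise the above computation shows that the integrability hypothesis of \eqref{bdg} holds, so the argument is consistent.

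\emph{Step 2: inequality \eqref{eq:lem-tech2}.} We apply Jensen's (or H\"older's) inequality with respect to the finite measure $\mu(\diff s)=(t-\kappa(s))^{-\alpha/2}\diff s$ on $[0,t]$, to the function $s\mapsto\sup_x\vvvert X(s,x)\vvvert_{2p}^2$ with exponent $p$. This gives
\[
\int f\,\diff\mu\le \mu([0,t])^{1-\frac1p}\Bigl(\int f^p\diff\mu\Bigr)^{\frac1p}.
\]
The total mass is bounded using $\kappa(s)\le s$: since $t-\kappa(s)\ge t-s$ and $\alpha<2$,
\[
\mu([0,t])\le \int_0^t (t-s)^{-\alpha/2}\diff s\le C T^{1-\alpha/2}.
\]
Finally, $f(s)^p=\sup_x \E[|X(s,x)|^{2p}]$. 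Raising to the power $\frac12$ turns the exponent $\frac1p$ into $\frac1{2p}$, which gives \eqref{eq:lem-tech2}.

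\emph{Step 3: inequality \eqref{eq:lem-tech3}.} Bound $\sup_x \E[|X(s,x)|^{2p}]$ by its supremum over $s\in[0,T]$ and reuse the bound $\mu([0,t])\le CT^{1-\alpha/2}$ from Step~2. This gives \eqref{eq:lem-tech3}.

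As noted in Step~1, that step is the only one requiring care; Steps 2 and 3 are routine.
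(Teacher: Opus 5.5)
Your proposal is correct and follows the paper's proof essentially step for step. The paper also proceeds by the Burkholder--Davis--Gundy inequality \eqref{bdg}, Minkowski's inequality in $L^p(\Omega)$, Cauchy--Schwarz on $X(s,y)X(s,z)$ and the kernel bound \eqref{eq:lem-aux1}; it then applies Jensen with the weight $(t-\kappa(s))^{-\alpha/2}$, whose mass is bounded via $\kappa(s)\le s$, and finally takes the supremum. Your remark checking the integrability hypothesis of \eqref{bdg} is a small extra the paper leaves implicit.
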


The proofs of Lemma~\ref{lem:GdEstimates-interpoled}, of Lemma~\ref{lem:aux} and of Lemma~\ref{lem:tech} are postponed to Appendix~\ref{app:aux}.

\subsection{Regularity assumptions}\label{sec:ass}

We conclude this section by giving the assumptions on the initial value $u_0$ and on the coefficients $b$ and $\sigma$ which appear in
the stochastic evolution equation.

\begin{assumption}\label{ass:init}
The initial value $u_0$ is a function such that $u_0\in\mathcal{C}^{0}_{0}(\overline{Q})$. In other words, it is assumed that $u_0\colon \overline{Q}\to\R$ is continuous, and that $u_0(x)=0$ for all $x\in\partial Q=\overline{Q}\setminus Q$.
\end{assumption}
Note that in this article the initial value $u_0$ is assumed to be deterministic. Considering $\mathcal{F}_0$-measurable random initial values, independent of the Riesz noise $F_\alpha$, would be possible up to minor standard modifications.

Let $T\in(0,\infty)$ denote a fixed time. The mappings $b$ and $\sigma$ are assumed to satisfy the following conditions.
\begin{assumption}\label{ass:coeff}
The mappings $b\colon[0,T]\times
\overline{Q}\times\R\to\R$ and $\sigma\colon[0,T]\times \overline{Q}\times\R\to\R$ satisfy the following assumptions: there exist $C\in(0,\infty)$ such that,
for all $s,t\in[0,T]$, $x,y\in \overline{Q}$, $u,v\in\R$, one has
\begin{align}\label{L}
|b(s,x,u)-b(t,y,v)|+|\sigma(s,x,u)-\sigma(t,y,v)|\leq
C\bigl(|s-t|^{1/2-\alpha/4}+|x-y|^{1-\alpha/2}+|u-v|\bigr),\tag{L}
\end{align}
and such that, for all $t\in[0,T]$, $x\in \overline{Q}$, $u\in\R$, one has
\begin{align}\label{LG}
|b(t,x,u)|+|\sigma(t,x,u)|\leq C(1+|u|).\tag{LG}
\end{align}
\end{assumption}

Observe that the condition~\eqref{L} means that the two mappings $u\in\R\mapsto b(t,x,u)$ and $u\in\R\mapsto \sigma(t,x,u)$ are globally Lipschitz continuous, uniformly with respect to $t\in[0,T]$ and $x\in \overline{Q}$. Moreover, the temporal and spatial H\"older regularity exponents
$1/2-\alpha/4$ and $1-\alpha/2$ in the condition~\eqref{L} are related to the optimal temporal and spatial regularity properties of the solutions, see Section~\ref{sec-exact} and e.g. \cite[Prop. 3.2]{MR2147242}.

\section{Well-posedness and properties of the SPDE driven by Riesz noise}\label{sec-exact}

The objective of this section is to study the stochastic partial differential equation~\eqref{probIntro} driven by  Riesz noise in dimension $d$, employing the tools presented in Section~\ref{sec-sett}. Proposition~\ref{prop:uExistence} provides existence and uniqueness of a mild solution as well as moment bounds. Proposition~\ref{prop:reg} provides a temporal regularity result.

We consider the stochastic partial differential equation
\begin{equation}\label{prob}
\left\lbrace
\begin{aligned}
&\frac{\partial u}{\partial t}(t,x)=\Delta u(t,x)+b(t,x,u(t,x))+\sigma(t,x,u(t,x))\frac{\partial^2\Falpha}{\partial t\partial x}(t,x),\quad \forall~t>0,x\in Q,\\
&u(t,x)=0,\quad \forall~x\in\partial Q,\\
&u(0,x)=u_0(x),\quad \forall~x\in \overline{Q},
\end{aligned}
\right.
\end{equation}
where the initial value $u_{0}$ satisfies Assumption~\ref{ass:init} and the mappings $b$ and $\sigma$ satisfy Assumption~\ref{ass:coeff}. The formulation~\eqref{prob} is formal. We recall that a mild solution to~\eqref{prob} is an adapted process $\{u(t,x),\,
(t,x)\in [0,T]\times \overline{Q}\}$ which satisfies the following identity: almost surely, for any $(t,x)\in (0,T]\times \overline{Q}$, one has
\begin{align}\label{mild}
u(t,x)&=\int_QG_d(t,x,y)u_0(y)\diff y+\int_0^t\int_QG_d(t-s,x,y)b(s,y,u(s,y))\diff y \diff s\nonumber\\
&\quad+\int_0^t\int_QG_d(t-s,x,y)\sigma(s,y,u(s,y))\Falpha(\diff s,\diff
y).
\end{align}
The stochastic integral with respect to the Riesz noise $\Falpha(\diff s,\diff y)$ appearing in~\eqref{mild} is understood in the Walsh--Dalang sense, as presented in Subsection~\ref{sec:noise}.

Let us state properties of mild solutions of the parabolic SPDE~\eqref{prob}.
\begin{proposition}\label{prop:uExistence}
Let Assumptions~\ref{ass:init} and~\ref{ass:coeff} be satisfied. For all $T\in(0,\infty)$, there exists a unique global mild solution $\{u(t,x),\,(t,x)\in [0,T]\times \overline{Q}\}$ to~\eqref{prob}.

In addition, the mild solution satisfies the following moment bounds: for all $p\in[1,\infty)$ and all $T\in(0,\infty)$, there exists $C_{p}(T)\in(0,\infty)$ such that one has
\begin{equation}\label{eq:momo-u}
\underset{t\in[0,T]}\sup~\underset{x\in \overline{Q}}\sup~\E[|u(t,x)|^{2p}]\le C_p(T)\bigl(1+\|u_0\|_\infty^{2p}\bigr).
\end{equation}
\end{proposition}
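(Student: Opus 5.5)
The plan is a Picard iteration (equivalently, a Banach fixed point argument) in the space $\mathcal{B}_{p,T}$ of predictable random fields $v$ on $[0,T]\times\overline{Q}$ with
\[
\|v\|_{p,T}=\sup_{t\in[0,T]}\sup_{x\in\overline{Q}}\vvvert v(t,x)\vvvert_{2p}<\infty .
\]
Set $u^{0}(t,x)=\int_Q G_d(t,x,y)u_0(y)\diff y$ and define $u^{n+1}=\Phi(u^n)$, where $\Phi(v)(t,x)$ is the right-hand side of \eqref{mild} with $u$ replaced by $v$. By \eqref{eq:Gdpos}--\eqref{eq:Gdint}, $|u^0(t,x)|\le\|u_0\|_\infty$.

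\textbf{Step 1: $\Phi$ maps $\mathcal{B}_{p,T}$ into itself.}

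The drift term is handled with Minkowski's inequality, \eqref{eq:Gdpos}--\eqref{eq:Gdint} and \eqref{LG}. This gives
\[
\vvvert \textstyle\int_0^t\int_Q G_d(t-s,x,y)\,b(s,y,v(s,y))\diff y\diff s\vvvert_{2p}\le C\int_0^t\bigl(1+\sup_y\vvvert v(s,y)\vvvert_{2p}\bigr)\diff s.
\]
The stochastic term is handled with Lemma~\ref{lem:tech} (taking $\kappa(s)=s$) applied to $X(s,y)=\sigma(s,y,v(s,y))$. Together with \eqref{LG}, estimate \eqref{eq:lem-tech1} gives a bound
\[
C\Bigl(\int_0^t (t-s)^{-\alpha/2}\bigl(1+\sup_y\vvvert v(s,y)\vvvert_{2p}^2\bigr)\diff s\Bigr)^{1/2},
\]
which is finite because $\alpha<2$.

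We must also check that $X$ is predictable and that $(s,y)\mapsto G_d(t-s,x,y)X(s,y)$ is in $\mathbb{H}_\alpha$ with the required integrability. The first point follows by approximating $v$ by simple processes, as in Dalang's framework. The second follows from \eqref{eq:lem-aux1} and \eqref{eq:iso2}. Some joint measurability of $\Phi(v)$ in $(t,x)$ is also needed; I would take it from the $L^{2p}$-continuity of the stochastic convolution, using \eqref{eq:lem-aux3}--\eqref{eq:lem-aux4} as in standard Walsh--Dalang theory.

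\textbf{Step 2: contraction-type estimate.}

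By \eqref{L}, the same two estimates give, for $v,w\in\mathcal{B}_{p,T}$ and $M(s)=\sup_y\vvvert v(s,y)-w(s,y)\vvvert_{2p}^2$,
\[
\sup_x\vvvert \Phi(v)(t,x)-\Phi(w)(t,x)\vvvert_{2p}^2\le C_p(T)\int_0^t\bigl(1+(t-s)^{-\alpha/2}\bigr)M(s)\diff s.
\]
This is a Volterra inequality with the integrable singular kernel $(t-s)^{-\alpha/2}$. Setting $M_n(t)=\sup_x\vvvert u^{n+1}(t,x)-u^n(t,x)\vvvert_{2p}^2$, I would apply the extended Gronwall lemma (Dalang 1999, Lemma 15). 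It shows that $\sum_n \sup_{t\le T}M_n(t)^{1/2}<\infty$, so $u^n$ is Cauchy in $\mathcal{B}_{p,T}$.

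The limit $u$ satisfies \eqref{mild} for each $(t,x)$ almost surely. Uniqueness follows by applying the same Gronwall argument to the difference of two solutions, which gives $M\equiv0$. The solution is global because $T$ is arbitrary and solutions on $[0,T]$ and $[0,T']$ agree by uniqueness.

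\textbf{Step 3: moment bound \eqref{eq:momo-u}.}

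Apply the Step 1 estimates directly to $u=\Phi(u)$, with $f(t)=\sup_x\vvvert u(t,x)\vvvert_{2p}^2$:
\[
f(t)\le C\|u_0\|_\infty^2+C_p(T)\int_0^t\bigl(1+(t-s)^{-\alpha/2}\bigr)\bigl(1+f(s)\bigr)\diff s.
\]
The singular Gronwall lemma then yields $\sup_{t\le T}f(t)\le C_p(T)(1+\|u_0\|_\infty^2)$. Raising to the power $p$ gives \eqref{eq:momo-u}.

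The main obstacle is the technical part of Step 1 rather than the estimates: making sure the Picard iterates are predictable and jointly measurable, so that Lemma~\ref{lem:tech} applies at each step. The quantitative core is just \eqref{eq:lem-aux1} combined with the singular Gronwall lemma.
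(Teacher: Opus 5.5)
Your proposal is correct. Step 3 is the paper's argument. It uses the same split of the mild formulation into initial, drift and stochastic terms, and the bound $\bigl|\int_Q G_d(t,x,y)u_0(y)\diff y\bigr|\le\|u_0\|_\infty$ from \eqref{eq:Gdpos}--\eqref{eq:Gdint}. It then applies linear growth \eqref{LG}, Lemma~\ref{lem:tech} with $\kappa(s)=s$, and the singular Gr\"onwall lemma \cite[Lem.~15]{Dalang99}. The only difference there is cosmetic: you close the inequality for $\sup_x\vvvert u(t,x)\vvvert_{2p}^2$ via \eqref{eq:lem-tech1}, whereas the paper works with $\sup_x\E[|u(t,x)|^{2p}]$ via \eqref{eq:lem-tech2} and the factor $3^{2p-1}$.

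Where you genuinely differ is existence and uniqueness. The paper does not prove these; it cites \cite{Dalang99,MR2054575,MR4346865,dalangSPDE}. You instead carry out the Picard scheme in $\mathcal{B}_{p,T}$ yourself, which is essentially what those references do. This gains a small point of rigour. The Gr\"onwall lemma applied to a single function $f_n\equiv f$ needs $\sup_{t\le T}f(t)<\infty$ a priori. In your construction this holds automatically: the iterates do not depend on $p$, so $u\in\mathcal{B}_{p,T}$ for every $p$. The paper leaves this finiteness implicit in the cited existence theory.

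The paper's proof is shorter; yours is self-contained apart from the routine measurability issues you flag. On those, two remarks:
\begin{itemize}
\item Predictability of $\sigma(s,y,v(s,y))$ follows directly from that of $v$ and the continuity of $\sigma$, so no approximation by simple processes is needed.
\item What actually needs the $L^{2p}$-continuity argument, based on \eqref{eq:lem-aux2}--\eqref{eq:lem-aux4}, is the existence of a predictable version of $\Phi(v)$.
\end{itemize}
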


\begin{proof}
Existence of the unique mild solution $u(t,x)$ to the parabolic SPDE~\eqref{prob} can be found in, for example,
\cite{Dalang99,MR2054575,MR4346865,dalangSPDE}.

The moment bounds of $u(t,x)$ are established using standard arguments. A detailed proof is given for completeness.

For all $t\in[0,T]$ and all $x\in Q$, one has
\begin{equation}\label{momo}
\E[|u(t,x)|^{2p}]\le 3^{2p-1}\Bigl(M_p^1(t,x)+M_p^2(t,x)+M_p^3(t,x)\Bigr),
\end{equation}
where $M_p^1(t,x)$, $M_p^2(t,x)$ and $M_p^3(t,x)$ are defined as
\begin{align*}
M_p^1(t,x)&=\left|\int_QG_d(t,x,y)u_0(y)\diff y\right|^{2p}\nonumber\\
M_p^2(t,x)&=\E\left[\left|\int_0^t\int_QG_d(t-s,x,y)b(s,y,u(s,y))\diff y \diff s\right|^{2p}\right]\nonumber\\
M_p^3(t,x)&=\E\left[\left|\int_0^t\int_QG_d(t-s,x,y)\sigma(s,y,u(s,y))\Falpha(\diff s,\diff y)\right|^{2p}\right].
\end{align*}

First, owing to the properties~\eqref{eq:Gdpos} and~\eqref{eq:Gdint} of the heat kernel $G_d$ (see Subsection~\ref{sec:kernel}), one has 
\[
\left|\int_Q G_d(t,x,y)u_0(y)\diff y\right|\le \int_Q G_d(t,x,y)|u_0(y)|\diff y\le \underset{y\in Q}\sup~|u_0(y)| \int_Q G_d(t,x,y)\diff y\le \|u_0\|_\infty.
\]
As a result, one obtains
\begin{equation}\label{momo1}
\underset{t\in[0,T]}\sup~\underset{x\in \overline{Q}}\sup~M_p^1(t,x)\le \|u_0\|_\infty^{2p}.
\end{equation}
Let us then consider $M_p^2(t,x)$. Applying the H\"older inequality and the properties~\eqref{eq:Gdpos} and~\eqref{eq:Gdint} of the heat kernel $G_d$ (see Subsection~\ref{sec:kernel}), one obtains
\begin{align*}
M_p^2(t,x)&\le \int_0^t\int_QG_d(t-s,x,y)\E[|b(s,y,u(s,y))|^{2p}]\diff y \diff s \left(\int_0^t\int_QG_d(t-s,x,y)\diff y \diff s\right)^{2p-1}\\
&\le \int_0^t\underset{y\in \overline{Q}}\sup~\E[|b(s,y,u(s,y))|^{2p}]\diff s ~t^{2p-1}.
\end{align*}
Owing to~\eqref{LG}, the mapping $b$ has at most linear growth, therefore one has
\[
\underset{y\in \overline{Q}}\sup~\E[|b(s,y,u(s,y))|^{2p}]\le C_p\bigl(1+\underset{y\in \overline{Q}}\sup~\E[|u(s,y)|^{2p}]\bigr).
\]
As a result, one obtains
\begin{equation}\label{momo2}
M_p^2(t,x)\le C_{p}(T)\int_0^t \bigl(1+\underset{{y}\in \overline{Q}}\sup~\E[|u(s,y)|^{2p}]\bigr)\diff s.
\end{equation}

Finally, let us consider $M_p^3(t,x)$. Applying the inequality~\eqref{eq:lem-tech2} from Lemma~\ref{lem:tech} (with $\kappa(s)=s$ for all $s\ge 0$), one obtains
\[
M_p^3(t,x)\le C_{p,\alpha}(T)\int_0^t (t-s)^{-\frac{\alpha}{2}}\underset{{y}\in \overline{Q}}\sup~\E[|\sigma(s,y,u(s,y))|^{2p}]\diff s.
\]
Owing to~\eqref{LG}, the mapping $\sigma$ has at most linear growth, therefore one has
\[
\underset{y\in \overline{Q}}\sup~\E[|\sigma(s,y,u(s,y))|^{2p}]\le C_p\bigl(1+\underset{y\in \overline{Q}}\sup~\E[|u(s,y)|^{2p}]\bigr).
\]
Hence, one gets that
\begin{equation}\label{momo3}
M_p^3(t,x)\le C_{p,\alpha}(T)\int_0^t (t-s)^{-\frac{\alpha}{2}}\bigl(1+\underset{{y}\in \overline{Q}}\sup~\E[|u(s,y)|^{2p}]\bigr)\diff s.
\end{equation}
Combining the upper bounds~\eqref{momo1},~\eqref{momo2} and~\eqref{momo3}, one obtains the following inequality: there exists $C_{p,\alpha}(T)\in(0,\infty)$ such that for all $t\in[0,T]$ one has
\[
\underset{x\in \overline{Q}}\sup~\E[|u(t,x)|^{2p}]\le C_{p,\alpha}(T)\left(\|u_0\|_\infty^{2p}+\int_0^t \left( 1 + (t-s)^{-\frac{\alpha}{2}} \right)\bigl(1+\underset{{y}\in \overline{Q}}\sup~\E[|u(s,y)|^{2p}]\bigr)\diff s\right).
\]
Applying a Gr\"onwall inequality (e.g., \cite[Lem. 15]{Dalang99}) then provides the moment bounds~\eqref{eq:momo-u}.

This concludes the proof of Proposition~\ref{prop:uExistence}.
\end{proof}

Let us now state and prove a result on temporal H\"older regularity of the mild solution~\eqref{mild} to~\eqref{prob}. The version presented in Proposition~\ref{prop:reg} has not previously been obtained in the literature, to the best of our knowledge.

\begin{proposition}\label{prop:reg}
Let Assumptions~\ref{ass:init} and~\ref{ass:coeff} be satisfied. Let $T\in(0,\infty)$ and $\{u(t,x),\,(t,x)\in [0,T]\times \overline{Q}\}$ be the unique mild solution to~\eqref{prob}.
For all $\alpha\in(0,2\wedge d)$, $p\in[1,\infty)$ and $\gamma\in(0,\frac12-\frac{\alpha}{4})$, there exists $C_{p,\alpha,\gamma}(T)\in(0,\infty)$ such that for all $0<t_1\leq t_2\le T$ one has
\begin{equation}\label{eq:reg}
\underset{x\in \overline{Q}}\sup~\bigl(\E[|u(t_2,x)-u(t_1,x)|^{p}]\bigr)^{\frac{1}{p}}\le C_{p,\alpha,\gamma}(T)\frac{|t_2-t_1|^\gamma}{t_1^\gamma}(1+\|u_0\|_\infty).
\end{equation}
\end{proposition}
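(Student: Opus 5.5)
We prove the bound by splitting the mild formulation~\eqref{mild} at times $t_1\le t_2$ into the standard pieces and treating each with Lemmas~\ref{lem:GdEstimates-interpoled}, \ref{lem:aux} and~\ref{lem:tech}. It suffices to take $p\ge 2$ (the case $p<2$ follows by Jensen) and $t_2-t_1\le t_1$. Indeed, if $t_2-t_1>t_1$, then $|t_2-t_1|^\gamma/t_1^\gamma\ge 1$, and the claim follows from the moment bound~\eqref{eq:momo-u} via $\vvvert u(t_2,x)-u(t_1,x)\vvvert_p\le 2\sup\vvvert u\vvvert_p\le C(1+\|u_0\|_\infty)$.

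We write
\[
u(t_2,x)-u(t_1,x)=I_0+I_1+I_2+J_1+J_2,
\]
where the five terms are as follows.
\begin{itemize}
\item $I_0=\int_Q (G_d(t_2,x,y)-G_d(t_1,x,y))u_0(y)\diff y$ is the initial-value term.
\item $I_1=\int_0^{t_1}\int_Q (G_d(t_2-s,x,y)-G_d(t_1-s,x,y))b(s,y,u(s,y))\diff y\diff s$ and $I_2=\int_{t_1}^{t_2}\int_Q G_d(t_2-s,x,y)b(s,y,u(s,y))\diff y\diff s$ are the drift terms.
\item $J_1$ and $J_2$ are the analogous stochastic terms, with $\sigma$ and $\Falpha(\diff s,\diff y)$.
\end{itemize}

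For $I_0$ we use~\eqref{eq:GdEstimates-interpoled} with exponent $\gamma$. Integrating the Gaussian bounds in $y$ gives a constant, so $|I_0|\le C|t_2-t_1|^\gamma t_1^{-\gamma}\|u_0\|_\infty$. Only continuity of $u_0$ is used here, and this is precisely where the singular factor $t_1^{-\gamma}$ enters.

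For the drift, Minkowski's inequality, \eqref{LG} and~\eqref{eq:momo-u} reduce $I_2$ to $\int_{t_1}^{t_2}\int_Q G_d\le |t_2-t_1|$ times $(1+\|u_0\|_\infty)$. For $I_1$ we apply~\eqref{eq:GdEstimates-interpoled} with exponent $\gamma$ at the times $t_1-s<t_2-s$. This yields the factor $\int_0^{t_1}(t_2-t_1)^\gamma(t_1-s)^{-\gamma}\diff s\le C(T)|t_2-t_1|^\gamma$, which is integrable because $\gamma<1$.

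For the stochastic terms we first apply the BDG inequality~\eqref{bdg}, then Minkowski's inequality in $L^{p/2}(\Omega)$ as in the proof of Lemma~\ref{lem:tech}, pulling out $\sup_y\vvvert\sigma(s,y,u(s,y))\vvvert_p^2\le C(1+\|u_0\|_\infty)^2$. The remaining quantities are deterministic kernel norms.
\begin{itemize}
\item For $J_2$, estimate~\eqref{eq:lem-aux2} gives $\vvvert J_2\vvvert_p^2\le C|t_2-t_1|^{1-\alpha/2}(1+\|u_0\|_\infty)^2$. Since $2\gamma<1-\alpha/2$ and $t_2-t_1\le T$, this is bounded by $C|t_2-t_1|^{2\gamma}$.
\item For $J_1$, estimate~\eqref{eq:lem-aux4} gives $\vvvert J_1\vvvert_p^2\le C t_1^{1-\alpha/2-2\gamma}|t_2-t_1|^{2\gamma}(1+\|u_0\|_\infty)^2$. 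The power of $t_1$ is nonnegative, hence bounded by $T^{1-\alpha/2-2\gamma}$.
\end{itemize}

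The one step needing care is the pulling-out in the stochastic terms. Because $\|\cdot\|_{(\alpha)}$ is a double integral involving $|\sigma(s,y)|\,|\sigma(s,z)|$, we bound $\vvvert\sigma(s,y)\sigma(s,z)\vvvert_{p/2}$ by Cauchy--Schwarz, exactly as in Lemma~\ref{lem:tech}. The crucial input is then estimate~\eqref{eq:lem-aux4}, which we take as given. Collecting all terms, and noting $|t_2-t_1|^\gamma\le T^\gamma|t_2-t_1|^\gamma t_1^{-\gamma}$, gives~\eqref{eq:reg} uniformly in $x$.
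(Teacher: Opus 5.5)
Your proposal is correct and follows the paper's proof essentially step by step. You use the same five-term splitting, the interpolated kernel bound \eqref{eq:GdEstimates-interpoled} for the initial-value and drift terms, and BDG together with \eqref{eq:lem-aux4} and \eqref{eq:lem-aux2} for the two stochastic terms. The only differences are minor: you pull out the $\sigma$-moments via Minkowski in $L^{p/2}(\Omega)$ plus Cauchy--Schwarz as in Lemma~\ref{lem:tech}, whereas the paper uses H\"older against the auxiliary measures $\nu^{(1)}_{t_1,t_2,x}$ and $\nu^{(2)}_{t_1,t_2,x}$, and your preliminary reduction to $t_2-t_1\le t_1$ is never actually needed.
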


\begin{proof}
Assume that $0<t_1 \le t_2 \le T$ and let $x\in Q$, then one has the decomposition
\[
u(t_2,x)-u(t_1,x)=\delta^1(t_1,t_2,x)+\delta^2(t_1,t_2,x)+\delta^3(t_1,t_2,x)+\delta^4(t_1,t_2,x)+\delta^5(t_1,t_2,x),
\]
where the error terms $\delta^j(t_1,t_2,x)$ for all $j\in\{1,\ldots,5\}$ are given by
\begin{align*}
\delta^1(t_1,t_2,x)&=\int_Q\left[G_d(t_2,x,y)-G_d(t_1,x,y)\right]u_0(y)\diff y\\
\delta^2(t_1,t_2,x)&=\int_{0}^{t_1}\int_Q\left[G_d(t_2-s,x,y)-G_d(t_1-s,x,y)\right]b(s,y,u(s,y))\diff y \diff s\\
\delta^3(t_1,t_2,x)&=\int_{t_1}^{t_2}\int_QG_d(t_2-s,x,y)b(s,y,u(s,y))\diff y \diff s\\
\delta^4(t_1,t_2,x)&=\int_{0}^{t_1}\int_Q\left[G_d(t_2-s,x,y)-G_d(t_1-s,x,y)\right]\sigma(s,y,u(s,y))\Falpha(\diff s,\diff y)\\
\delta^5(t_1,t_2,x)&=\int_{t_1}^{t_2}\int_QG_d(t_2-s,x,y)\sigma(s,y,u(s,y))\Falpha(\diff s,\diff y).
\end{align*}

Let us consider the term $\delta^1(t_1,t_2,x)$. Assume that $\gamma\in(0,1)$. Applying the inequality~\eqref{eq:GdEstimates-interpoled} on the heat kernel (see Subsection~\ref{sec:kernel}), one obtains the following upper bounds:
\begin{align*}
\Big|\int_Q\left[G_d(t_2,x,y)-G_d(t_1,x,y)\right]u_0(y)\diff y \Big| & \le \int_Q\big|G_d(t_2,x,y)-G_d(t_1,x,y)\big| |u_0(y)|\diff y \\
& \le \underset{y\in \overline{Q}}\sup~|u_0(y)| \int_Q\big|G_d(t_2,x,y)-G_d(t_1,x,y)\big| \diff y \\
& \le C \frac{|t_2-t_1|^{\gamma}}{t_1^\gamma}\|u_0\|_\infty.
\end{align*}
Therefore one obtains the following upper bound for $\delta^1(t_1,t_2,x)$: for all $\gamma\in(0,1)$, there exists $C_\gamma\in(0,\infty)$ such that
\begin{equation}\label{eq:delta1}
\underset{x\in \overline{Q}}\sup~|\delta^1(t_1,t_2,x)|\le C_\gamma \frac{|t_2-t_1|^{\gamma}}{t_1^\gamma}\|u_0\|_\infty,\quad \forall~0<t_1<t_2.
\end{equation}

Let us then consider the term $\delta^2(t_1,t_2,x)$. Let $p\in[1,\infty)$. Applying the Minkowski inequality, one has
\begin{align*}
\vvvert\int_{0}^{t_1}\int_Q & \left[G_d(t_2-s,x,y)-G_d(t_1-s,x,y)\right]b(s,y,u(s,y))\diff y \diff s\vvvert_p\\
&\le \int_{0}^{t_1}\int_Q \big|G_d(t_2-s,x,y)-G_d(t_1-s,x,y)\big|\vvvert b(s,y,u(s,y))\vvvert_p\diff y \diff s\\
&\le \underset{s\in[0,T]}\sup~\underset{y\in \overline{Q}}\sup~\vvvert b(s,y,u(s,y))\vvvert_p\int_{0}^{t_1}\int_Q \big|G_d(t_2-s,x,y)-G_d(t_1-s,x,y)\big|\diff y \diff s.
\end{align*}
Owing to the condition~\eqref{LG} from Assumption~\ref{ass:coeff}, the mapping $b$ has at most linear growth. Therefore, applying the moment bounds~\eqref{eq:momo-u} from Proposition~\eqref{prop:uExistence}, one obtains the upper bound
\[
\underset{s\in[0,T]}\sup~\underset{y\in \overline{Q}}\sup~\vvvert b(s,y,u(s,y))\vvvert_p\le C\bigl(1+\underset{s\in[0,T]}\sup~\underset{y\in \overline{Q}}\sup~\vvvert u(s,y)\vvvert_p\bigr)\le C_p(T)\bigl(1+\|u_0\|_\infty\bigr).
\]
Applying the inequality~\eqref{eq:GdEstimates-interpoled} on the heat kernel (see Subsection~\ref{sec:kernel}), one obtains
\begin{align*}
\int_{0}^{t_1}\int_Q \big|G_d(t_2-s,x,y)-G_d(t_1-s,x,y)\big|\diff y \diff s &\le C \int_{0}^{t_1}\frac{|t_2-t_1|^\gamma}{(t_1-s)^\gamma}\int_Q (t_1-s)^{-\frac{d}{2}}e^{-c_{d} \frac{|y-x|^2}{2(t_1-s)}} \diff y  \diff s\\
&+C \int_{0}^{t_1}\frac{|t_2-t_1|^\gamma}{(t_1-s)^\gamma}\int_Q (t_2-s)^{-\frac{d}{2}}e^{-c_{d} \frac{|y-x|^2}{2(t_2-s)}} \diff y  \diff s\\
&\le C \int_{0}^{t_1}\int_Q \frac{|t_2-t_1|^\gamma}{(t_1-s)^\gamma} \diff s\\
&\le C_\gamma(T)|t_2-t_1|^\gamma.
\end{align*}
Therefore one obtains the following upper bound for $\delta^2(t_1,t_2,x)$: for all $p\in[1,\infty)$ and $\gamma\in(0,1)$, there exists $C_{p,\gamma}(T)\in(0,\infty)$ such that
\begin{equation}\label{eq:delta2}
\underset{x\in \overline{Q}}\sup~\vvvert\delta^2(t_1,t_2,x)\vvvert_p\le C_{p,\gamma}(T) |t_2-t_1|^{\gamma}\bigl(1+\|u_0\|_\infty\bigr),\quad \forall~0<t_1<t_2\le T.
\end{equation}

The treatment of the term $\delta^3(t_1,t_2,x)$ is similar: applying the Minkowski inequality and the property~\eqref{eq:Gdint} of the heat kernel (see Subsection~\ref{sec:kernel}), one has
\begin{align*}
\vvvert \int_{t_1}^{t_2}\int_QG_d(t_2-s,x,y)b(s,y,u(s,y))\diff y \diff s \vvvert_p &\le \int_{t_1}^{t_2}\int_QG_d(t_2-s,x,y)\vvvert b(s,y,u(s,y))\vvvert_p\diff y \diff s\\
&\le \underset{s\in[0,T]}\sup~\underset{y\in \overline{Q}}\sup~\vvvert b(s,y,u(s,y))\vvvert_p \int_{t_1}^{t_2}\int_QG_d(t_2-s,x,y)\diff y \diff s \\
&\le C_p(T)\bigl(1+\|u_0\|_\infty\bigr)|t_2-t_1|.
\end{align*}
Therefore one obtains the following upper bound for $\delta^3(t_1,t_2,x)$: for all $p\in[1,\infty)$ and $\gamma\in(0,1)$, there exists $C_{p,\gamma}(T)\in(0,\infty)$ such that
\begin{equation}\label{eq:delta3}
\underset{x\in \overline{Q}}\sup~\vvvert\delta^3(t_1,t_2,x)\vvvert_p\le C_{p,\gamma}(T) |t_2-t_1|^{\gamma}\bigl(1+\|u_0\|_\infty\bigr),\quad \forall~0<t_1<t_2\le T.
\end{equation}

It remains to deal with the terms $\delta^4(t_1,t_2,x)$ and $\delta^5(t_1,t_2,x)$. Applying the Burkholder--Davis--Gundy inequality~\eqref{bdg}, one obtains the upper bound
\begin{align*}
\E[|\delta^4(t_1,t_2,x)|^{2p}]&=\E \left[ \left|\int_{0}^{t_1}\int_Q\left[G_d(t_2-s,x,y)-G_d(t_1-s,x,y)\right]\sigma(s,y,u(s,y))\Falpha(\diff s,\diff y) \right|^{2p} \right]\\
&\le C_p(T)\E \left[ \left|\int_{0}^{t_1}\|\left[G_d(t_2-s,x,\cdot)-G_d(t_1-s,x,\cdot)\right]\sigma(s,\cdot,u(s,\cdot))\|_{(\alpha)}^2 \right|^p \right].
\end{align*}
For all $(t_1,t_2,x)\in [0,T]^2\times \overline{Q}$ (with $t_1<t_2$), introduce the auxiliary positive measure $\nu_{t_1,t_2,x}^{(1)}$ on $[0,t_1]\times Q^2$, which is absolutely continuous with respect to the Lebesgue measure, and with Radon--Nikodym derivative given by
\begin{align*}
\frac{\diff\nu_{t_1,t_2,x}^{(1)}(s,y,z)}{\diff s \diff y \diff z}=
\left|G_d(t_2-s,x,y)-G_d(t_1-s,x,y)\right|\left|G_d(t_2-s,x,z)-G_d(t_1-s,x,z)\right||y-z|^{-\alpha}.
\end{align*}
Making use of the auxiliary measure $\nu_{t_1,t_2,x}^{(1)}$ introduced above yields the upper bound
\[
\E[|\delta^4(t_1,t_2,x)|^{2p}]\le C_p(T)\E \left[ \left|\int_{0}^{t_1}\iint_{Q\times Q}|\sigma(s,y,u(s,y))| |\sigma(s,z,u(s,z))| \diff \nu_{t_1,t_2,x}^{(1)}(s,y,z) \right|^p \right].
\] 
Applying the H\"older inequality, one then obtains
\begin{align*}
\E[|\delta^4(t_1,t_2,x)|^{2p}]&\le \int_{0}^{t_1}\iint_{Q\times Q}\E[|\sigma(s,y,u(s,y))|^p |\sigma(s,z,u(s,z))|^p] \diff \nu_{t_1,t_2,x}^{(1)}(s,y,z)~ \nu_{t_1,t_2,x}^{(1)}([0,t_1]\times Q^2)^{p-1}\\
&\le \underset{s\in[0,T]}\sup~\underset{y,z\in \overline{Q}}\sup~\E[|\sigma(s,y,u(s,y))|^p |\sigma(s,z,u(s,z))|^p]~\nu_{t_1,t_2,x}^{(1)}([0,t_1]\times Q^2)^{p}.
\end{align*}
Owing to the condition~\eqref{LG} from Assumption~\ref{ass:coeff}, the mapping $\sigma$ has at most linear growth. Therefore, applying the moment bounds~\eqref{eq:momo-u} from Proposition~\eqref{prop:uExistence}, one obtains the upper bounds
\begin{align*}
\underset{s\in[0,T]}\sup~\underset{y,z\in \overline{Q}}\sup~\E[|\sigma(s,y,u(s,y))|^p |\sigma(s,z,u(s,z))|^p]&\le \underset{s\in[0,T]}\sup~\underset{y\in \overline{Q}}\sup~\E[|\sigma(s,y,u(s,y))|^{2p}]\\
&\le C_p(T)\bigl(1+\|u_0\|^{2p}_{\infty}\bigr).
\end{align*}
Moreover, applying the inequality~\eqref{eq:lem-aux4} from Lemma~\ref{lem:aux}, with the condition $\gamma\in(0,\frac12-\frac{\alpha}{4})$, if $0<t_1<t_2\le T$, one obtains the upper bound
\[
\nu_{t_1,t_2,x}^{(1)}([0,t_1]\times Q^2)=\int_{0}^{t_1}\|G_d(t_2-s,x,\cdot)-G_d(t_1-s,x,\cdot)\|_{(\alpha)}^2 \diff s \le C_{\alpha,\mu}(T)|t_2-t_1|^{2\gamma}.
\]
Gathering the upper bounds, one obtains the following upper bound for $\delta^4(t_1,t_2,x)$: for all $p\in[1,\infty)$ and $\gamma\in(0,\frac{1}{2}-\frac{\alpha}{4})$, there exists $C_{p,\alpha,\gamma}(T)\in(0,\infty)$ such that
\begin{equation}\label{eq:delta4}
\underset{x\in \overline{Q}}\sup~\vvvert\delta^4(t_1,t_2,x)\vvvert_{2p}\le C_{p,\alpha,\gamma}(T) |t_2-t_1|^{\gamma}\bigl(1+\|u_0\|_\infty\bigr),\quad \forall~0<t_1<t_2\le T.
\end{equation}

Finally, the treatment of the term $\delta^5(t_1,t_2,x)$ is similar to the treatment of $\delta^4(t_1,t_2,x)$ above. Applying the Burkholder--Davis--Gundy inequality~\eqref{bdg}, one obtains the upper bound
\begin{align*}
\E[|\delta^5(t_1,t_2,x)|^{2p}]=&\E \left[ \left|\int_{t_1}^{t_2}\int_QG_d(t_2-s,x,y)\sigma(s,y,u(s,y))\Falpha(\diff s,\diff y) \right|^{2p} \right]\\
&\le C_p(T)\E \left[ \left|\int_{t_1}^{t_2} \|G_d(t_2-s,x,\cdot)\sigma(s,\cdot,u(s,\cdot))\|_{(\alpha)}^2 \right|^p \right].
\end{align*}
For all $(t_1,t_2,x)\in [0,T]^2\times \overline{Q}$ (with $t_1<t_2$), we introduce the auxiliary positive measure $\nu_{t_1,t_2,x}^{(2)}$ on $[t_1,t_2]\times Q^2$, which is absolutely continuous with respect to the Lebesgue measure, and with Radon--Nikodym derivative given by
\[
\frac{\diff\nu_{t_1,t_2,x}^{(2)}(s,y,z)}{\diff s \diff y \diff z}=G_d(t_2-s,x,y)G_d(t_2-s,x,z)|y-z|^{-\alpha}.
\]
Making use of the auxiliary measure $\nu_{t_1,t_2,x}^{(2)}$ introduced above yields the upper bound
\[
\E[|\delta^5(t_1,t_2,x)|^{2p}]\le C_p(T)\E \left[ \left|\int_{t_1}^{t_2}\iint_{Q\times Q}|\sigma(s,y,u(s,y))| |\sigma(s,z,u(s,z))| \diff \nu_{t_1,t_2,x}^{(2)}(s,y,z) \right|^p \right].
\]
Applying the H\"older inequality, one then obtains
\begin{align*}
&\E \left[ \left|\int_{t_1}^{t_2}\iint_{Q\times Q}|\sigma(s,y,u(s,y))| |\sigma(s,z,u(s,z))| \diff \nu_{t_1,t_2,x}^{(2)}(s,y,z) \right|^p \right]\\
&\le \int_{t_1}^{t_2}\iint_{Q\times Q}\E[|\sigma(s,y,u(s,y))|^p |\sigma(s,z,u(s,z))|^p] \diff \nu_{t_1,t_2,x}^{(2)}(s,y,z)~ \nu_{t_1,t_2,x}^{(2)}([t_1,t_2]\times Q^2)^{p-1}\\
&\le \underset{s\in[0,T]}\sup~\underset{y,z\in \overline{Q}}\sup~\E[|\sigma(s,y,u(s,y))|^p |\sigma(s,z,u(s,z))|^p]~\nu_{t_1,t_2,x}^{(2)}([t_1,t_2]\times Q^2)^{p}.
\end{align*}
Owing to the condition~\eqref{LG} from Assumption~\ref{ass:coeff}, the mapping $\sigma$ has at most linear growth. Therefore, applying the moment bounds~\eqref{eq:momo-u} from Proposition~\eqref{prop:uExistence}, one obtains the upper bounds
\begin{align*}
\underset{s\in[0,T]}\sup~\underset{y,z\in \overline{Q}}\sup~\E[|\sigma(s,y,u(s,y))|^p |\sigma(s,z,u(s,z))|^p]&\le \underset{s\in[0,T]}\sup~\underset{y\in \overline{Q}}\sup~\E[|\sigma(s,y,u(s,y))|^{2p}]\\
&\le C_p(T)\bigl(1+\|u_0\|_{\infty}^{2p}\bigr).
\end{align*}
Moreover, applying the inequality~\eqref{eq:lem-aux4} from Lemma~\ref{lem:aux}, if $0<t_1<t_2\le T$, one obtains the upper bound
\[
\nu_{t_1,t_2,x}^{(2)}([t_1,t_2]\times Q^2)=\int_{t_1}^{t_2}\|G_d(t_2-s,x,\cdot)\|_{(\alpha)}^2 \diff s \le C_{\alpha}(T)|t_2-t_1|^{1-\frac{\alpha}{2}}.
\]
Gathering the upper bounds, and taking into account the condition $\gamma\in(0,\frac{1}{2}-\frac{\alpha}{4})$, one obtains the following upper bound for $\delta^5(t_1,t_2,x)$: for all $\gamma\in(0,\frac{1}{2}-\frac{\alpha}{4})$ and $p\in[1,\infty)$, there exists $C_{p,\alpha,\gamma}(T)\in(0,\infty)$ such that
\begin{equation}\label{eq:delta5}
\underset{x\in \overline{Q}}\sup~\vvvert\delta^5(t_1,t_2,x)\vvvert_{2p}\le C_{p,\alpha,\gamma}(T) |t_2-t_1|^{\gamma}\bigl(1+\|u_0\|_\infty\bigr),\quad \forall~0<t_1<t_2\le T.
\end{equation}
Combining the upper bounds~\eqref{eq:delta1},~\eqref{eq:delta2},~\eqref{eq:delta3},
~\eqref{eq:delta4} and~\eqref{eq:delta5} then provides the inequality~\eqref{eq:reg}. The proof of Proposition~\ref{prop:reg} is thus completed.
\end{proof}

\section{Convergence analysis of the stochastic exponential integrator}\label{sec-conv}

In this section, we introduce the stochastic exponential integrator given by~\eqref{timeapp} and~\eqref{sexp} applied to the stochastic partial differential equation~\eqref{prob}. We then state Theorem~\ref{thm:main}, which is the main result of this article, and provides strong convergence of the integrator when the time-step size $\dt$ goes to $0$, with rate $\gamma$ arbitrarily close to $\frac12-\frac{\alpha}{4}$.

Let $T\in(0,\infty)$ and given an integer $m\in\N$ define the time-step size $\dt=\frac{T}{m}$. For all $\ell\in\{0,\ldots,m\}$, introduce the grid times $t_\ell=\ell\dt$. In addition, let $\lfloor\cdot\rfloor$ denote the integer part, and for all $s\in[0,T]$ set
\[
\kappa_m^T(s)=\dt\left\lfloor\frac{s}{\dt}\right\rfloor=\frac{T}{m}\left\lfloor \frac{ms}{T}\right\rfloor=\sup~\{t_\ell;~t_\ell\le s\}.
\]

Let us first start by giving a continuous-time mild formulation of the stochastic exponential integrator which is inspired
by \cite{MR4050540} which treats the case of one dimensional SPDE driven by space-time white noise. The random field $\{u^{m}(t,x),\, (t,x)\in [0,T]\times \overline{Q}\}$ is defined as the solution to the following system: for all $t\in[0,T]$ and $x\in \overline{Q}$,
\begin{equation}\label{timeapp}
\begin{aligned}
u^{m}(t,x)&=\int_Q G_d(t,x,y)u_0(y)\diff y\\
&\quad+\int_0^t\int_Q G_d(t-\kappa_m^T(s),x,y)b(\kappa_m^T(s),y,u^{m}(\kappa_m^T(s),y))\diff y\diff s\\
&\quad+\int_0^t\int_Q G_d(t-\kappa_m^T(s),x,y)\sigma(\kappa_m^T(s),y,u^{m}(\kappa_m^T(s),y))\,\Falpha(\diff s,\diff y).
\end{aligned}
\end{equation}
The formulation~\eqref{timeapp} of the stochastic exponential integrator is mainly useful for the theoretical
analysis of the scheme.
For its practical implementation, it is sufficient to consider approximations $\mathcal U^m_\ell=u^m(t_\ell,\cdot)$ at the grid times $t_\ell$. From the definition~\eqref{timeapp} of the integrator and the semigroup property of the heat kernel $G_d$, for all $\ell\in\{0,\ldots,m-1\}$ and all $x\in \overline{Q}$, one has
\begin{equation}\label{sexp0}
\begin{aligned}
\mathcal{U}_{\ell+1}^{m}(x)&=\int_Q G_d(\dt,x,y)\mathcal{U}_{\ell}^{m}(y)\diff y\\
&\quad+\dt\int_Q G_d(\dt,x,y)b(t_\ell,y,\mathcal{U}_{\ell}^m(y))\diff y\\
&\quad+\int_{t_{\ell}}^{t_{\ell+1}}\int_Q G_d(\dt,x,y)\sigma(t_\ell,y,\mathcal{U}_\ell^m(y))\,\Falpha(\diff s,\diff y).
\end{aligned}
\end{equation}
Let the operator $e^{\dt\Delta}$ be defined such that for all $v\in\mathcal{C}_0^0(\overline{Q})$ one has
\[
e^{\dt\Delta}v(x)=\int_Q G_d(\dt,x,y)v(y)\diff y,\quad \forall~x\in \overline{Q}.
\]
In addition, define Gaussian random variables $\bigl(\delta_\ell F_\alpha\bigr)_{0\le\ell\le m-1}$ by
\[
\delta_\ell F_\alpha(\varphi)=F_\alpha\bigl(\mathds{1}_{[t_{\ell},t_{\ell+1})}\otimes \varphi\bigr)=\int_{t_{\ell}}^{t_{\ell+1}}\int_Q \varphi(y)\,\Falpha(\diff s,\diff y),\quad \forall~\varphi\in\mathcal{H}_\alpha.
\]
Note that if $\ell_1\neq\ell_2$ then the Gaussian random variables $\delta_{\ell_1}\Falpha$ and $\delta_{\ell_2}\Falpha$ are independent. Moreover, for all $\ell\in\{0,\ldots,m-1\}$, $\delta_\ell\Falpha$ is a centered Gaussian random variable, with covariance structure given by
\[
\E\bigl[\delta_\ell\Falpha(\varphi)\delta_\ell\Falpha(\psi)\bigr]=\tau \langle \varphi,\psi\rangle_{\alpha}=\tau\iint_{Q\times Q}\varphi(x)\psi(y)|x-y|^{-\alpha}\diff x\diff y,\qquad \forall~\varphi,\psi\in\mathcal{H}_\alpha.
\]
Employing the notation introduced above, the stochastic exponential integrator is written as
\begin{equation}\label{sexp}
\mathcal U^m_{\ell+1}=\e^{\dt\Delta}\left(\mathcal U^m_\ell+\dt b(t_\ell,\cdot,\mathcal U_\ell^m)+\sigma(t_\ell,\cdot,\mathcal U_\ell^m)\delta_\ell \Falpha\right),\ \ell = 0,\ldots, m-1.
\end{equation}
The initial value of the scheme is $\mathcal{U}_0^m=u_0$ for any value $\dt=T/m$ of the time-step size.

We are now in position to state the main result of this article.
\begin{theorem}\label{thm:main}
Let Assumptions~\ref{ass:init} and~\ref{ass:coeff} be satisfied. Let $T\in(0,\infty)$. Let $u$ denote the unique mild solution given by~\eqref{mild} to the stochastic partial differential equation~\eqref{prob}. For all $m\in\N$, let $u^m$ denote the solution to the stochastic exponential integrator~\eqref{timeapp} with time-step size $\dt=T/m$.

For all $p\in[1,\infty)$ and $\gamma\in(0,\frac{1}{2}-\frac{\alpha}{4})$, there exists $C_{p,\alpha,\gamma}(T)\in(0,\infty)$ such that for all $m\in\N$ one has
\begin{equation}\label{eq:main}
\sup_{(t,x)\in [0,T]\times \overline{Q}}\E\left[ |u^{m}(t,x)-u(t,x)|^{2p} \right]^{\frac1{2p}} \le	C_{p,{\alpha},\gamma}(T) \dt^{\gamma}.
\end{equation}
\end{theorem}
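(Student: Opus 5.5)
We compare the mild formulations \eqref{mild} and \eqref{timeapp}. Before doing so, we need moment bounds for the scheme. By the same argument as in Proposition~\ref{prop:uExistence}, using \eqref{eq:lem-tech2} with $\kappa=\kappa_m^T$, \eqref{LG}, and a Gr\"onwall lemma, we obtain
\[
\sup_{m}\sup_{t,x}\E[|u^m(t,x)|^{2p}]\le C_p(T)\bigl(1+\|u_0\|_\infty^{2p}\bigr).
\]
The Gr\"onwall step needs care because the kernel is $(t-\kappa(s))^{-\alpha/2}\le (t-s)^{-\alpha/2}$. So the same Volterra-type inequality applies to $\sup_{s\le t}\sup_x\E|u^m(s,x)|^{2p}$.

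Set $e^m(t,x)=u^m(t,x)-u(t,x)$. The initial-condition terms cancel. We split the drift and diffusion differences into three pieces:
\begin{itemize}
\item[(i)] $[G_d(t-\kappa(s),x,y)-G_d(t-s,x,y)]\,b(\kappa(s),y,u^m(\kappa(s),y))$;
\item[(ii)] $G_d(t-s,x,y)\,[b(\kappa(s),y,u^m(\kappa(s),y))-b(s,y,u(s,y))]$;
\item[(iii)] the same two pieces with $\sigma$ and $\Falpha(\diff s,\diff y)$ in place of $b$ and $\diff y\diff s$.
\end{itemize}
Piece (ii) is further split using \eqref{L}:
\[
|s-\kappa(s)|^{\frac12-\frac\alpha4}+|u^m(\kappa(s),y)-u(\kappa(s),y)|+|u(\kappa(s),y)-u(s,y)|.
\]
The first term gives $\dt^{\frac12-\frac\alpha4}$. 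The second gives the error at the grid time. The third is controlled by Proposition~\ref{prop:reg}:
\[
\vvvert u(s,y)-u(\kappa(s),y)\vvvert_{2p}\le C\dt^\gamma\kappa(s)^{-\gamma}(1+\|u_0\|_\infty)
\]
for $s\ge\tau$. For $s\in[0,\tau)$ we use instead the crude moment bound, which contributes $\int_0^\tau(t-s)^{-\alpha/2}\,\diff s\le C\tau^{1-\alpha/2}$ in squared norm. On $[\tau,t]$ the singular weight is handled by
\[
\int_0^t (t-s)^{-\alpha/2}\kappa(s)^{-2\gamma}\,\diff s\le C\int_0^t (t-s)^{-\alpha/2}s^{-2\gamma}\,\diff s<\infty,
\]
since $\kappa(s)\ge s/2$ for $s\ge\tau$ and $2\gamma<1$. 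For the stochastic version of (ii) we apply \eqref{eq:lem-tech1} with $\kappa(s)=s$. For the deterministic version we use Minkowski together with \eqref{eq:Gdint}.

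The main obstacle is the kernel-difference terms (i), especially the stochastic one,
\[
\int_0^t\int_Q[G_d(t-\kappa(s),x,y)-G_d(t-s,x,y)]\,\sigma(\cdots)\,\Falpha(\diff s,\diff y).
\]
Here we use BDG \eqref{bdg}, the H\"older trick with the auxiliary measure from the proof of Proposition~\ref{prop:reg}, and the moment bounds for $u^m$. This reduces the problem to a uniform-in-$x$ bound
\[
\int_0^t\|G_d(t-\kappa(s),x,\cdot)-G_d(t-s,x,\cdot)\|_{(\alpha)}^2\,\diff s\le C\dt^{2\gamma}.
\]
This is not literally \eqref{eq:lem-aux4}, because the time shift $s-\kappa(s)$ depends on $s$. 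We handle it by splitting off the last interval $s\in[\kappa(t),t]$, where each kernel is bounded separately via \eqref{eq:lem-aux1}. This gives a contribution of order
\[
\int_{\kappa(t)}^t\Bigl((t-s)^{-\alpha/2}+(t-\kappa(s))^{-\alpha/2}\Bigr)\diff s\le C\dt^{1-\alpha/2}.
\]
On $[0,\kappa(t))$ we have $t-s\ge t-\kappa(t)\ge 0$ and $(t-\kappa(s))-(t-s)\le\dt$. Pointwise, \eqref{eq:GdEstimates-interpoled} gives
\[
|G_d(t-\kappa(s),x,y)-G_d(t-s,x,y)|\le C\,\dt^{\gamma'}(t-s)^{-\gamma'}\times\{\text{Gaussian kernels}\}.
\]
Inserting this into $\|\cdot\|_{(\alpha)}^2$ and using the computation behind \eqref{eq:lem-aux1} (Gaussians at scales $t-s$ and $t-\kappa(s)$, comparable here) yields the bound $C\dt^{2\gamma'}(t-s)^{-2\gamma'-\alpha/2}$. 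This is integrable for $2\gamma'<1-\alpha/2$, so taking $\gamma'=\gamma$ works. The deterministic term (i) is treated the same way in $L^1(\diff y)$, and it is easier since $\gamma<1$.

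Finally, set $\phi(t)=\sup_x\vvvert e^m(t,x)\vvvert_{2p}^2$. Collecting the estimates gives
\[
\phi(t)\le C\dt^{2\gamma}+C\int_0^t\bigl(1+(t-s)^{-\alpha/2}\bigr)\phi(\kappa(s))\,\diff s.
\]
Passing to $\sup_{r\le t}\phi(r)$ and applying Gr\"onwall's lemma \cite[Lem. 15]{Dalang99} gives \eqref{eq:main}, uniformly in $(t,x)$.
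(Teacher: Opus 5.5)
Your proof is correct in substance and follows the paper's strategy. The kernel-difference terms are bounded by $C\tau^{2\gamma}(t-s)^{-2\gamma-\alpha/2}$ after BDG and the auxiliary-measure H\"older trick. The time-H\"older and Lipschitz terms are handled the same way, Proposition~\ref{prop:reg} is used away from the first step $[0,\tau)$, and a singular Gr\"onwall inequality closes the argument.

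The one structural difference is the direction of telescoping. The paper pairs $G_d(t-s)-G_d(t-\kappa_m^T(s))$ with $b(s,y,u(s,y))$ and $\sigma(s,y,u(s,y))$, and keeps $G_d(t-\kappa_m^T(s))$ in the remaining terms. It therefore needs only the moment bounds \eqref{eq:momo-u} for $u$, and it invokes Lemma~\ref{lem:tech} with $\kappa=\kappa_m^T$. You pair the kernel difference with the coefficients at $u^m(\kappa(s))$ and keep $G_d(t-s)$, which forces a preliminary moment bound for $u^m$. That costs little, since the final Gr\"onwall step needs the error to be finite a priori anyway, and the paper leaves this implicit. Your observation $\kappa(s)\ge s/2$ for $s\ge\tau$ is a tidy alternative to the paper's shift $\kappa_m^T(s)\ge s-\tau$.

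One justification is wrong, although the bound it supports is true. On $[0,\kappa(t))$ the scales $t-s$ and $t-\kappa(s)$ are \emph{not} comparable. If $t=t_k$ is a grid point, then $\kappa(t)=t$, and for $s\in[t_{k-1},t_k)$ you have $t-\kappa(s)=\tau$ while $t-s\to 0$. The same happens whenever $t-\kappa(t)\ll\tau$, so cutting off $[\kappa(t),t]$ does not produce comparability. There are two easy repairs.

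\emph{First repair.} Cut off $[(t-\tau)\vee 0,t]$ instead. For $s<t-\tau$ you have $s-\kappa(s)<\tau<t-s$, hence $t-\kappa(s)<2(t-s)$. This is exactly the paper's case distinction $\delta(s)<t-s$ versus $\delta(s)\ge t-s$ via \eqref{eq:GdEstimates-interpoled-strong}. The cut-off piece still contributes only $C\tau^{1-\alpha/2}$.

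\emph{Second repair.} Note that the splitting argument behind \eqref{eq:AAUX} also handles mixed scales: for $0<r_1\le r_2$,
\[
r_1^{-\frac d2}r_2^{-\frac d2}\iint_{Q\times Q} e^{-c\frac{|y-x|^2}{r_1}}e^{-c\frac{|z-x|^2}{r_2}}|y-z|^{-\alpha}\,\diff y\,\diff z\le C_\alpha\, r_1^{-\frac{\alpha}{2}},
\]
obtained by putting the singularity on the narrower Gaussian. With this, \eqref{eq:GdEstimates-interpoled} applies on all of $[0,t)$ with no splitting and no comparability needed.
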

The proof of Theorem~\ref{thm:main} is postponed to Section~\ref{sec-proofs}.

Note that one obtains the same rate of convergence, arbitrarily close to $\frac12-\frac{\alpha}{4}$, as for the explicit Euler--Maruyama scheme and the semi-implicit Euler--Maruyama scheme obtained in~\cite[Theorem 3.4. part (iii)]{MR2147242}. However, in Theorem~\ref{thm:main} there is no need to impose regularity conditions on the initial value $u_0$ other than continuity.

\section{Numerical experiments}\label{sec-num}
In this section, we provide several numerical experiments in dimensions $1$ and $2$
in order to support and illustrate the theoretical results of this paper.
In addition, we shall compare the behavior of the analyzed stochastic exponential integrator~\eqref{sexp}
with the following classical integrators for SPDEs. Let us recall the notation for all of these integrators:
\begin{itemize}
\item the stochastic exponential scheme~\eqref{sexp} (denoted by \textsc{Sexp} below):
$$
\mathcal U^m_{\ell+1}=\e^{\dt\Delta}\left(\mathcal U^m_\ell+\dt b(t_\ell,\cdot,\mathcal U_\ell^m)+\sigma(t_\ell,\cdot,\mathcal U_\ell^m)\delta_\ell \Falpha\right).
$$
\item the Euler--Maruyama scheme (denoted \textsc{EM}) from \cite{MR2147242}:
$$
\mathcal U^m_{\ell+1}=\mathcal U^m_{\ell}+\dt\Delta\mathcal U^m_{\ell}+\dt b(t_\ell,\cdot,\mathcal U_\ell^m)+\sigma(t_\ell,\cdot,\mathcal U_\ell^m)\delta_\ell\Falpha.
$$
\item the semi-implicit Euler--Maruyama scheme (denoted \textsc{sEM}) from \cite{MR2147242}:
$$
\mathcal U^m_{\ell+1}=\mathcal U^m_{\ell}+\dt\Delta\mathcal U^m_{\ell+1}+ \dt b(t_\ell,\cdot,\mathcal U_\ell^m)+\sigma(t_\ell,\cdot,\mathcal U_\ell^m)\delta_\ell\Falpha.
$$
\end{itemize}
Let us recall that the noise in the parabolic SPDE~\eqref{prob} is given by a Riesz potential $f(r)=r^{-\alpha}$
with parameter $\alpha\in(0,2\wedge d)$.

We start this section by describing the finite difference discretization of the parabolic SPDE~\eqref{prob}
from \cite{MR2147242} in Subsection~\ref{sec:spaceDisc}. We then numerically illustrate the profile in Subsection~\ref{sec:prof1d},
the strong rate of convergence in Subsection~\ref{sec:strong1d}, and the computational costs
of the integrators (see Subsection~\ref{sec:cpu1d}) applied to the SPDE~\eqref{prob} in spatial dimension $d=1$.
Finally, strong rates of convergence and computational costs of
the integrators are presented for our parabolic SPDE in spatial dimension $d=2$ in Subsection~\ref{sec:strong2d},
resp. Subsection~\ref{sec:cpu2d}.

\subsection{Spatial discretization of the parabolic SPDE}\label{sec:spaceDisc}
In this subsection, we briefly recall the finite difference scheme for the parabolic SPDE~\eqref{prob} studied in \cite{MR2147242}.

Fix a positive integer $n$ and consider a uniform grid
$\frac{\uk}{n}=(\frac{k_1}{n},\ldots,\frac{k_d}{n})$ of $\overline{Q}$, where
$k_j\in\{0,\ldots,n\}$ for $1\leq j\leq d$. For any non-negative
integer $i$, set $x_i:=\frac{i}{n}$. In dimension $d=1$, a centered
finite difference approximation of the Laplacian is provided by the $(n-1)\times (n-1)$ matrix $n^2D_n$.
The matrix $D_n$ is given by
$$
D_n=
\begin{pmatrix}
-2 & 1 & 0 & \ldots & 0 \\
1 & -2 & 1 & \ddots & \vdots \\
0 & \ddots & \ddots & \ddots & 0 \\
\vdots & \ddots & 1 & -2 & 1 \\
0 & \vdots & 0 & 1 & -2
\end{pmatrix}.
$$
For $d\geq2$, the spatial approximation of the
Laplacian in the parabolic SPDE~\eqref{prob} is recursively given by
$$
D_n^{(d)}=\diag(D_n^{(d-1)})+\begin{pmatrix}-2Id_{(n-1)^{d-1}} & Id_{(n-1)^{d-1}} & 0 & \ldots & 0 \\
Id_{(n-1)^{d-1}} & -2Id_{(n-1)^{d-1}} & Id_{(n-1)^{d-1}} & \ddots & \vdots \\
0 & \ddots & \ddots & \ddots & 0 \\
\vdots & \ddots & Id_{(n-1)^{d-1}} & -2Id_{(n-1)^{d-1}} & Id_{(n-1)^{d-1}} \\
0 & \vdots & 0 & Id_{(n-1)^{d-1}} & -2Id_{(n-1)^{d-1}}\end{pmatrix},
$$
starting with $D_n^{(1)}=D_n$, where
$Id_{(n-1)^{d-1}}$ is the $(n-1)^{d-1}\times (n-1)^{d-1}$ identity
matrix and $\diag(D_n^{(d-1)})$ denotes the
$(n-1)^{d}\times(n-1)^{d}$ matrix with $d$ diagonal blocks equal to
the $(n-1)^{d-1}\times(n-1)^{d-1}$ matrix $D_n^{(d-1)}$. We denote
by $u^n(t)$ the $(n-1)^d$-dimensional vector approximating the
solution at time $t$ to the SPDE~\eqref{prob} on the grid
points. For any $k_1,\dots,k_d\in \{1,\ldots,n-1\}$, we define the
non-negative integer $\mathbf
k=(k_d-1)(n-1)^{d-1}+\ldots+(k_2-1)(n-1)+k_1$ and set
$u^n(t,{\mathbf x}_{\mathbf k}):=(u^n(t)_{\mathbf k})_{\mathbf
k=1}^{(n-1)^d}$, with ${\mathbf x}_{\mathbf
k}:=(x_{k_1},\ldots,x_{k_d})$, which corresponds to a spatial
approximation of $u(t,{\mathbf x}_{\mathbf k})$. Then, the vector
$u^n(t)$ satisfies the stochastic differential equation
\begin{align}\label{FDsde}
\diff u^n(t)=n^2D_n^{(d)}u^n(t)\diff t+b(u^n(t))\diff
t+\sigma(u^n(t))\diff \Falpha^n(t),
\end{align}
with initial values $u^n(0)_{\mathbf k}=u_0({\mathbf x}_{\mathbf
k})$. Here, the following classical convention is used: for any
function $h:\mathbb{R}\rightarrow \mathbb{R}$ and vector $v\in
\mathbb{R}^r$, we write $h(v):=(h(v_1),\dots,h(v_r))$ and interpret the last product in \eqref{FDsde} in the elementwise sense.
The vector $\Falpha^n(t)$ is given by
\[
\Falpha^n(t)_{\bold k}:= n^d \int_0^t \int_{\msquare_{{\bold x}_{\bold
k}}}\Falpha(\diff s,\diff y),
\]
where $\msquare_{{\bold x}_{\bold k}}=\{x=(x_1,\ldots,x_d)\,;\: x_{k_j}\leq x \leq x_{k_j+1},\,\forall~1\leq j\leq d\}$.

Note that, in order to apply a time integrator to the SDE~\eqref{FDsde}, one needs to know the covariance matrix of the discretized noise.
An explicit formula can be obtain in the $1$-dimensional case, see Appendix~\ref{app:1Dnoise} for details. To the best of our knowledge, this is
not the case in the $2$-dimensional case, where we employ quadrature formulas,
see Appendix~\ref{app:2Dnoise} for details.

\subsection{Time evolution and profile of the solution in dimension $\mathbf1$}\label{sec:prof1d}
We consider the semilinear stochastic heat equation \eqref{prob} on the interval $[0,1]$
with homogeneous Dirichlet boundary conditions and in the time interval $[0,0.5]$.
For this numerical experiment, we choose $b(u)=\sigma(u)=1+0.5\cos(u)$ and the initial value $u_0(x)=\sin(\pi x)$}.
We illustrate the behavior of the solution for three values of the parameter $\alpha$ of the Riesz noise: $\alpha=0.2, 0.7$ and $\alpha=1$
(in order to denote a space-time white noise (which is not a Riesz noise) as in \cite{MR2147242}).
A proof of convergence of the stochastic exponential scheme for space-time white noise is given in \cite{MR4050540} for instance.
The finite difference method is employed with $n=2^{10}$ number of grid points and the stochastic exponential integrator
with $\dt=2^{-16}$. We display the time evolution as well as a profile of the numerical solution at $T_{end}=0.5$
in Figure~\ref{fig:evo}. In this figure, one can observe that the spatial and temporal regularity depends on $\alpha$,
see Proposition~\ref{prop:reg}.

\begin{figure}[h]
\begin{subfigure}{.5\textwidth}
 \centering
 \includegraphics[width=.5\linewidth]{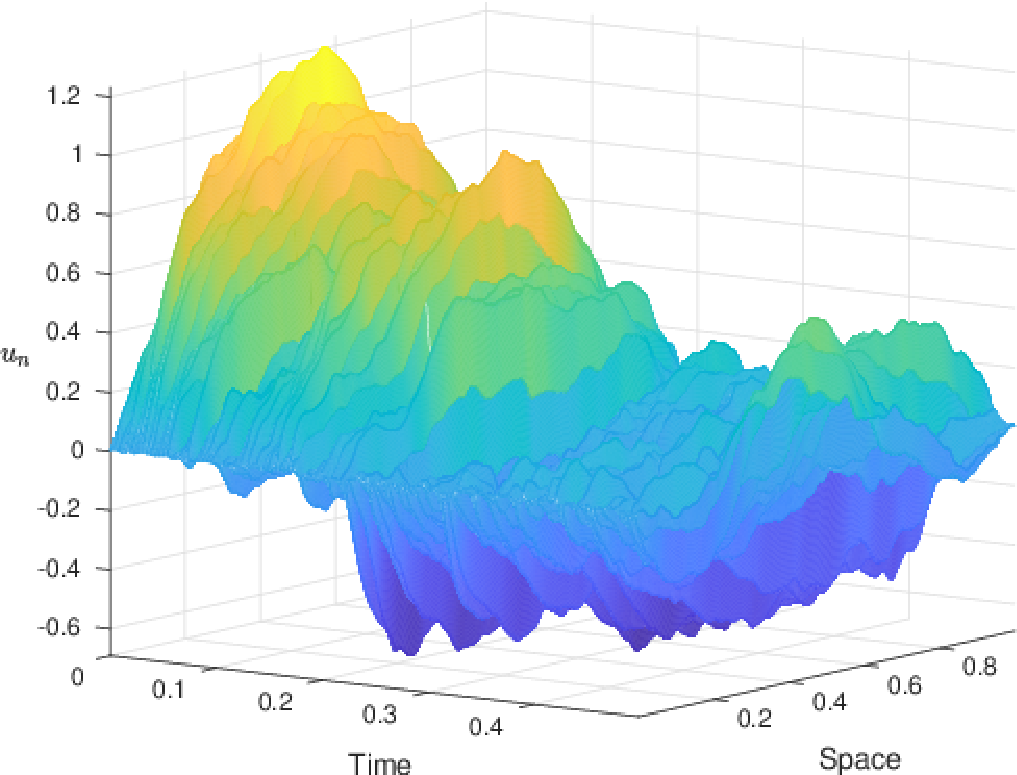}
 \caption*{$\alpha=0.2$}
\end{subfigure}%
\begin{subfigure}{.5\textwidth}
 \centering
 \includegraphics[width=.5\linewidth]{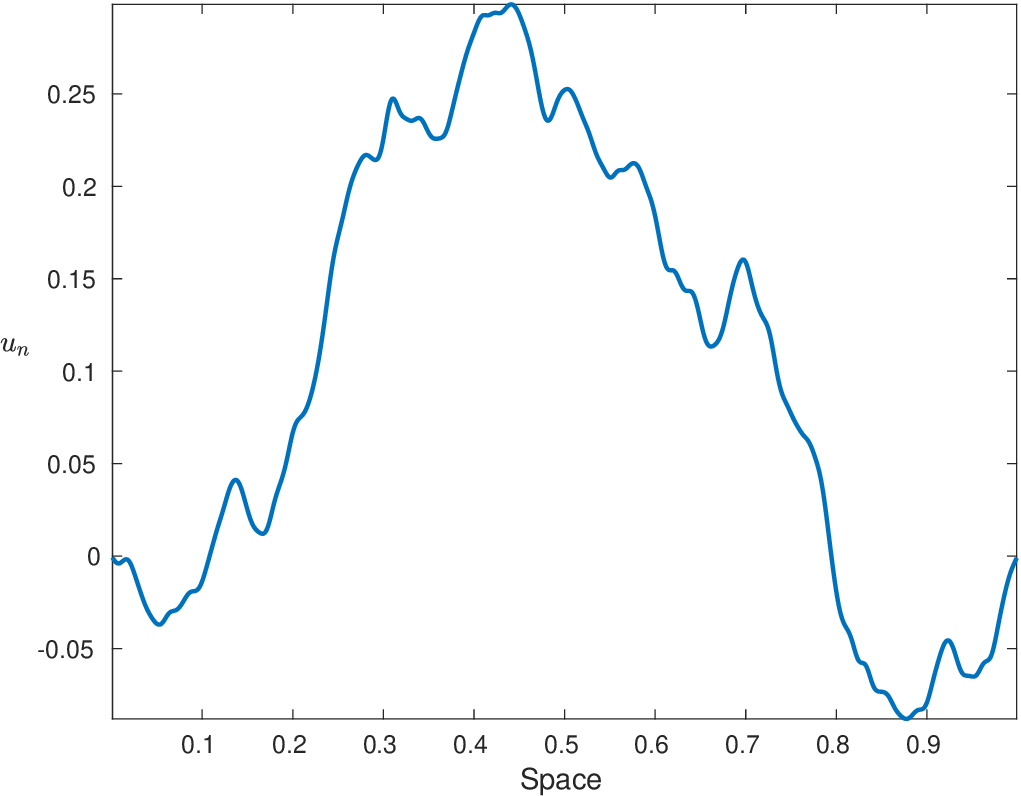}
 \caption*{$\alpha=0.2$}
\end{subfigure}
\begin{subfigure}{.5\textwidth}
 \centering
 \includegraphics[width=.5\linewidth]{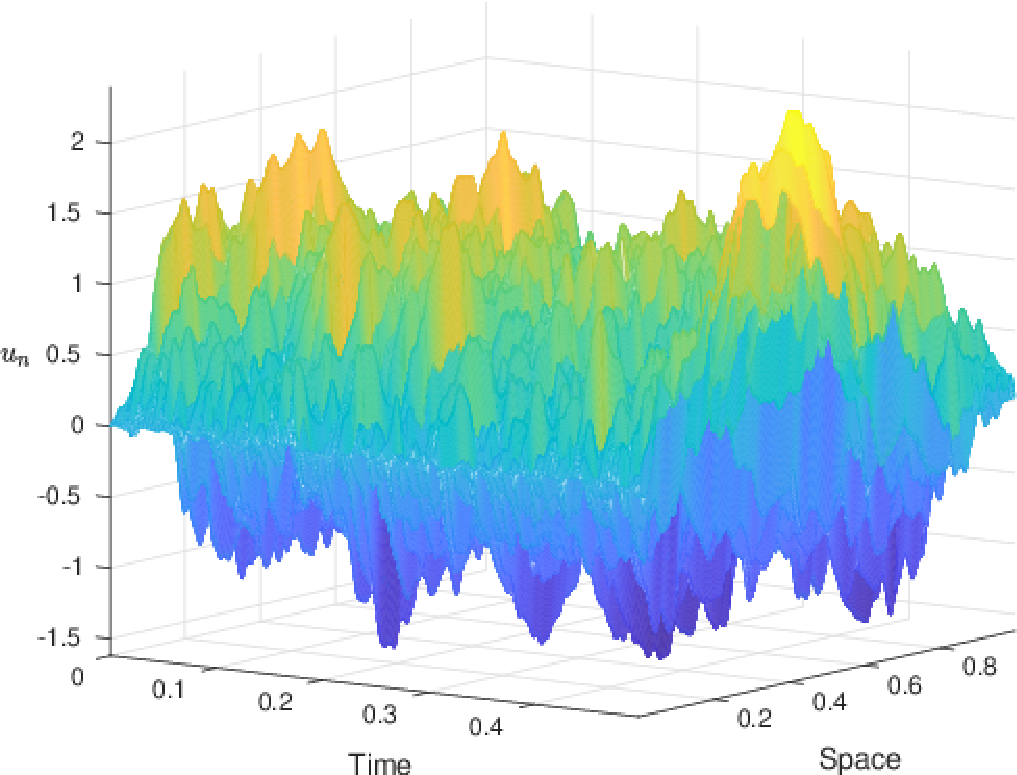}
 \caption*{$\alpha=0.7$}
\end{subfigure}%
\begin{subfigure}{.5\textwidth}
 \centering
 \includegraphics[width=.5\linewidth]{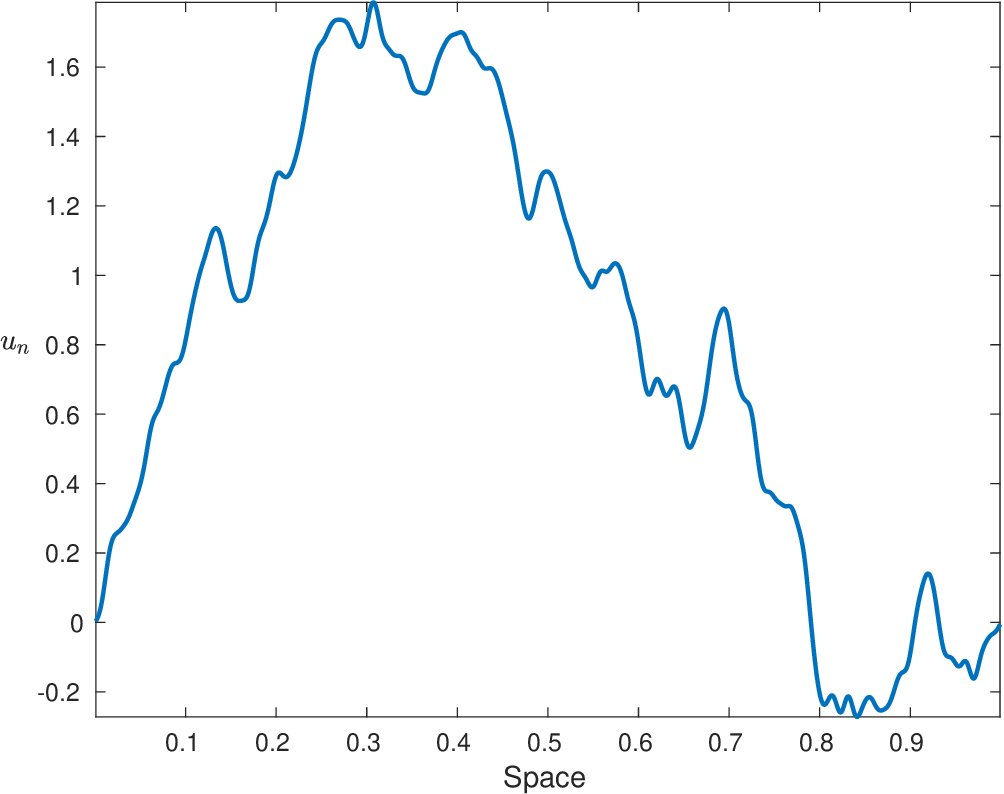}
 \caption*{$\alpha=0.7$}
\end{subfigure}
\begin{subfigure}{.5\textwidth}
 \centering
 \includegraphics[width=.5\linewidth]{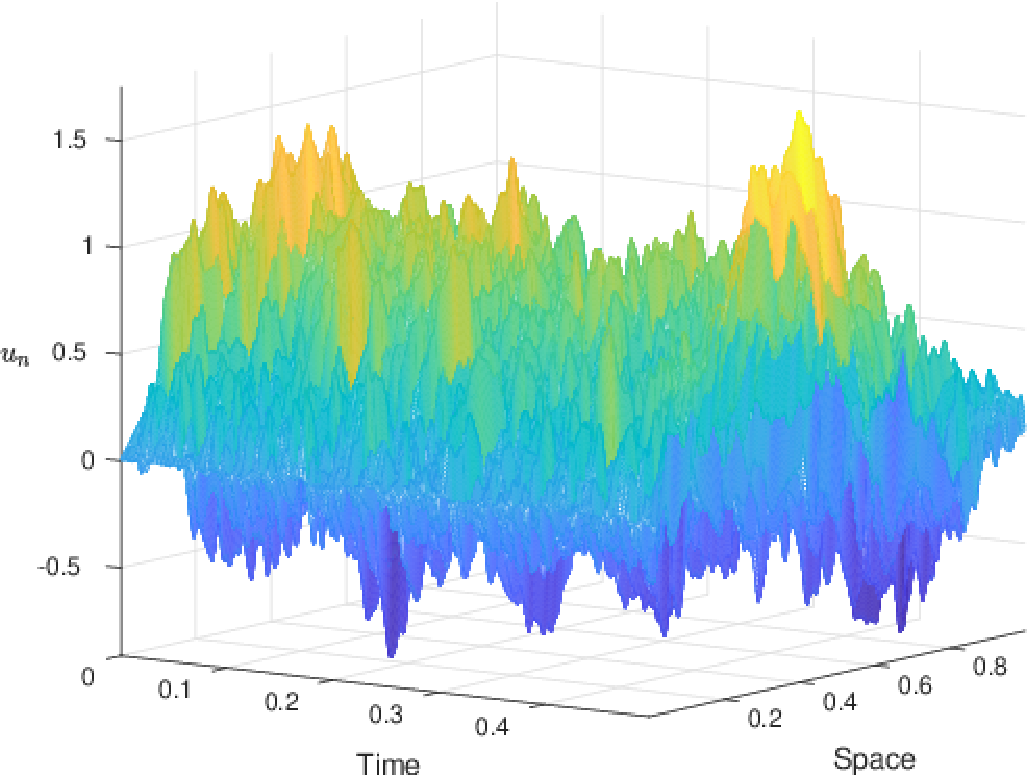}
 \caption*{$\alpha=1$}
\end{subfigure}%
\begin{subfigure}{.5\textwidth}
\centering
 \includegraphics[width=.5\linewidth]{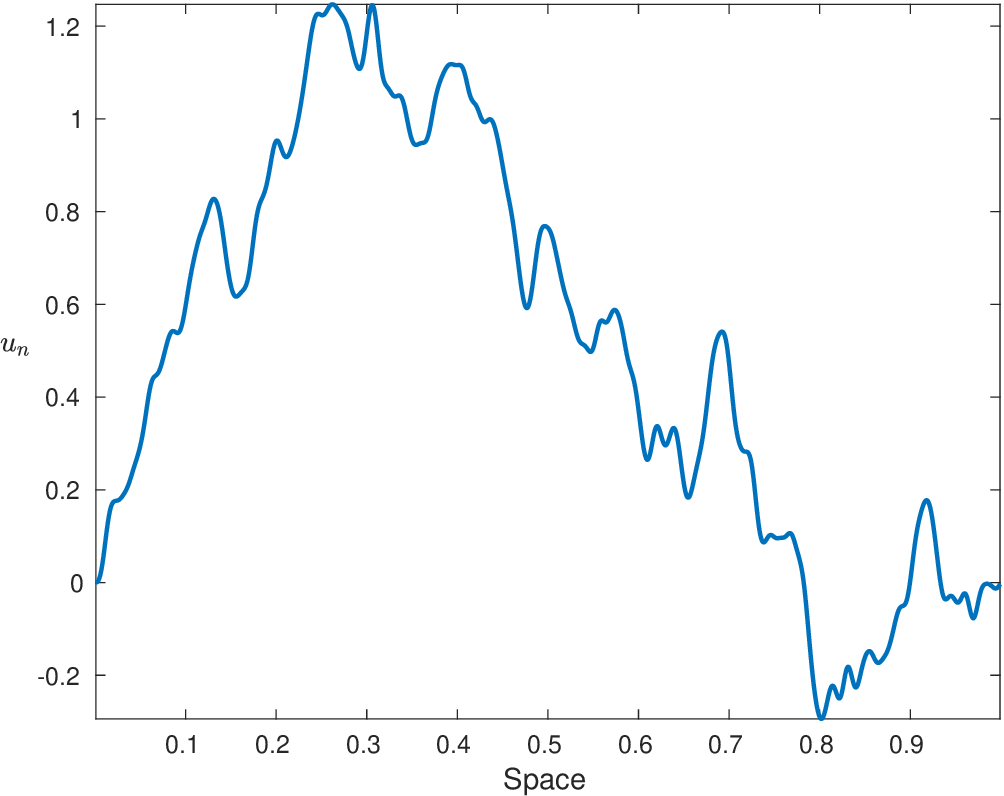}
 \caption*{$\alpha=1$}
\end{subfigure}
\caption{Time evolution (left) and profile at $T_{end}=0..5$ (right) for different values of the parameter $\alpha$.}
\label{fig:evo}
\end{figure}

\subsection{Strong convergence in dimension $\mathbf1$}\label{sec:strong1d}
In this subsection we illustrate the strong convergence of the stochastic exponential integrator~\eqref{sexp}
as stated in Theorem~\ref{thm:main}.

We consider the semilinear SPDE~\eqref{prob} for the time interval $[0,0.5]$.
We consider $b(u)=\sigma(u)=1+0.5\cos(u)$ and the initial value $u_0(x)=\sin(\pi x)$.
The parameter for the Riesz noise is set to be $\alpha=0.7$. In addition, we use $100$ samples
to approximate the expectations. We have verified that this is enough for the Monte Carlo errors to be negligible.
We apply the finite difference method with $n=2^{9}$ and the stochastic exponential integrator
with $\dt_{ref}=2^{-20}$ to produce the reference solution $u_{ref}$.
The strong errors
$$
\sup_{(t_j,x_k)\in[0,0.5]\times[0,1]}\E\left[\abs{u^{j,k}-u_{ref}(t_j,x_k)}^2\right]
$$
of the considered integrators are presented in Figure~\ref{fig:strong} (left). Here, $u^{j,k}$ denotes a numerical approximation
of $u(t_j,x_k)$. An order of convergence $1-\alpha/2=0.65$ for the proposed time integrator~\eqref{sexp} is observed.
This is in agreement with the results of Theorem~\ref{thm:main}.

We now illustrate the dependence of the order of convergence of the stochastic exponential integrator with respect to the parameter $\alpha$
and consider a smoother noise with the parameter $\alpha=0.2$
(the other parameters for this simulation are as above). The results are presented in Figure~\ref{fig:strong} (right).
Strong order of convergence $1-\alpha/2=0.9$ is observed for the stochastic exponential integrator~\eqref{sexp}.
This is in agreement with Theorem~\ref{thm:main}. Figure~\ref{fig:strong} also illustrates the rates of convergence
of the semi-implicit Euler--Maruyama scheme, as proved in the work \cite{MR2147242}, as well as the (well-known)
fact that the classical Euler--Maruyama scheme has a severe step size restriction when applied to (S)PDEs.

\begin{figure}[h]
\begin{subfigure}{.5\textwidth}
\centering
\includegraphics[width=1.\linewidth]{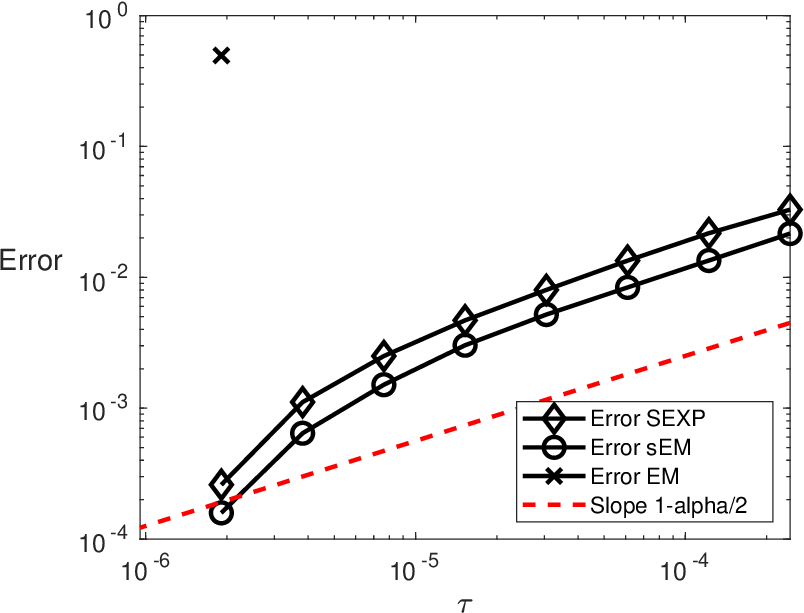}
\caption{$\alpha=0.7$}
\end{subfigure}%
\begin{subfigure}{.5\textwidth}
\centering
\includegraphics[width=1.\linewidth]{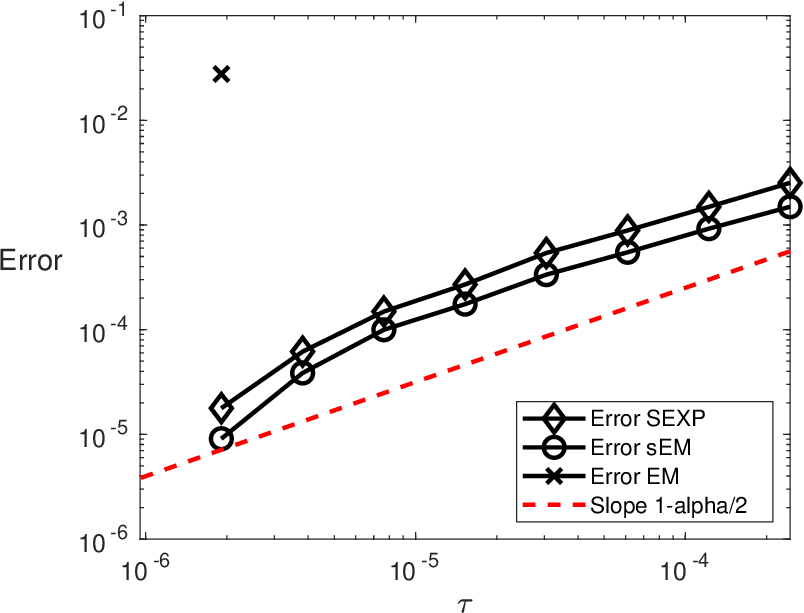}
\caption{$\alpha=0.2$}
\end{subfigure}
\caption{Strong errors for the SPDE~\eqref{prob} with Riesz potential $f(r)=r^{-\alpha}$.}
\label{fig:strong}
\end{figure}

We conclude this section by further illustrating the dependence of the strong order of convergence of the proposed time integrator with respect
to the parameter of the noise $\alpha$. We perform another numerical experiment with the following parameters: $T=0.5$, $u_0(x)=\sin(\pi x)$, $b(u)=2+\sin(u)$, $\sigma(u)=u+2$, $n=2^8$, $\dt_{ref}=2^{-20}$, $\dt=2^{-9},\ldots,2^{-16}$, $150$ Monte Carlo samples (this is enough for the Monte Carlo errors to be negligible), and $\alpha=0.2, 0.4, 0.6, 0.8$. The results are presented in Figure~\ref{fig:strong2} and
further illustrate Theorem~\ref{thm:main}.

\begin{figure}[h]
\includegraphics[width=.5\linewidth]{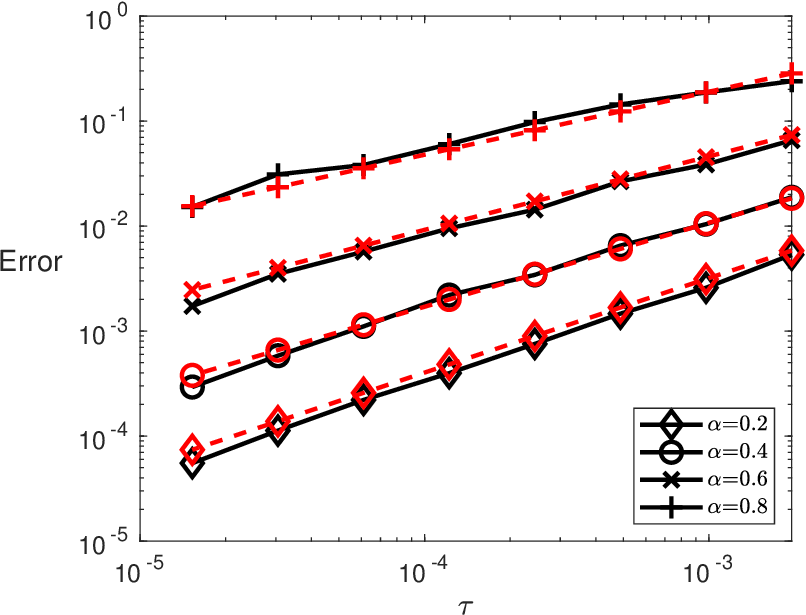}
\caption{Strong errors for the SPDE~\eqref{prob} with Riesz potential $f(r)=r^{-\alpha}$ for $\alpha=0.2,0.4,0.6,0.8$.}
\label{fig:strong2}
\end{figure}

\subsection{Computational times in dimension $\mathbf1$}\label{sec:cpu1d}
In this subsection we compare the computational cost of the above time integrators.
To this end, we consider the SPDE~\eqref{prob}. We run $50$ samples for each numerical scheme.
For each numerical integrator and each sample,
we use several time-step sizes ($2^{-\ell} \dt_{ref}$ for $\ell=1, 2, \ldots, 10$) and compare the strong error at the final time $T=0.75$ with a reference solution computed with the same sample of the noise and by the same numerical scheme for the time-step size $\dt_{ref}=2^{-20}$.
Figure~\ref{fig:compcos} displays the total computational time for all the samples,
for each numerical scheme and each time-step size, as a function of the averaged final error.
In this figure, one observes better performance for the stochastic exponential integrator~\eqref{sexp}
than for the two classical time integrators from \cite{MR2147242}. In addition, one can observe a time-step restriction
for the explicit Euler--Maruyama scheme (top right of the figure).

\begin{figure}[h]
\centering
\includegraphics*[height=5cm,keepaspectratio]{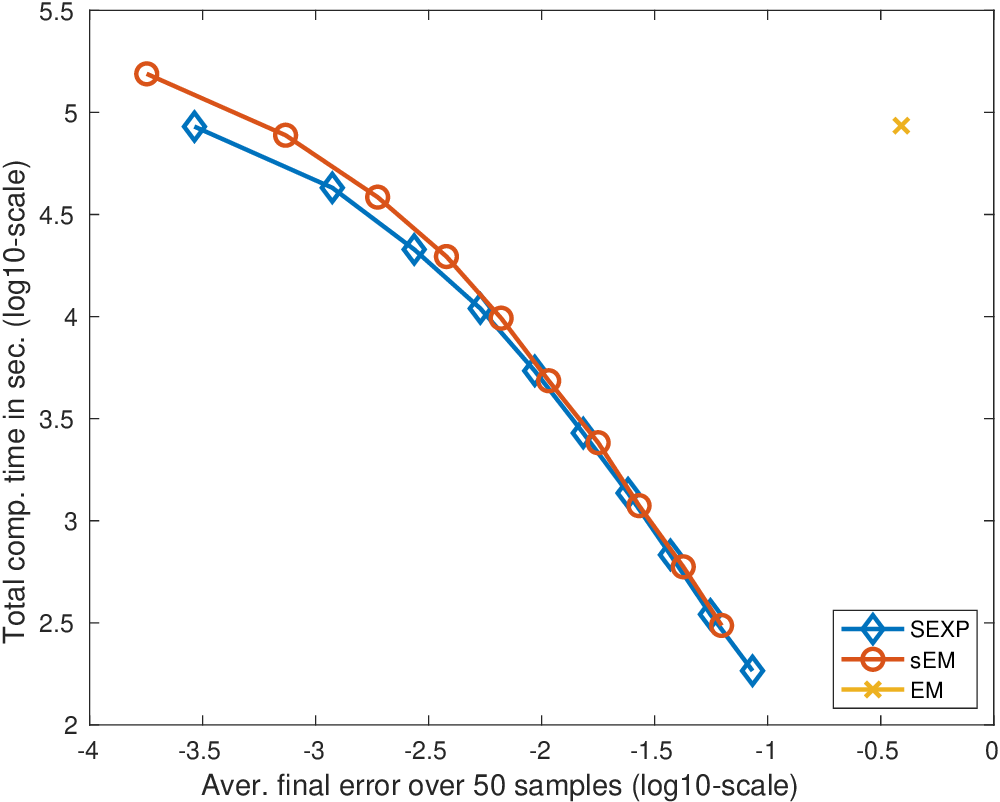}
\caption{Computational time as a function of the averaged final error for the three numerical methods.}
\label{fig:compcos}
\end{figure}

\subsection{Strong convergence in dimension $\mathbf2$}\label{sec:strong2d}
In this subsection, we confirm the theoretical result in Theorem~\ref{thm:main} in dimension $d=2$. We consider the semilinear SPDE~\eqref{prob} on the spatial domain $[0,1]^{2}$ with homogeneous Dirichlet boundary conditions and for the time interval $[0,1]$. We consider $b(u)=\sigma(u)=1+\cos(u)$ and the initial value $u_{0}(x,y) = \sin (2 \pi x) \sin (2 \pi y)$ for $x,y \in [0,1]$.

We compute the strong errors
$$
\left( \sup_{(t_k,x_i,x_{j})\in[0,1]\times[0,1]\times[0,1]}\E\left[\abs{u^{k,i,j}-u_{ref}(t_k,x_i,x_{j})}^2\right] \right)^{1/2},
$$
where the reference solution $u_{ref}$ is computed using the stochastic exponential integrator with temporal discretization size
$\tau_{ref} = 2^{-13}$. We use $100$ samples to approximate the expectation in the strong errors and
we have verified that this is enough for the Monte Carlo
errors to be negligible.

In the first numerical experiment, we fix the space discretization parameter to $n=2^{6}=64$ and take the following values of the noise correlation parameter $\alpha=0.2, 0.4, 0.6, 0.8, 1, 1.2, 1.4, 1.6, 1.8$. As is seen in Figures~\ref{fig:N64_alpha1}~and~\ref{fig:N64_alpha2}, the decay of the strong errors follow the reference lines with slopes $1/2-\alpha/4$. This confirms the results derived in Theorem~\ref{thm:main}.

\begin{figure}[h]
\centering
\includegraphics*[height=5cm,keepaspectratio]{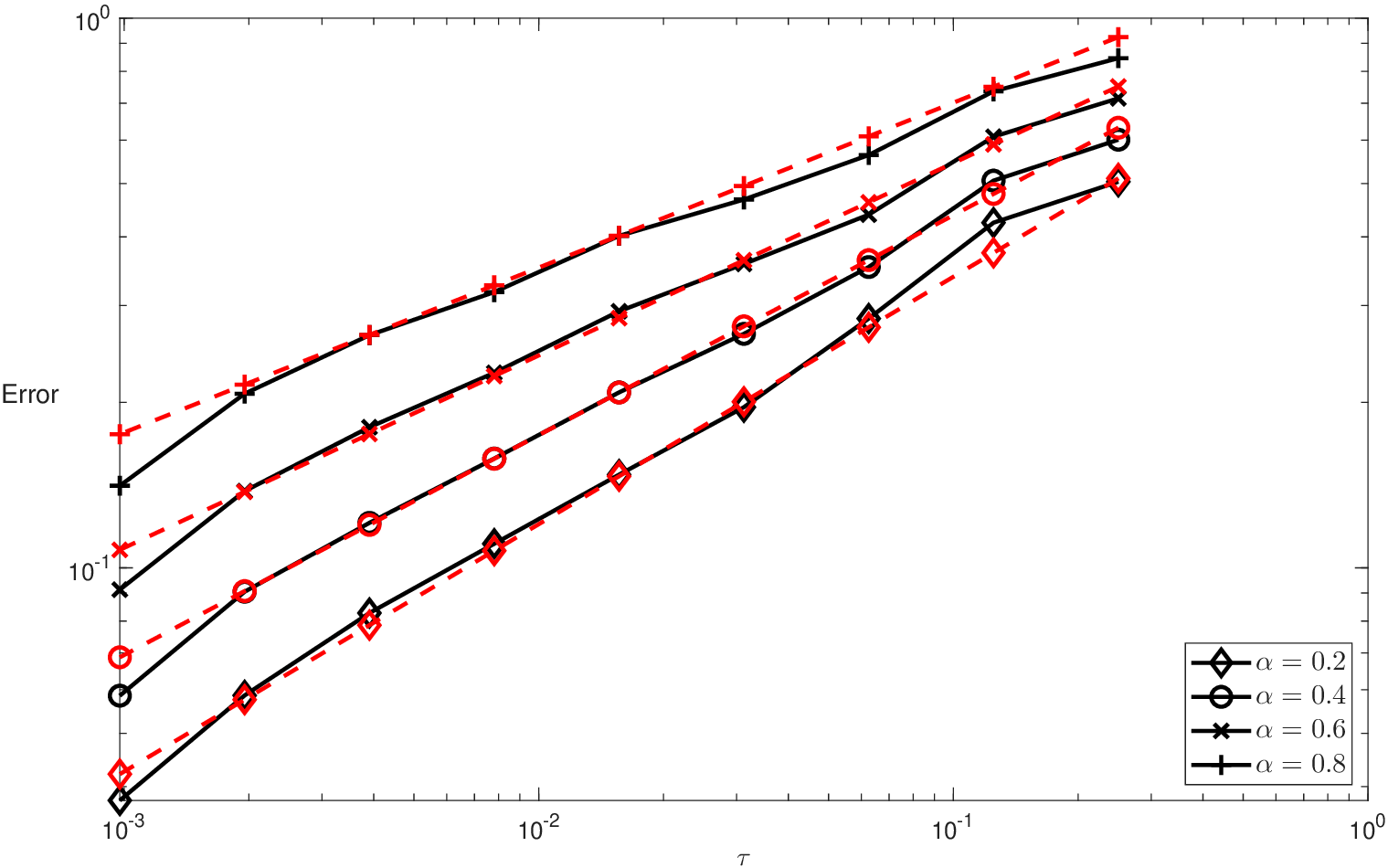}
\caption{Root mean square convergence of the stochastic exponential Euler scheme~\eqref{sexp} for $\alpha = 0.2, 0.4, 0.6, 0.8$ and for $n = 64$ space grid point in each direction.}
\label{fig:N64_alpha1}
\end{figure}

\begin{figure}[h]
\centering
\includegraphics*[height=5cm,keepaspectratio]{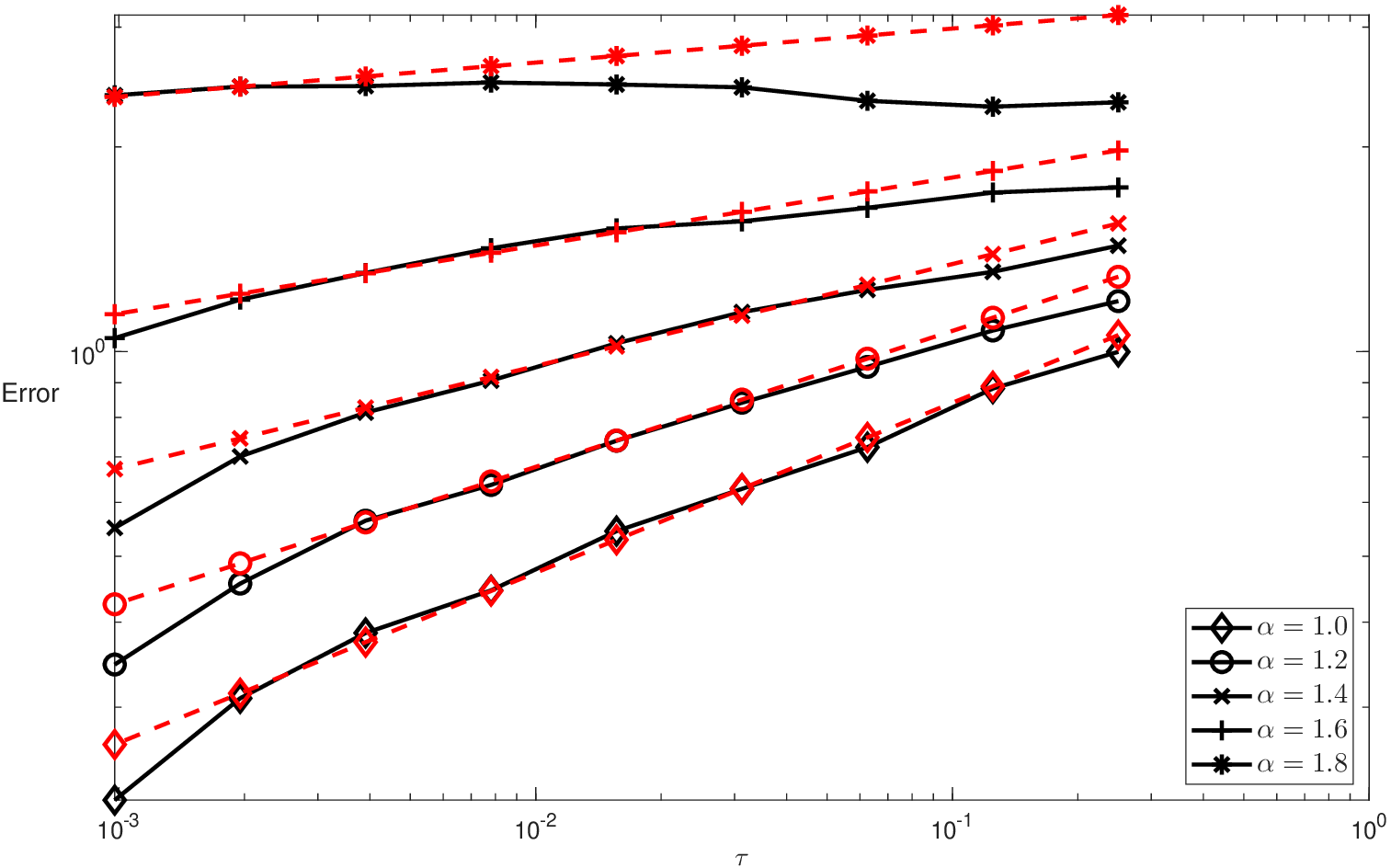}
\caption{Root mean square convergence of the stochastic exponential Euler scheme~\eqref{sexp} for $\alpha = 1, 1.2, 1.4, 1.6, 1.8$ and for $n = 64$ space grid points.}
\label{fig:N64_alpha2}
\end{figure}

In the second numerical experiment, presented in Figure~\ref{fig:alpha8_Nb}, we fix the noise parameter to $\alpha = 0.8$ and use different values of the space discretization parameter $n = 16, 32, 64$ to confirm that the error in Theorem~\ref{thm:main} is independent of $n$. In addition, we show that the convergence rate agrees with the derived theoretical rate $1/2-\alpha/4$.

\begin{figure}[h]
\centering
\includegraphics*[height=5cm,keepaspectratio]{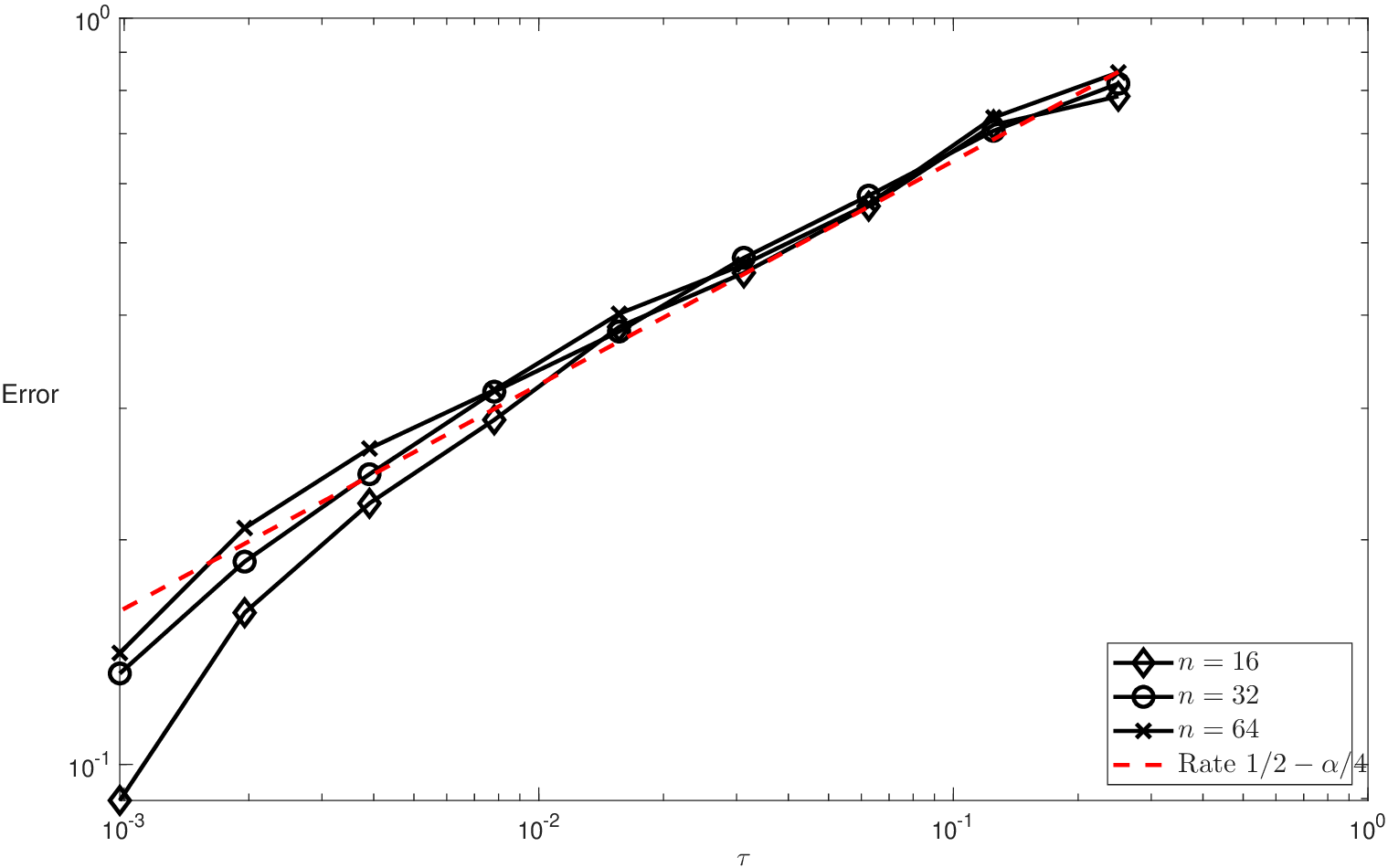}
\caption{Root mean square convergence of the stochastic exponential Euler scheme~\eqref{sexp} for $\alpha = 0.8$ and for $n = 16,32,64$ space grid points.}
\label{fig:alpha8_Nb}
\end{figure}
\
\subsection{Computational times in dimension $\mathbf2$}\label{sec:cpu2d}
In this subsection we compare the computational costs of the stochastic exponential Euler scheme and of the semi-implicit Euler--Maruyama scheme
in dimension $d=2$. We consider the SPDE~\eqref{prob} with the same parameters as in Subsection~\ref{sec:strong2d}. We apply the finite difference discretization with mesh size $n=2^5=32$ and these two integrators with time-step sizes $2^{-l}$ for $l=2,\ldots,2^{18}$. The strong error is
computed using $\tau_{ref}=2^{-19}$ for the reference solution and using $100$ Monte Carlo samples to approximate the expectations.

Figure~\ref{fig:compcos2d} displays the total computational time for the \textsc{SEXP} and \textsc{sEM} schemes for all samples;
meaning that, only the parts of the implementations of the \textsc{SEXP} and \textsc{sEM} schemes that are different are included in the elapsed times. In this figure, one can observe better performance for the \textsc{SEXP} scheme compared to the \textsc{sEM} scheme
in the regime of small errors.

\begin{figure}[h]
\centering
\includegraphics*[height=5cm,keepaspectratio]{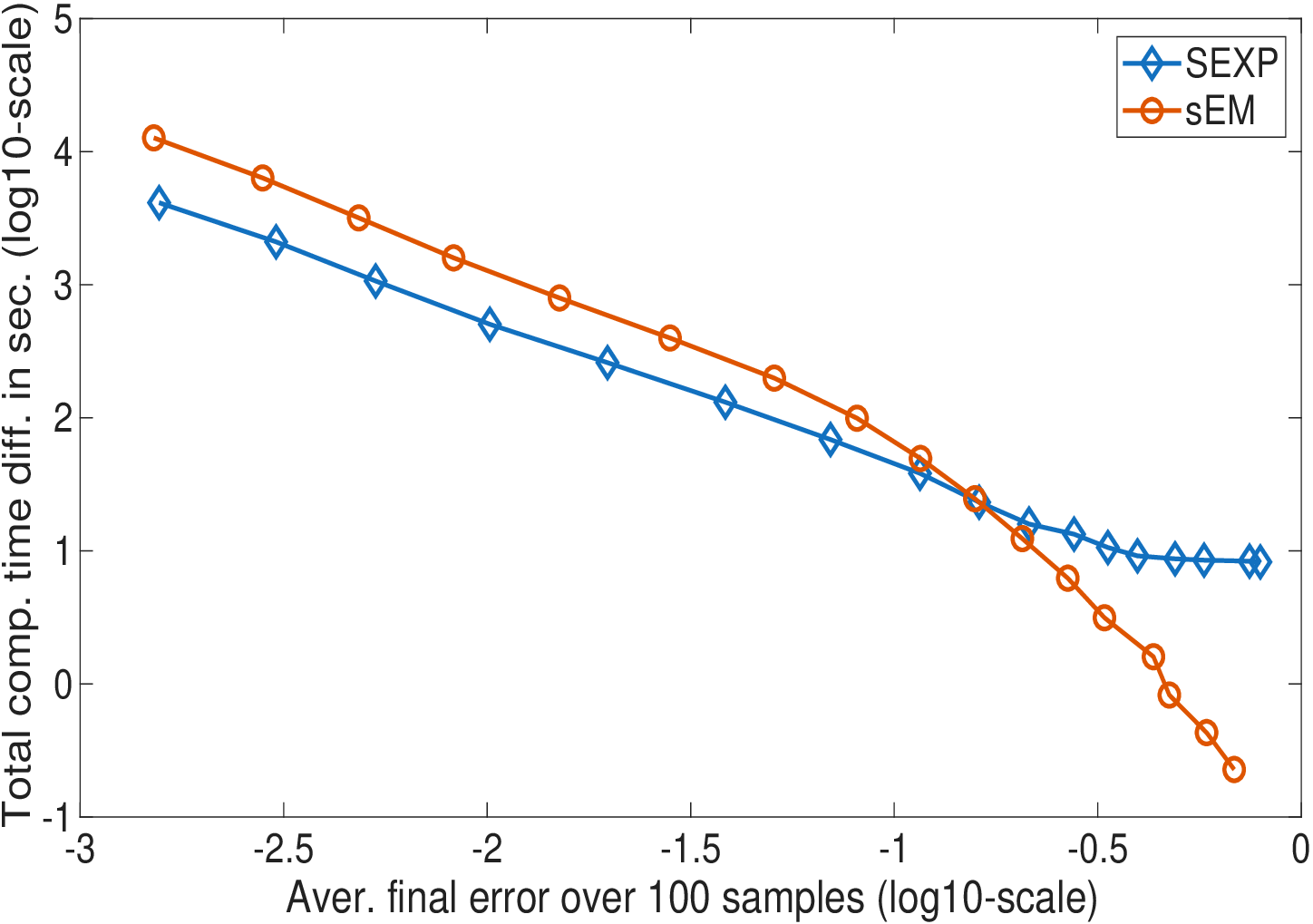}
\caption{Computational time as a function of the averaged final error for the two time integrators \textsc{sEM} and \textsc{SEXP} in dimension $2$.}
\label{fig:compcos2d}
\end{figure}

\section{Proof of the main result}\label{sec-proofs}
This section gives the proof of Theorem~\ref{thm:main}.

\begin{proof}
Recall that the random field $u$ is the mild solution given by~\eqref{mild} to the stochastic partial differential equation~\eqref{prob}, and that, for all $m\in\N$, the random field $u^m$ is the numerical solution given by the stochastic exponential integrator~\eqref{timeapp} with time-step size $\tau=T/m$.

For all $p\in[1,\infty)$, define
\begin{align*}
\mathcal{E}_{2p}^m(t,x)&=\vvvert u(t,x)-u^m(t,x)\vvvert_{2p}=\bigl(\E[|u(t,x)-u^m(t,x)|^{2p}]\bigr)^{\frac{1}{2p}},\quad \forall~(t,x)\in[0,T]\times \overline{Q},\\
\overline{\mathcal{E}_{2p}^m}(t)&=\underset{x\in \overline{Q}}\sup~\mathcal{E}_{2p}^m(t,x),\quad \forall~t\in[0,T].
\end{align*}

We consider the following decomposition of the error $u(t,x)-u^m(t,x)$ for all $t\in[0,T]$ and $x\in \overline{Q}$: one has
\begin{align*}
u(t,x)-u^m(t,x)&=A_1^m(t,x)+A_2^m(t,x)+A_3^m(t,x)+A_4^m(t,x)\\
&+B_1^m(t,x)+B_2^m(t,x)+B_3^m(t,x)+B_4^m(t,x),
\end{align*}
where the error terms $A_j^m(t,x)$, for $j\in\{1,\ldots,4\}$, are defined as
\begin{align*}
A_1^m(t,x)&=\int_0^t \int_Q \left[G_d(t-s,x,y)-G_d(t-\kappa_m^T(s),x,y)\right]b(s,y,u(s,y)) \diff y \diff s,\\
A_2^m(t,x)&=\int_0^t \int_Q G_d(t-\kappa_m^T(s),x,y) \left[b(s,y,u(s,y))-b(\kappa_m^T(s),y,u(s,y))\right]\diff y \diff s,\\
A_3^m(t,x)&=\int_0^t \int_Q G_d(t-\kappa_m^T(s),x,y) \left[b(\kappa_m^T(s),y,u(s,y))-b(\kappa_m^T(s),y,u(\kappa_m^T(s),y))\right]\diff y \diff s,\\
A_4^m(t,x)&=\int_0^t \int_Q G_d(t-\kappa_m^T(s),x,y) \left[b(\kappa_m^T(s),y,u(\kappa_m^T(s),y))-b(\kappa_m^T(s),y,u^m(\kappa_m^T(s),y))\right]\diff y \diff s,
\end{align*}
and the error terms $B_j^m(t,x)$, for $j\in\{1,\ldots,4\}$, are defined as
\begin{align*}
B_1^m(t,x)&=\int_0^t \int_Q \left[G_d(t-s,x,y)-G_d(t-\kappa_m^T(s),x,y)\right]\sigma(s,y,u(s,y)) \Falpha(\diff s,\diff y),\\
B_2^m(t,x)&=\int_0^t \int_Q G_d(t-\kappa_m^T(s),x,y) \left[\sigma(s,y,u(s,y))-\sigma(\kappa_m^T(s),y,u(s,y))\right]\Falpha(\diff s,\diff y),\\
B_3^m(t,x)&=\int_0^t \int_Q G_d(t-\kappa_m^T(s),x,y) \left[\sigma(\kappa_m^T(s),y,u(s,y))-\sigma(\kappa_m^T(s),y,u(\kappa_m^T(s),y))\right]\Falpha(\diff s,\diff y),\\
B_4^m(t,x)&=\int_0^t \int_Q G_d(t-\kappa_m^T(s),x,y) \left[\sigma(\kappa_m^T(s),y,u(\kappa_m^T(s),y))-\sigma(\kappa_m^T(s),y,u^m(\kappa_m^T(s),y))\right]\Falpha(\diff s,\diff y).
\end{align*}
For all $j\in\{1,\ldots,4\}$ and all $t\in[0,T]$, define
\begin{align*}
&\mathcal{A}_{j,2p}^m(t,x)=\vvvert A_j^m(t,x) \vvvert_{2p},\quad \mathcal{B}_{j,2p}^m(t,x)=\vvvert B_j^m(t,x) \vvvert_{2p},\qquad \forall~x\in Q,\\
&\overline{\mathcal{A}_{j,2p}^m}(t)=\underset{x\in \overline{Q}}\sup~\mathcal{A}_{j,2p}^m(t,x),\quad \overline{\mathcal{B}_{j,2p}}^m(t)=\underset{x\in \overline{Q}}\sup~\mathcal{B}_{j,2p}^m(t,x).
\end{align*}
Applying the Minkowski inequality, one has
\begin{align*}
\mathcal{E}_{2p}^{m}(t,x)&\le \sum_{j=1}^{4}\mathcal{A}_{j,2p}^m(t,x)+\sum_{j=1}^{4}\mathcal{B}_{j,2p}^m(t,x),\qquad \forall~(t,x)\in[0,T]\times Q,\\
\overline{\mathcal{E}}_{2p}^m(t)&\le \sum_{j=1}^{4}\overline{\mathcal{A}_{j,2p}^m}(t)+\sum_{j=1}^{4}\overline{\mathcal{B}_{j,2p}^m}(t),\qquad \forall~t\in[0,T].
\end{align*}
The proof proceeds by proving upper bounds for each error term $\overline{\mathcal{A}_{j,2p}^m}(t)$ and $\overline{\mathcal{B}_{j,2p}^m}(t)$ with $j\in\{1,\ldots,4\}$.

In the sequel, the value of the parameter $\gamma\in(\frac12-\frac{\alpha}{4})$ is fixed.
Note that for all $s\in[0,T]$ one has $\kappa_m^T(s)\le s$ and $|s-\kappa_m^T(s)|\le \dt$.

\medskip

$\bullet$ Treatment of the error term $\overline{\mathcal{A}_{1,2p}^m}(t)$.

Applying the Minkowski inequality, for all $(t,x)\in[0,T]\times \overline{Q}$, one has
\begin{align*}
\vvvert A_1^m(t,x) \vvvert_{2p} &\le \int_0^t \int_Q \big|G_d(t-s,x,y)-G_d(t-\kappa_m^T(s),x,y)\big| \vvvert b(s,y,u(s,y)) \vvvert_{2p} \diff y \diff s\\
&\le \underset{s\in[0,T]}\sup~\underset{y\in \overline{Q}}\sup~\vvvert b(s,y,u(s,y)) \vvvert_{2p}\int_0^t \int_Q \big|G_d(t-s,x,y)-G_d(t-\kappa_m^T(s),x,y)\big| \diff y \diff s.
\end{align*}
On the one hand, owing to the condition~\eqref{LG} from Assumption~\ref{ass:coeff}, the mapping $b$ has at most linear growth. Applying the moment bounds~\eqref{eq:momo-u} from Proposition~\ref{prop:uExistence} gives the upper bounds
\[
\underset{s\in[0,T]}\sup~\underset{y\in \overline{Q}}\sup~\vvvert b(s,y,u(s,y)) \vvvert_{2p}\le C\bigl(1+\underset{s\in[0,T]}\sup~\underset{y\in \overline{Q}}\sup~\vvvert u(s,y)\vvvert_{2p}\bigr)\le C_p(T)(1+\norm{u_0}_\infty).
\]
On the other hand, applying the inequality~\eqref{eq:GdEstimates-interpoled} on the heat kernel (see Subsection~\ref{sec:kernel}), one obtains the upper bounds
\begin{align*}
\int_0^t \int_Q \big|G_d(t-s,x,y)-G_d(t-\kappa_m^T(s),x,y)\big| \diff y \diff s&\le C\int_0^t \frac{|s-\kappa_m^T(s)|^\gamma}{(t-s)^\gamma}\int_Q (t-s)^{-\frac{d}{2}}e^{-c\frac{|y-x|^2}{2(t-s)}} \diff y \diff s \\
&+C\int_0^t \frac{|s-\kappa_m^T(s)|^\gamma}{(t-s)^\gamma}\int_Q (t-\kappa_m^T(s))^{-\frac{d}{2}}e^{-c\frac{|y-x|^2}{2(t-\kappa_m^T(s))}} \diff y \diff s\\
&\le C\int_0^t \frac{\dt^\gamma}{(t-s)^\gamma} \diff s\\
&\le C_\gamma(T) \dt^\gamma.
\end{align*}
As a result, for the error term $\overline{\mathcal{A}_{1,2p}^m}(t)$, one obtains the following upper bound: there exists $C_{p,\gamma}(T)\in(0,\infty)$ such that one has
\begin{equation}\label{eq:errorA1}
\underset{t\in[0,T]}\sup~\overline{\mathcal{A}_{1,2p}^m}(t)\le C_{p,\gamma}(T)(1+\norm{u_0}_\infty)\dt^\gamma.
\end{equation}

\medskip

$\bullet$ Treatment of the error term $\overline{\mathcal{A}_{2,2p}^m}(t)$.

Applying the Minkowski inequality, for all $(t,x)\in[0,T]\times \overline{Q}$, one has
\[
\vvvert A_2^m(t,x) \vvvert_{2p} \le \int_0^t \int_Q G_d(t-\kappa_m^T(s),x,y) \vvvert b(s,y,u(s,y))-b(\kappa_m^T(s),y,u(s,y)) \vvvert_{2p} \diff y \diff s.
\]
Owing to the condition~\eqref{L} from Assumption~\ref{ass:coeff} and to the assumption $\gamma\in(0,\frac12-\frac{\alpha}{4})$, the mapping $s\in[0,T]\mapsto b(s,y,u)$ satisfies a H\"older continuity property with exponent $\frac12-\frac{\alpha}{4}$, uniformly with respect to $(y,u)\in\R^2$. Therefore, one has
\begin{align*}
\vvvert A_2^m(t,x) \vvvert_{2p}&\le C \int_0^t \int_Q G_d(t-\kappa_m^T(s),x,y) |s-\kappa_m^T(s)|^{\frac12-\frac{\alpha}{4}} \diff y \diff s\\
&\le C \dt^{\frac12-\frac{\alpha}{4}} \int_0^t \int_Q G_d(t-\kappa_m^T(s),x,y) \diff y \diff s.
\end{align*}
Applying the property~\eqref{eq:Gdint} of the heat kernel $G_d$ (see Subsection~\ref{sec:kernel}), one obtains
\[
\vvvert A_2^m(t,x) \vvvert_{2p} \le C T \dt^{\frac12-\frac{\alpha}{4}}.
\]
Recall that $\gamma\in(0,\frac12-\frac{\alpha}{4})$. As a result, for the error term $\overline{\mathcal{A}_{2,2p}^m}(t)$, one obtains the following upper bound: there exists $C_{p,\gamma}(T)\in(0,\infty)$ such that one has
\begin{equation}\label{eq:errorA2}
\underset{t\in[0,T]}\sup~\overline{\mathcal{A}_{2,2p}^m}(t)\le C_{p,\gamma}(T) \dt^\gamma.
\end{equation}

\medskip

$\bullet$ Treatment of the error term $\overline{\mathcal{A}_{3,2p}^m}(t)$.

Applying the Minkowski inequality, for all $(t,x)\in[0,T]\times \overline{Q}$, one has
\[
\vvvert A_3^m(t,x) \vvvert_{2p} \le \int_0^t \int_Q G_d(t-\kappa_m^T(s),x,y) \vvvert b(\kappa_m^T(s),y,u(s,y))-b(\kappa_m^T(s),y,u(\kappa_m^T(s),y))\vvvert_{2p}\diff y \diff s.
\]
Owing to the condition~\eqref{L} from Assumption~\ref{ass:coeff}, the mapping $u\mapsto b(s,y,u)$ satisfies a global Lipschitz continuity property. Therefore, applying the property~\eqref{eq:Gdint} of the heat kernel $G_d$ (see Subsection~\ref{sec:kernel}), one has
\begin{align*}
\vvvert A_3^m(t,x) \vvvert_{2p} &\le C \int_0^t \int_Q G_d(t-\kappa_m^T(s),x,y) \vvvert u(s,y)-u(\kappa_m^T(s),y)\vvvert_{2p}\diff y \diff s\\
&\le C \int_0^t \underset{y\in \overline{Q}}\sup~\vvvert u(s,y)-u(\kappa_m^T(s),y)\vvvert_{2p} \int_Q G_d(t-\kappa_m^T(s),x,y) \diff y \diff s\\
&\le C\int_0^t \underset{y\in \overline{Q}}\sup~\vvvert u(s,y)-u(\kappa_m^T(s),y)\vvvert_{2p} \diff s.
\end{align*}
To proceed, one needs to apply the regularity property~\eqref{eq:reg} from Proposition~\ref{prop:reg} of the solution $u$. However, since $\kappa_m^T(s)=0$ for all $s\in[0,\dt)$, it is not applicable on the time interval $[0,\min(t,\dt))$. The integral is then decomposed as
\begin{align*}
\int_0^t \underset{y\in \overline{Q}}\sup~\vvvert u(s,y)-u(\kappa_m^T(s),y)\vvvert_{2p} \diff s&=\int_0^{\min(t,\tau)} \underset{y\in \overline{Q}}\sup~\vvvert u(s,y)-u(\kappa_m^T(s),y)\vvvert_{2p} \diff s\\
&+\mathds{1}_{t>\tau}\int_{\tau}^t \underset{y\in \overline{Q}}\sup~\vvvert u(s,y)-u(\kappa_m^T(s),y)\vvvert_{2p} \diff s.
\end{align*}
On the one hand, note that for all $s\in[0,\min(t,\dt))$ one has $u(\kappa_m^T(s),\cdot)=u_0$, thus applying the Minkowski inequality and the moment bounds~\eqref{eq:momo-u} from Proposition~\ref{prop:uExistence}, for the first integral one has
\begin{align*}
\int_0^{\min(t,\tau)} \underset{y\in \overline{Q}}\sup~\vvvert u(s,y)-u(\kappa_m^T(s),y)\vvvert_{2p} \diff s&\le \int_0^{\min(t,\tau)}  \bigl(\underset{s\in [0,T]}\sup~\underset{y\in \overline{Q}}\sup~\vvvert u(s,y)\vvvert_{2p}+\norm{u_0}_\infty\bigr)\diff s\\
&\le C\tau\bigl(1+\norm{u_0}_\infty\bigr).
\end{align*}
On the other hand, applying the regularity property~\eqref{eq:reg} from Proposition~\ref{prop:reg}, for all $t\in(\tau,\infty)$ one has
\[
\int_{\tau}^t \underset{y\in Q}\sup~\vvvert u(s,y)-u(\kappa_m^T(s),y)\vvvert_{2p} \diff s \le C_{p,\gamma}(T)\bigl(1+\norm{u_0}_\infty\bigr)\int_\dt^t \frac{|s-\kappa_m^T(s)|^\gamma}{\left(\kappa_m^T(s) \right)^\gamma} \diff s.
\]
Using the lower bound $\kappa_m^T(s)\geq s-\dt$, the upper bound $|s-\kappa_m^T(s)|\le \dt$ and a change of variables, one obtains the upper bounds
\[
\int_\dt^t \frac{|s-\kappa_m^T(s)|^\gamma}{\left( \kappa_m^T(s) \right)^\gamma} \diff s\le \int_\dt^t \frac{\dt^\gamma}{(s-\dt)^\gamma} \diff s \le {\dt^\gamma}\int_0^{t-\dt} \frac{1}{s^\gamma} \diff s
\le C_\gamma(T)\dt^\gamma.
\]
As a result, for the error term $\overline{\mathcal{A}_{3,2p}^m}(t)$, one obtains the following upper bound: there exists $C_{p,\gamma}(T)\in(0,\infty)$ such that one has
\begin{equation}\label{eq:errorA3}
\underset{t\in[0,T]}\sup~\overline{\mathcal{A}_{3,2p}^m}(t)\le C_{p,\gamma}(T)\bigl(1+\norm{u_0}_\infty\bigr)\dt^\gamma.
\end{equation}

\medskip

$\bullet$ Treatment of the error term $\overline{\mathcal{A}_{4,2p}^m}(t)$.

Applying the Minkowski inequality, for all $(t,x)\in[0,T]\times \overline{Q}$, one has
\[
\vvvert A_4^m(t,x) \vvvert_{2p} \le \int_0^t \int_Q G_d(t-\kappa_m^T(s),x,y) \vvvert b(\kappa_m^T(s),y,u(\kappa_m^T(s),y))-b(\kappa_m^T(s),y,u^m(\kappa_m^T(s),y))\vvvert_{2p} \diff y \diff s.
\]
Owing to the condition~\eqref{L} from Assumption~\ref{ass:coeff}, the mapping $u\mapsto b(s,y,u)$ satisfies a global Lipschitz continuity property. Therefore, recalling the definition of $\overline{\mathcal{E}}_{2p}^m(s)$ and applying the property~\eqref{eq:Gdint} of the heat kernel $G_d$ (see Subsection~\ref{sec:kernel}), one has
\begin{align*}
\vvvert A_4^m(t,x) \vvvert_{2p} &\le C\int_0^t \int_Q G_d(t-\kappa_m^T(s),x,y) \vvvert u(\kappa_m^T(s),y)-u^m(\kappa_m^T(s),y)\vvvert_{2p} \diff y \diff s\\
&\le C\int_0^t \int_Q G_d(t-\kappa_m^T(s),x,y) \overline{\mathcal{E}}_{2p}^m(\kappa_m^T(s)) \diff y \diff s\\
&\le C(T)\int_0^t \overline{\mathcal{E}}_{2p}^m(\kappa_m^T(s)) \diff s.
\end{align*}
As a result, for the error term $\overline{\mathcal{A}_{4,2p}^m}(t)$, one obtains the following upper bound: there exists $C(T)\in(0,\infty)$ such that one has
\begin{equation}\label{eq:errorA4}
\underset{t\in[0,T]}\sup~\overline{\mathcal{A}_{4,2p}^m}(t)\le C(T)\int_0^t \overline{\mathcal{E}}_{2p}^m(\kappa_m^T(s)) \diff s.
\end{equation}

This concludes the treatment of the error terms $\overline{\mathcal{A}_{j,2p}^m}(t)$ with $j\in\{1,\ldots,4\}$. It remains to deal with the error terms $\overline{\mathcal{B}_{j,2p}^m}(t)$ with $j\in\{1,\ldots,4\}$.

\medskip

$\bullet$ Treatment of the error term $\overline{\mathcal{B}_{1,2p}^m}(t)$.

Applying the Burkholder--Davis--Gundy inequality~\eqref{bdg}, for all $(t,x)\in[0,T]\times \overline{Q}$, one has
\[
\E[|B_1^m(t,x)|^{2p}]\le C_p(T)\E \left[\left|\int_0^t \|[G_d(t-s,x,\cdot)-G_d(t-\kappa_m^T(s),x,\cdot)]\sigma(s,\cdot,u(s,\cdot))\|_{(\alpha)}^2 \diff s\right|^p\right].
\]
For all $(t,x)\in[0,T]\times \overline{Q}$, introduce the auxiliary positive measure $\nu_{t,x}^{m}$ on $[0,t]\times Q^2$, which is absolutely continuous with respect to the Lebesgue measure, and with Radon--Nikodym derivative given by
\[
\frac{\diff \nu_{t,x}^{m}(s,y,z)}{\diff s \diff y \diff z}=\left|G_d(t-s,x,y)-G_d(t-\kappa_m^T(s),x,y)\right|\left|G_d(t-s,x,z)-G_d(t-\kappa_m^T(s),x,z)\right||y-z|^{-\alpha}.
\]
Making use of the auxiliary measure $\nu_{t,x}^{m}$ introduced above yields the upper bound
\[
\E[|B_1^m(t,x)|^{2p}]\le C_p(T)\E \left[ \left|\int_{0}^{t}\iint_{Q\times Q}|\sigma(s,y,u(s,y))| |\sigma(s,z,u(s,z))| \diff \nu_{t,x}^{m}(s,y,z) \right|^p \right].
\]
Applying the H\"older inequality, one then obtains
\begin{align*}
\E[|B_1^m(t,x)|^{2p}]&\le C_p(T)\int_{0}^{t}\iint_{Q\times Q}\E[|\sigma(s,y,u(s,y))|^p |\sigma(s,z,u(s,z))|^p] \diff \nu_{t,x}^{m}(s,y,z) \nu_{t,x}^{m}([0,t]\times Q^2)^{p-1}\\
&\le C_p(T)\underset{s\in[0,T]}\sup~\underset{y,z\in \overline{Q}}\sup~\E[|\sigma(s,y,u(s,y))|^p |\sigma(s,z,u(s,z))|^p]  \nu_{t,x}^{m}([0,t]\times Q^2)^{p}.
\end{align*}
Owing to the condition~\eqref{LG} from Assumption~\ref{ass:coeff}, the mapping $\sigma$ has at most linear growth. Therefore, applying the moment bounds~\eqref{eq:momo-u} from Proposition~\ref{prop:uExistence}, one obtains the upper bounds
\begin{align*}
\underset{s\in[0,T]}\sup~\underset{y,z\in \overline{Q}}\sup~\E[|\sigma(s,y,u(s,y))|^p |\sigma(s,z,u(s,z))|^p]&\le \underset{s\in[0,T]}\sup~\underset{y\in \overline{Q}}\sup~\E[|\sigma(s,y,u(s,y))|^{2p}]\\
&\le C_p(T)\bigl(1+\|u_0\|_{\infty}^{2p}\bigr).
\end{align*}
To proceed, one needs to prove an upper bound for
\[
\nu_{t,x}^{m}([0,t]\times Q^2)=\int_{0}^{t}\|G_d(t-s,x,\cdot)-G_d(t-\kappa_m^T(s),x,\cdot)\|_{(\alpha)}^{2} \diff s.
\]
For all $s\in(0,t)$, let $\delta(s)=(t-\kappa_m^T(s))-(t-s)=s-\kappa_m^T(s)$. In order to apply the inequality~\eqref{eq:GdEstimates-interpoled-strong} on the heat kernel from Lemma~\ref{lem:GdEstimates-interpoled} (see Subsection~\ref{sec:kernel}), the cases $\delta(s)<t-s$ and $\delta(s)\ge t-s$ are treated separately.

On the one hand, if $\delta(s)<t-s$, one obtains
\begin{align*}
\|G_d(t-s,x,\cdot)&-G_d(t-\kappa_m^T(s),x,\cdot)\|_{(\alpha)}^{2}\\
&\le C\frac{\delta(s)^2}{(t-s)^2}\iint_{Q\times Q}  (t-s)^{-d} e^{-c\frac{|y-x|^2}{2(t-s)}}
e^{-c\frac{|z-x|^2}{2(t-s)}} |y-z|^{-\alpha}\diff y\diff z.
\end{align*}
Under the condition $\delta(s)<t-s$, for all $\gamma\in(0,1)$ one has
\[
\frac{\delta(s)^2}{(t-s)^2}\le \frac{\delta(s)^{2\gamma}}{(t-s)^{2\gamma}}.
\]
Applying changes of variable $y'=y-x$ and $z'=z-x$, and $y''=y'/\sqrt{t-s}$ and $z''=z'/\sqrt{t-s}$, one has
\begin{align*}
\iint_{Q\times Q}  (t-s)^{-d} &e^{-c\frac{|y-x|^2}{2(t-s)}}
e^{-c\frac{|z-x|^2}{2(t-s)}} |y-z|^{-\alpha}\diff y\diff z\\
&\le \iint_{\R^d\times \R^d}  (t-s)^{-d} e^{-c\frac{|y'|^2}{2(t-s)}}
e^{-c\frac{|z'|^2}{2(t-s)}} |y'-z'|^{-\alpha}\diff y'\diff z'\\
&\le C(t-s)^{-\frac{\alpha}{2}}\iint_{\R^d\times \R^d}   e^{-c\frac{|y''|^2}{2}}
e^{-c\frac{|z''|^2}{2}} |y''-z''|^{-\alpha}\diff y''\diff z''\\
&\le C_\alpha (t-s)^{-\frac{\alpha}{2}}.
\end{align*}
On the other hand, if $\delta(s)=s-\kappa_m^T(s)\ge t-s$, applying the inequality~\eqref{eq:lem-aux1} from Lemma~\ref{lem:aux}, one obtains
\begin{align*}
\|G_d(t-s,x,\cdot)-G_d(t-\kappa_m^T(s),x,\cdot)\|_{(\alpha)}^{2}&\le \|G_d(t-s,x,\cdot)\|_{(\alpha)}^{2}+\|G_d(t-\kappa_m^T(s),x,\cdot)\|_{(\alpha)}^{2}\\
&\le C_\alpha (t-s)^{-\frac{\alpha}{2}}+C_\alpha (t-\kappa_m^T(s))^{-\frac{\alpha}{2}}\\
&\le C_\alpha(t-s)^{-\frac{\alpha}{2}}.
\end{align*}
Moreover, under the condition $\delta(s)\ge t-s$, for all $\gamma\in(0,1)$ one has
\[
1\le \frac{\delta(s)^{2\gamma}}{(t-s)^{2\gamma}}.
\]
As a result, in both cases one obtains the upper bound  
\[
\|G_d(t-s,x,\cdot)-G_d(t-\kappa_m^T(s),x,\cdot)\|_{(\alpha)}^{2}\le C_\alpha \frac{(s-\kappa_m^T(s))^{2\gamma}}{(t-s)^{2\gamma+\frac{\alpha}{2}}}.
\]
Therefore, taking into account the condition $\gamma\in(0,\frac12-\frac{\alpha}{4})$, for all $(t,x)\in[0,T]\times Q$, one has
\begin{align*}
\nu_{t,x}^{m}([0,t]\times Q^2)&=\int_{0}^{t}\|G_d(t-s,x,\cdot)-G_d(t-\kappa_m^T(s),x,\cdot)\|_{(\alpha)}^{2} \diff s\\
&\le C_\alpha \dt^{2\gamma}\int_0^t (t-s)^{-2\gamma-\frac\alpha2} \diff s\le C_{\alpha,\gamma}(T)\dt^{2\gamma}.
\end{align*}
Finally, for all $(t,x)\in[0,T]\times Q$, one has the upper bound
\[
\vvvert B_1^m(t,x)\vvvert_{2p}^{2p}=\E[|B_1^m(t,x)|^{2p}]\le C_{\alpha,\gamma}(T)\bigl(1+\|u_0\|_{\infty}^{2p}\bigr)\dt^{2p\gamma}.
\]
As a result, for the error term $\overline{\mathcal{B}_{1,2p}^m}(t)$, one obtains the following upper bound: there exists $C_{p,\alpha,\gamma}(T)\in(0,\infty)$ such that one has
\begin{equation}\label{eq:errorB1}
\underset{t\in[0,T]}\sup~\overline{\mathcal{B}_{1,2p}^m}(t)\le C_{p,\alpha,\gamma}(T)\bigl(1+\|u_0\|_{\infty}\bigr)\dt^{\gamma}.
\end{equation}

\medskip

$\bullet$ Treatment of the error term $\overline{\mathcal{B}_{2,2p}^m}(t)$.

Applying the inequality~\eqref{eq:lem-tech3} from Lemma~\ref{lem:tech} (with $\kappa(s)=\kappa_m^T(s)$), for all $(t,x)\in[0,T]\times \overline{Q}$, one has
\[
\E[|{B}_{2}^m(t,x)|^{2p}]\le C_{p,\alpha}(T)\underset{0\le s\le T}\sup~\underset{y\in Q}\sup~\E[|\sigma(s,y,u(s,y))-\sigma(\kappa_m^T(s),y,u(s,y))|^{2p}].
\]
Owing to the condition~\eqref{L} from Assumption~\ref{ass:coeff}, the mapping $s\in[0,T]\mapsto b(s,y,u)$ satisfies a H\"older continuity property with exponent $\frac12-\frac{\alpha}{4}$, uniformly with respect to $(y,u)\in\R^2$. Therefore, recalling that $\gamma\in(0,\frac12-\frac{\alpha}{4})$, one has
\[
\E[|{B}_{2}^m(t,x)|^{2p}]\le C_{p,\alpha}(T)\dt^{2p\gamma}.
\]
As a result, for the error term $\overline{\mathcal{B}_{2,2p}^m}(t)$, one obtains the following upper bound: there exists $C_{p,\alpha,\gamma}(T)\in(0,\infty)$ such that one has
\begin{equation}\label{eq:errorB2}
\underset{t\in[0,T]}\sup~\overline{\mathcal{B}_{2,2p}^m}(t)\le C_{p,\alpha,\gamma}(T)\dt^{\gamma}.
\end{equation}

\medskip

$\bullet$ Treatment of the error term $\overline{\mathcal{B}_{3,2p}^m}(t)$.

Applying the inequality~\eqref{eq:lem-tech1} from Lemma~\ref{lem:tech} (with $\kappa(s)=\kappa_m^T(s)$), for all $(t,x)\in[0,T]\times \overline{Q}$, one has
\[
\vvvert B_{3}^m(t,x) \vvvert_{2p}^2\le C_{p,\alpha}(T)\int_{0}^{t}(t-\kappa_m^T(s))^{-\frac{\alpha}{2}}\underset{y\in \overline{Q}}\sup~\vvvert\sigma(\kappa_m^T(s),y,u(s,y))-\sigma(\kappa_m^T(s),y,u(\kappa_m^T(s),y))\vvvert_{2p}^2\diff s.
\]
Owing to the condition~\eqref{L} from Assumption~\ref{ass:coeff}, the mapping $\sigma$ satisfies a global Lipschitz continuity property. Therefore, one has
\[
\vvvert B_{3}^m(t,x) \vvvert_{2p}^2\le C_{p,\alpha}(T)\int_{0}^{t}(t-\kappa_m^T(s))^{-\frac{\alpha}{2}}\underset{y\in \overline{Q}}\sup~\vvvert u(s,y)-u(\kappa_m^T(s),y)\vvvert_{2p}^2\diff s.
\]
Like in the proof of the upper bound~\eqref{eq:errorA3} for the error term $\overline{\mathcal{B}_{3,2p}^m}(t)$, the regularity property~\eqref{eq:reg} from Proposition~\ref{prop:reg} is not directly applicable, and it is necessary to decompose the integral as
\begin{align*}
\int_{0}^{t}(t-\kappa_m^T(s))^{-\frac{\alpha}{2}}&\underset{y\in \overline{Q}}\sup~\vvvert u(s,y)-u(\kappa_m^T(s),y)\vvvert_{2p}^2\diff s\\
&=\int_{0}^{\min(t,\dt)}(t-\kappa_m^T(s))^{-\frac{\alpha}{2}}\underset{y\in \overline{Q}}\sup~\vvvert u(s,y)-u(\kappa_m^T(s),y)\vvvert_{2p}^2\diff s\\
&+\mathds{1}_{t>\dt}\int_{\dt}^{t}(t-\kappa_m^T(s))^{-\frac{\alpha}{2}}\underset{y\in \overline{Q}}\sup~\vvvert u(s,y)-u(\kappa_m^T(s),y)\vvvert_{2p}^2\diff s.
\end{align*}
On the one hand, note that for all $s\in[0,\min(t,\dt))$ one has $u(\kappa_m^T(s),\cdot)=u_0$, thus applying the Minkowski inequality and the moment bounds~\eqref{eq:momo-u} from Proposition~\ref{prop:uExistence}, for the first integral one has
\begin{align*}
\int_{0}^{\min(t,\dt)}(t-\kappa_m^T(s))^{-\frac{\alpha}{2}}&\underset{y\in \overline{Q}}\sup~\vvvert u(s,y)-u(\kappa_m^T(s),y)\vvvert_{2p}^2\diff s\\
&\le \int_{0}^{\min(t,\dt)}(t-\kappa_m^T(s))^{-\frac{\alpha}{2}}\bigl(\underset{y\in \overline{Q}}\sup~\vvvert u(s,y)\vvvert_{2p}^2+\norm{u_0}_\infty^2\bigr)\diff s\\
&\le \int_{0}^{\min(t,\dt)}(t-\kappa_m^T(s))^{-\frac{\alpha}{2}}\diff s\bigl(1+\norm{u_0}_\infty\bigr)^2.
\end{align*}
To deal with the integral in the right-hand side above, observe that since $\kappa_m^T(s)=0$ for all $s\in[0,\min(t,\dt)]$, one has
\[
\int_0^{\min(t,\dt)} (t-\kappa_m^T(s))^{-\frac{\alpha}{2}} \diff s=\min(t,\dt) t^{-\frac{\alpha}{2}} =
\min(t,\dt)^{\frac{\alpha}{2}}\min(t,\dt)^{1-\frac{\alpha}{2}} t^{-\frac{\alpha}{2}}\le \dt^{1-\frac{\alpha}{2}}\le C\tau^{2\gamma}.
\]
On the other hand, applying the regularity property~\eqref{eq:reg} from Proposition~\ref{prop:reg}, for all $t\in(\tau,\infty)$, one has
\begin{align*}
\int_{\dt}^{t}(t-\kappa_m^T(s))^{-\frac{\alpha}{2}}&\underset{y\in \overline{Q}}\sup~\vvvert u(s,y)-u(\kappa_m^T(s),y)\vvvert_{2p}^2\diff s\\
&\le C_{p,\gamma}(T) \int_\dt^t (t-\kappa_m^T(s))^{-\frac{\alpha}{2}}\frac{|s-\kappa_m^T(s)|^{2\gamma}}{\kappa_m^T(s)^{2\gamma}} \diff s (1+\|u_0\|_\infty)^2.
\end{align*}
The integral in the right-hand side above can be treated as follows: using the inequalities $s\ge \kappa_m^T(s)$ and $\kappa_m^T(s)\ge s-\dt$, for all $t>\dt$ one has
\begin{align*}
\int_\dt^t (t-\kappa_m^T(s))^{-\frac{\alpha}{2}}\frac{|s-\kappa_m^T(s)|^{2\gamma}}{\kappa_m^T(s)^{2\gamma}} \diff s
&\le\dt^{2\gamma} \int_\dt^t (t-\kappa_m^T(s))^{-\frac{\alpha}{2}}\kappa_m^T(s)^{-2\gamma} \diff s \\
&\le\dt^{2\gamma} \int_\dt^t (t-s)^{-\frac{\alpha}{2}}(s-\dt)^{-2\gamma} \diff s \\
&\le\dt^{2\gamma} \int_0^{t-\dt} (t-\dt-s)^{-\frac{\alpha}{2}}s^{-2\gamma} \diff s \\
&\le\dt^{2\gamma} (t-\dt)^{1-\frac{\alpha}2-2\gamma} \int_0^1 (1-s')^{-\frac{\alpha}{2}}(s')^{-2\gamma} \diff s'\\
&\le C_{\alpha,\gamma}(T)\dt^{2\gamma}.
\end{align*}
As a result, for the error term $\overline{\mathcal{B}_{3,2p}^m}(t)$, one obtains the following upper bound: there exists $C_{p,\alpha,\gamma}(T)\in(0,\infty)$ such that one has
\begin{equation}\label{eq:errorB3}
\underset{t\in[0,T]}\sup~\overline{\mathcal{B}_{3,2p}^m}(t)\le C_{p,\alpha,\gamma}(T)\bigl(1+\|u_0\|_{\infty}\bigr)\dt^{\gamma}.
\end{equation}

\medskip

$\bullet$ Treatment of the error term $\overline{\mathcal{B}_{4,2p}^m}(t)$.

Applying the inequality~\eqref{eq:lem-tech2} from Lemma~\ref{lem:tech} (with $\kappa(s)=\kappa_m^T(s)\leq s$), for all $(t,x)\in[0,T]\times \overline{Q}$, one has
\[
\E[|B_{4}^m(t,x)|^{2p}]\le C_{p,\alpha}(T)\int_{0}^{t}(t-s)^{-\frac{\alpha}{2}}\ {\sup_{y \in \overline{Q}}}\ \E[|\sigma(\kappa_m^T(s),y,u(\kappa_m^T(s),y))-\sigma(\kappa_m^T(s),y,u^m(\kappa_m^T(s),y))|^{2p}]\diff s.
\]
Owing to the condition~\eqref{L} from Assumption~\ref{ass:coeff}, the mapping $\sigma$ satisfies a global Lipschitz continuity property. Therefore, recalling the definition of $\overline{\mathcal{E}}_{2p}^m(s)$, one has
\begin{align*}
\E[|B_{4}^m(t,x)|^{2p}]&\le C_{p,\alpha}(T)\int_{0}^{t}(t-s)^{-\frac{\alpha}{2}}\ {\sup_{y \in \overline{Q}}}\ \E[|u(\kappa_m^T(s),y)-u^m(\kappa_m^T(s),y)|^{2p}]\diff s\\
&\le C_{p,\alpha}(T)\int_0^t (t-s)^{-\frac{\alpha}{2}}\overline{\mathcal{E}}_{2p}^m(\kappa_m^T(s))^{2p} \diff s.
\end{align*}
As a result, for the error term $\overline{\mathcal{B}_{4,2p}^m}(t)$, one obtains the following upper bound: there exists $C(T)\in(0,\infty)$ such that one has
\begin{equation}\label{eq:errorB4}
\underset{t\in[0,T]}\sup~\overline{\mathcal{B}_{4,2p}^m}(t)^{2p}\le C(T)\int_0^t (t-s)^{-\frac{\alpha}{2}} \overline{\mathcal{E}}_{2p}^m(\kappa_m^T(s))^{2p} \diff s.
\end{equation}

This concludes the treatment of the error terms $\overline{\mathcal{B}_{j,2p}^m}(t)$ for $j\in\{1,\ldots,4\}$.

\medskip

$\bullet$ Conclusion.

Gathering the upper bounds~\eqref{eq:errorA1},~\eqref{eq:errorA2},~\eqref{eq:errorA3} and~\eqref{eq:errorA4} for the error terms $\overline{\mathcal{A}_{j,2p}^m}(t)$ for $j\in\{1,\ldots,4\}$, and the upper bounds~\eqref{eq:errorB1},~\eqref{eq:errorB2},~\eqref{eq:errorB3} and~\eqref{eq:errorB4} for the error terms $\overline{\mathcal{B}_{j,2p}^m}(t)$ for $j\in\{1,\ldots,4\}$, one obtains the following inequality: for all $T\in(0,\infty)$, $\alpha\in(0,2\wedge d)$, $\gamma\in(\frac12-\frac{\alpha}{4})$ and $p\in[1,\infty)$, there exists $C_{p,\alpha,\gamma}(T)\in(0,\infty)$ such that for all $t\in[0,T]$ one has
\begin{equation}
\overline{\mathcal{E}}^m_{2p}(t)^{2p}\le C_{p,\alpha,\gamma}(T)\dt^{2p\gamma}\bigl(1+\norm{u_0}_\infty\bigr)^{2p}
+C_{p,\alpha,\gamma}(T)\int_0^t \bigl( 1+(t-s)^{-\frac\alpha2}\bigr) \overline{\mathcal{E}}_{2p}^m(\kappa_m^T(s))^{2p}\diff s.
\end{equation}
Applying a version of the Gr\"onwall inequality (c.f \cite[Lem. 15]{Dalang99})  yields the upper bound
\begin{equation}
\underset{t\in[0,T]}\sup~\overline{\mathcal{E}}^m_{2p}(t)^{2p}\le C_{p,\alpha,\gamma}(T)\dt^{2p\gamma}\bigl(1+\norm{u_0}_\infty\bigr)^{2p}.
\end{equation}
This provides the strong error estimates~\eqref{eq:main} and the proof of Theorem~\ref{thm:main} is completed.
\end{proof}

\begin{appendix}\label{app}

\section{Proof of the properties of the heat kernel}\label{app:aux}

The objective of this section is to provide the proofs of the properties of the heat kernel stated in Subsection~\ref{sec:kernel}.

Let us first provide the proof of Lemma~\ref{lem:GdEstimates-interpoled}.
\begin{proof}[Proof of Lemma~\ref{lem:GdEstimates-interpoled}]
Let $t_1<t_2$. The proof of the inequality~\eqref{eq:GdEstimates-interpoled-strong} proceeds by considering two cases.

First, assume that $\frac{t_2-t_1}{t_1}< 1$, i.\,e. that $t_1<t_2<2t_1$.
Applying the inequality~\eqref{eq:GdEstimates-dtG} on the temporal derivative of the heat kernel $G_d$, one obtains
\[
|G_d(t_2,x,y)-G_d(t_1,x,y)|\le \int_{t_1}^{t_2}|\partial_tG_d(t,x,y)|\diff t\le C_d(t_2-t_1)t_1^{-\frac{d}{2}-1}e^{-c_d\frac{|y-x|^2}{t_2}}.
\]
Under the condition $t_2<2t_1$, one obtains the upper bound
\[
|G_d(t_2,x,y)-G_d(t_1,x,y)|\le C_d\frac{(t_2-t_1)}{t_1}t_1^{-\frac{d}{2}}e^{-c_d\frac{|y-x|^2}{2t_1}}.
\]
Secondly, assume that $\frac{t_2-t_1}{t_1}\ge 1$, i.\,e. that $t_2\ge 2t_1$. Applying the triangle inequality and the inequality~\eqref{eq:GdEstimates-G} from Lemma~\ref{lem:GdEstimates}, one obtains for all $x,y\in \overline{Q}$
\begin{align*}
|G_d(t_2,x,y)-G_d(t_1,x,y)|&\le G_d(t_2,x,y)+G_d(t_1,x,y)\\
&\le C_d\bigl(t_1^{-\frac{d}{2}}e^{-c_d\frac{|y-x|^2}{t_1}}+t_2^{-\frac{d}{2}}e^{-c_d\frac{|y-x|^2}{t_2}}\bigr).
\end{align*}
Combining the results above then provides the inequality~\eqref{eq:GdEstimates-interpoled-strong}.

The inequality~\eqref{eq:GdEstimates-interpoled} is a straightforward consequence of the inequality~\eqref{eq:GdEstimates-interpoled-strong}. Indeed, one first obtains the inequality
\[
|G_d(t_2,x,y)-G_d(t_1,x,y)|\le C_d\min\left(\frac{t_2-t_1}{t_1},1\right)\left(t_1^{-\frac{d}{2}}e^{-c_d\frac{|y-x|^2}{2t_1}}+t_2^{-\frac{d}{2}}e^{-c_d\frac{|y-x|^2}{2t_2}}\right)
\]
and if $\gamma\in(0,1)$ for all $x\in\R$ one has $\min(x,1)\le x^\gamma$.

This concludes the proof of Lemma~\ref{lem:GdEstimates-interpoled}.
\end{proof}

Let us now provide the proof of Lemma~\ref{lem:aux}. The inequalities~\eqref{eq:lem-aux1},~\eqref{eq:lem-aux2},~\eqref{eq:lem-aux3} and~\eqref{eq:lem-aux4} are established successively. Recall that $d\in\N$ and $\alpha\in(0,2\wedge d)$.

\begin{proof}[Proof of the inequality~\eqref{eq:lem-aux1}]
Let $t\in(0,\infty)$ and $x\in \overline{Q}$. Applying the inequality~\eqref{eq:GdEstimates-G}, one has
\begin{align*}
\|G_d(t,x,\cdot)\|_{(\alpha)}^{2}&=\iint_{Q\times Q} G_d(t,x,y)G_d(t,x,z)|y-z|^{-\alpha} \diff y \diff z\\
&\le Ct^{-d}\iint_{Q\times Q} e^{-c\frac{|y-x|^2+|z-x|^2}{t}}|y-z|^{-\alpha} \diff y \diff z\\
&\le \mathcal{G}_{t,x}^1+\mathcal{G}_{t,x}^2,
\end{align*}
where $\mathcal{G}_{t,x}^1$ and $\mathcal{G}_{t,x}^2$ are defined as
\begin{align*}
\mathcal{G}_{t,x}^1&=Ct^{-d}\iint_{Q\times Q} \mathds{1}_{|x-y|\ge |y-z|}e^{-c\frac{|y-x|^2+|z-x|^2}{t}}|y-z|^{-\alpha} \diff y \diff z\\
\mathcal{G}_{t,x}^2&=Ct^{-d}\iint_{Q\times Q} \mathds{1}_{|x-y|\le |y-z|}e^{-c\frac{|y-x|^2+|z-x|^2}{t}}|y-z|^{-\alpha} \diff y \diff z.
\end{align*}

Concerning $\mathcal{G}_{t,x}^1$, taking into account the condition $|x-y|\ge |y-z|$ and applying the change of variables $(y',z')=(y-z,z-x)$, one obtains
\begin{align*}
\mathcal{G}_{t,x}^1&=C t^{-d}\iint_{Q\times Q} \mathds{1}_{|x-y|\ge |y-z|} e^{-c\frac{|y-x|^2}{t}}e^{-c\frac{|z-x|^2}{t}} |y-z|^{-\alpha} \diff y \diff z\\
&\le C t^{-d}\iint_{Q\times Q}  e^{-c\frac{|y-z|^2}{t}}e^{-c\frac{|z-x|^2}{t}} |y-z|^{-\alpha} \diff y \diff z\\
&\le Ct^{-d}\int_{\R^d} e^{-c\frac{|y'|^2}{t}} |y'|^{-\alpha} \diff y' \int_{\R^d} e^{-c\frac{|z'|^2}{t}} \diff z'.
\end{align*}

Concerning $\mathcal{G}_{t,x}^2$, taking into account the condition $|x-y|\le |y-z|$ and applying the change of variables $(y',z')=(y-x,z-x)$, one obtains
\begin{align*}
\mathcal{G}_{t,x}^2&=C t^{-d}\iint_{Q\times Q} \mathds{1}_{|x-y|\le |y-z|} e^{-c\frac{|y-x|^2}{t}}e^{-c\frac{|z-x|^2}{t}} |y-z|^{-\alpha} \diff y \diff z\\
&\le C t^{-d}\iint_{Q\times Q}  e^{-c\frac{|y-x|^2}{t}}e^{-c\frac{|z-x|^2}{t}} |y-x|^{-\alpha} \diff y \diff z\\
&\le Ct^{-d}\int_{\R^d} e^{-c\frac{|y'|^2}{t}} |y'|^{-\alpha} \diff y' \int_{\R^d} e^{-c\frac{|z'|^2}{t}} \diff z'.
\end{align*}
Note that one obtains the same upper bound for the two terms $\mathcal{G}_{t,x}^1$ and $\mathcal{G}_{t,x}^2$. Applying the supplementary change of variables $(y'',z'')=t^{-\frac12}(y',z')$, one then has
\begin{align*}
\mathcal{G}_{t,x}^1+\mathcal{G}_{t,x}^2&\le Ct^{-d}\int_{\R^d} e^{-c\frac{|y'|^2}{t}} |y'|^{-\alpha} \diff y' \int_{\R^d} e^{-c\frac{|z'|^2}{t}} \diff z'\\
&=Ct^{-\frac{\alpha}{2}}\int_{\R^d} |y''|^{-\alpha}e^{-c|y''|^2} \diff y'' \int_{\R^d} e^{-c|z''|^2} \diff z''\\
&\le C_\alpha t^{-\frac{\alpha}{2}}.
\end{align*}
One thus obtains the following upper bound: there exists $C_\alpha\in(0,\infty)$ such that for all $t\in(0,\infty)$ one has
\begin{equation}\label{eq:AAUX}
t^{-d}\iint_{Q\times Q} e^{-c\frac{|y-x|^2+|z-x|^2}{t}}|y-z|^{-\alpha} \diff y \diff z \le C_\alpha t^{-\frac{\alpha}{2}}.
\end{equation}
As a result, one has
\[
\|G_d(t,x,\cdot)\|_{(\alpha)}^{2}\le C_\alpha t^{-\frac{\alpha}{2}}.
\]
The proof of the inequality~\eqref{eq:lem-aux1} is thus completed.
\end{proof}

\begin{proof}[Proof of the inequality~\eqref{eq:lem-aux2}]
Let $t_2\ge t_1\ge 0$ and $x\in \overline{Q}$. Applying the change of variable $s=t_2-t$ and the inequality~\eqref{eq:lem-aux1}, one has
\[
\int_{t_1}^{t_2}\|G_d(t_2-t,x,\cdot)\|_{(\alpha)}^2 \diff t=\int_{0}^{t_2-t_1}\|G_d(s,x,\cdot)\|_{(\alpha)}^2 \diff s\le C_\alpha\int_{0}^{t_2-t_1}s^{-\frac{\alpha}{2}} \diff s\le C_\alpha |t_2-t_1|^{1-\frac{\alpha}{2}}.
\]
The proof of the inequality~\eqref{eq:lem-aux2} is thus completed.
\end{proof}

\begin{proof}[Proof of the inequality~\eqref{eq:lem-aux3}]
Let $x_1,x_2\in \overline{Q}$, and set $\delta x=x_2-x_1$. One has the decomposition
\[
\int_{0}^{+\infty}\|G_d(t,x_2,\cdot)-G_d(t,x_1,\cdot)\|_{(\alpha)}^{2}\diff t=\delta \mathcal{G}^1+\delta \mathcal{G}^2,
\]
where $\delta \mathcal{G}^1$ and $\delta \mathcal{G}^2$ are defined as
\begin{align*}
\delta \mathcal{G}^1&=\int_{0}^{|\delta x|^2}\|G_d(t,x_2,\cdot)-G_d(t,x_1,\cdot)\|_{(\alpha)}^{2} \diff t\\
\delta \mathcal{G}^2&=\int_{|\delta x|^2}^{\infty}\|G_d(t,x_2,\cdot)-G_d(t,x_1,\cdot)\|_{(\alpha)}^{2} \diff t.
\end{align*}
Let us first deal with $\delta \mathcal{G}^1$. One has
\begin{align*}
\int_{0}^{|\delta x|^2}\|G_d(t,x_2,\cdot)-G_d(t,x_1,\cdot)\|_{(\alpha)}^{2} \diff t&\le 2\int_{0}^{|\delta x|^2}\|G_d(t,x_1,\cdot)\|_{(\alpha)}^{2} \diff t+2\int_{0}^{|\delta x|^2}\|G_d(t,x_2,\cdot)\|_{(\alpha)}^{2} \diff t\\
&\le 4\int_{0}^{|\delta x|^2}\underset{x\in Q}\sup~\|G_d(t,x,\cdot)\|_{(\alpha)}^{2} \diff t.
\end{align*}
Applying the inequality~\eqref{eq:lem-aux1}, one then gets
\[
\int_{0}^{|\delta x|^2}\underset{x\in Q}\sup~\|G_d(t,x,\cdot)\|_{(\alpha)}^{2} \diff t\le C_\alpha \int_{0}^{|\delta x|^2}t^{-\frac{\alpha}{2}} \diff t=C_\alpha (|\delta x|^2)^{1-\frac{\alpha}{2}}=C_\alpha |\delta x|^{2-\alpha}.
\]
As a result, one obtains the upper bound
\begin{equation}\label{eq:deltaGaux1}
\delta \mathcal{G}^1\le C_\alpha |\delta x|^{2-\alpha}.
\end{equation}
Let us now deal with $\delta \mathcal{G}^2$. By the definition of the norm $\|\cdot\|_{(\alpha)}$, one has
\begin{align*}
\delta \mathcal{G}^2&=\int_{|\delta x|^2}^{\infty}\|G_d(t,x_2,\cdot)-G_d(t,x_1,\cdot)\|_{(\alpha)}^{2} \diff t\\
&=\int_{|\delta x|^2}^{\infty}\iint_{Q\times Q}|G_d(t,x_2,y)-G_d(t,x_1,y)||G_d(t,x_2,z)-G_d(t,x_1,z)| |y-z|^{-\alpha} \diff y \diff z \diff t.
\end{align*}
Let $x(\tau)=(1-\tau)x_1+\tau x_2$ for all $\tau\in[0,1]$. For all $t\ge 0$ and all $y,z\in \overline{Q}$, one obtains
\begin{align*}
|G_d(t,x_2,y)-G_d(t,x_1,y)|=|G_d(t,x(1),y)-G_d(t,x(0),y)|&=\big|\int_{0}^{1} \nabla_x G_d(t,x(\theta),y)\cdot \delta x \diff \theta\big|\\
&\le |\delta x|\int_{0}^{1} |\nabla_x G_d(t,x(\theta),y)| \diff \theta,\\
|G_d(t,x_2,z)-G_d(t,x_1,z)|=|G_d(t,x(1),z)-G_d(t,x(0),z)|&=\big|\int_{0}^{1} \nabla_x G_d(t,x(\eta),z)\cdot \delta x \diff \eta\big|\\
&\le |\delta x|\int_{0}^{1} |\nabla_x G_d(t,x(\eta),z)| \diff \eta.
\end{align*}
As a result, applying the inequality~\eqref{eq:GdEstimates-dxG}, one has the estimates
\begin{align*}
\delta \mathcal{G}^2
&\le |\delta x|^2\int_{|\delta x|^2}^{\infty}\iint_{Q\times Q} \iint_{[0,1]^2} |\nabla_xG_d(t,x(\theta),y)||\nabla_xG_d(t,x(\eta),y)| |y-z|^{-\alpha} \diff y \diff z \diff\theta \diff\eta \diff t\\
&\le C|\delta x|^2\int_{|\delta x|^2}^{\infty} t^{-(d+1)}\iint_{[0,1]^2} \iint_{Q\times Q}  e^{-c\frac{|y-x(\theta)|^2+|z-x(\eta)|^2}{t}} |y-z|^{-\alpha} \diff y \diff z \diff \theta \diff \eta \diff t\\
&\le \delta \mathcal{G}^{2,1}+\delta \mathcal{G}^{2,2},
\end{align*}
where $\delta \mathcal{G}^{2,1}$ and $\delta \mathcal{G}^{2,2}$ are defined as
\begin{align*}
\delta \mathcal{G}^{2,1}&=C|\delta x|^2\int_{|\delta x|^2}^{\infty} t^{-(d+1)}\iint_{[0,1]^2} \iint_{Q\times Q} \mathds{1}_{|y-x(\theta)|\ge |y-z|}e^{-c\frac{|y-x(\theta)|^2+|z-x(\eta)|^2}{t}} |y-z|^{-\alpha}  \diff y \diff z \diff \theta \diff \eta \diff t,\\
\delta \mathcal{G}^{2,2}&=C|\delta x|^2\int_{|\delta x|^2}^{\infty} t^{-(d+1)}\iint_{[0,1]^2} \iint_{Q\times Q} \mathds{1}_{|y-x(\theta)|\le |y-z|}e^{-c\frac{|y-x(\theta)|^2+|z-x(\eta)|^2}{t}} |y-z|^{-\alpha} \diff y \diff z \diff \theta \diff \eta \diff t.
\end{align*}
Concerning $\delta \mathcal{G}^{2,1}$, taking into account the condition $|y-x(\theta)|\ge |y-z|$ and applying the change of variable $(y',z')=(y-z,z-x(\eta))$, one obtains
\begin{align*}
\delta \mathcal{G}^{2,1}
&\le C|\delta x|^2\int_{|\delta x|^2}^{\infty} t^{-(d+1)}\iint_{[0,1]^2} \iint_{Q\times Q} e^{-c\frac{|y-z|^2}{t}} e^{-c\frac{|z-x(\eta)|^2}{t}} |y-z|^{-\alpha} \diff y \diff z \diff \theta \diff \eta \diff t\\
&\le C|\delta x|^2\int_{|\delta x|^2}^{\infty} t^{-(d+1)}\iint_{[0,1]^2} \iint_{\R^d\times \R^d} e^{-c\frac{|y'|^2}{t}} e^{-c\frac{|z'|^2}{t}} |y'|^{-\alpha} \diff y' \diff z' \diff \theta \diff \eta \diff t\\
&\le C|\delta x|^2\int_{|\delta x|^2}^{\infty} t^{-(d+1)} \iint_{\R^d\times \R^d} e^{-c\frac{|y'|^2}{t}} e^{-c\frac{|z'|^2}{t}} |y'|^{-\alpha} \diff y' \diff z' \diff t.
\end{align*}
Concerning $\delta \mathcal{G}^{2,2}$, taking into account the condition $|y-x(\theta)|\le |y-z|$ and applying the change of variable $(y',z')=(y-x(\theta),z-x(\eta))$, one obtains
\begin{align*}
\delta \mathcal{G}^{2,2}
&\le C|\delta x|^2\int_{|\delta x|^2}^{\infty}t^{-(d+1)}\iint_{[0,1]^2} \iint_{Q\times Q} \mathds{1}_{|y-x(\theta)|\le |y-z|}e^{-c\frac{|y-x(\theta)|^2}{t}} e^{-c\frac{|z-x(\eta)|^2}{t}} |y-x(\theta)|^{-\alpha} \diff y \diff z \diff \theta \diff \eta \diff t\\
&\le C|\delta x|^2\int_{|\delta x|^2}^{\infty} t^{-(d+1)}\iint_{[0,1]^2} \iint_{\R^d\times \R^d} e^{-c\frac{|y'|^2}{t}} e^{-c\frac{|z'|^2}{t}} |y'|^{-\alpha} \diff y' \diff z' \diff \theta \diff \eta \diff t\\
&\le C|\delta x|^2\int_{|\delta x|^2}^{\infty} t^{-(d+1)} \iint_{\R^d\times \R^d} e^{-c\frac{|y'|^2}{t}} e^{-c\frac{|z'|^2}{t}} |y'|^{-\alpha} \diff y' \diff z' \diff t.
\end{align*}
Observe that one gets the same upper bound for the two terms $\delta\mathcal{G}^{2,1}$ and $\delta \mathcal{G}^{2,2}$. Applying the supplementary change of variables $(y'',z'')=t^{-\frac12}(y',z')$, one then obtains the upper bounds
\begin{align*}
\delta \mathcal{G}^2&\le \delta \mathcal{G}^{2,1}+\mathcal{G}^{2,2}\\
&\le C|\delta x|^2\int_{|\delta x|^2}^{\infty} t^{-1-\frac{\alpha}{2}} \diff t \int_{\R^d} |y''|^{-\alpha}e^{-c|y''|^2} \diff y'' \int_{\R^d} e^{-c|z''|^2} \diff z''\\
&\le C_\alpha|\delta x|^2 (|\delta x|^2)^{-\frac{\alpha}{2}}=C_\alpha|\delta x|^{2-\alpha}.
\end{align*}
As a result, one gets the upper bound
\begin{equation}\label{eq:deltaGaux2}
\delta \mathcal{G}^2\le C_\alpha |\delta x|^{2-\alpha}.
\end{equation}
Combining the upper bounds~\eqref{eq:deltaGaux1} and~\eqref{eq:deltaGaux2} yields the inequality
\[
\int_{0}^{+\infty}\|G_d(t,x_2,\cdot)-G_d(t,x_1,\cdot)\|_{(\alpha)}^{2}\diff t=\delta \mathcal{G}^1+\delta \mathcal{G}^2 \le C_\alpha |\delta x|^{2-\alpha}.
\]
The proof of the inequality~\eqref{eq:lem-aux3} is thus completed.
\end{proof}

\begin{proof}[Proof of the inequality~\eqref{eq:lem-aux4}]
Let $t_2\ge t_1\ge 0$ and $x\in \overline{Q}$.

Owing to the definition of the norm $\|\cdot\|_{(\alpha)}$, one has
\begin{align*}
\int_{0}^{t_1}&\|G_d(t_2-t,x,\cdot)-G_d(t_1-t,x,\cdot)\|_{(\alpha)}^{2}\diff t\\
&=\int_{0}^{t_1}\iint_{Q\times Q} |G_d(t_2-t,x,y)-G_d(t_1-t,x,y)||G_d(t_2-t,x,z)-G_d(t_1-t,x,z)| |y-z|^{-\alpha} \diff y \diff z \diff t.
\end{align*}
Let $\gamma\in(0,1)$. Applying the inequality~\eqref{eq:GdEstimates-interpoled} on the heat kernel $G_d$ (see Subsection~\ref{sec:kernel}),
there exists $C_\gamma\in(0,\infty)$ such that for all $y\in \overline{Q}$ and all $t\in(0,t_1)$ one has
\begin{align*}
|G_d(t_2-t,x,y)&-G_d(t_1-t,x,y)|\\
&\le \bigl(|G_d(t_2-t,x,y)-G_d(t_1-t,x,y)|\bigr)^{\gamma}\bigl(G_d(t_2-t,x,y)+G_d(t_1-t,x,y)\bigr)^{1-\gamma}\\
&\le C_\gamma(t_1-t)^{-\frac{d}{2}-\gamma}|t_2-t_1|^{\gamma}e^{-c\frac{|y-x|^2}{t_1-t}}.
\end{align*}
Applying the change of variables $(y',z')=(y-x,z-x)$, one has the estimates
\begin{align*}
\int_{0}^{t_1}&\|G(t_2-t,x,\cdot)-G(t_1-t,x,\cdot)\|_{(\alpha)}^{2}\diff t\\
&\le C_\gamma|t_2-t_1|^{2\gamma}\int_{0}^{t_1} (t_1-t)^{-d-2\gamma} 
\int_{\R^d\times\R^d}e^{-c\gamma\frac{|y-x|^2}{t_1-t}} e^{-c\gamma\frac{|z-x|^2}{t_1-t}}|y-z|^{-\alpha} \diff y \diff z \diff t.
\end{align*}
Applying the auxiliary inequality~\eqref{eq:AAUX} from the proof of the inequality~\eqref{eq:lem-aux1}, one then obtains
\begin{align*}
\int_{0}^{t_1}\|G(t_2-t,x,\cdot)-G(t_1-t,x,\cdot)\|_{(\alpha)}^{2}\diff t&\le C_\alpha|t_2-t_1|^{2\gamma}\int_{0}^{t_1}(t_1-t)^{-2\gamma-\frac{\alpha}{2}}\diff t.
\end{align*}
Assuming that $\gamma\in(0,1-\frac{\alpha}{2})$, one has $2\gamma+\frac{\alpha}{2}<1$, and thus one gets the relation
\[
\int_{0}^{t_1}(t_1-t)^{-2\gamma-\frac{\alpha}{2}}dt=\int_{0}^{t_1}t^{-2\gamma-\frac{\alpha}{2}}\diff t=C_{\alpha,\gamma}t_1^{1-2\gamma-\frac{\alpha}{2}}.
\]
As a result, there exists $C_{\alpha,\gamma}\in(0,\infty)$ such that one has the bound
\[
\int_{0}^{t_1}\|G(t_2-t,x,\cdot)-G(t_1-t,x,\cdot)\|_{(\alpha)}^{2}\diff t \le C_{\alpha,\gamma}t_1^{1-2\gamma-\frac{\alpha}{2}}|t_2-t_1|^{2\gamma}.
\]
The proof of the inequality~\eqref{eq:lem-aux4} is thus completed.
\end{proof}

It remains to provide the proof of Lemma~\ref{lem:tech}.
\begin{proof}[Proof of Lemma~\ref{lem:tech}]
Applying the Burkholder--Davis--Gundy inequality~\eqref{bdg}, one has the upper bound
\[
\E\left[\left|\int_{0}^{t}\int_Q G_d(t-\kappa(s),x,y)X(s,y)\Falpha(\diff s,\diff y)\right|^{2p} \right]\le
C_p(T)\E \left[\left|\int_0^t \|G_d(t-\kappa(s),x,\cdot)X(s,\cdot)\|_{(\alpha)}^2 \diff s\right|^p \right].
\]
Owing to the definition of the norm $\vvvert \cdot \vvvert_{2p}$, one obtains
\begin{align*}
\vvvert\int_{0}^{t}\int_Q G_d(t-\kappa(s),x,y)X(s,y)\Falpha(\diff s,\diff y)\vvvert_{2p}&=
\left(\E \left[\left|\int_{0}^{t}\int_Q G_d(t-\kappa(s),x,y)X(s,y)\Falpha(\diff s,\diff y)\right|^{2p} \right]\right)^{\frac{1}{2p}}\\
&\le C_p(T)\left(\vvvert \int_0^t \|G_d(t-\kappa(s),x,\cdot)X(s,\cdot)\|_{(\alpha)}^2 \diff s \vvvert_p\right)^{\frac12}.
\end{align*}
Applying the Minkowski inequality for the norm $\vvvert \cdot \vvvert_p$ and recalling the definition of the norm $\|\cdot\|_{(\alpha)}$, one obtains
\begin{align*}
\vvvert \int_0^t &\|G_d(t-\kappa(s),x,\cdot)X(s,\cdot)\|_{(\alpha)}^2 \diff s \vvvert_p \le \int_0^t \vvvert\|G_d(t-\kappa(s),x,\cdot)X(s,\cdot)\|_{(\alpha)}^2 \vvvert_p \diff s \\
&\le \int_0^t \iint_{Q\times Q} G_d(t-\kappa(s),x,y) G_d(t-\kappa(s),x,z)\vvvert X(s,y)X(s,z) \vvvert_p |y-z|^{-\alpha} \diff y \diff z \diff s.
\end{align*}
Note that by the Cauchy--Schwarz inequality, for all $s\ge 0$, one has
\[
\underset{y,z\in \overline{Q}}\sup~\vvvert X(s,y)X(s,z) \vvvert_p\le \underset{y\in \overline{Q}}\sup~\vvvert X(s,y)\vvvert_{2p}^2.
\]
As a result, applying the inequality~\eqref{eq:lem-aux1} from Lemma~\ref{lem:aux}, one obtains
\begin{align*}
\vvvert \int_0^t \|G_d(t-\kappa(s),x,\cdot)X(s,\cdot)\|_{(\alpha)}^2 \diff s \vvvert_p&\le \int_0^t \|G_d(t-\kappa(s),x,\cdot)\|_{(\alpha)}^{2} \underset{y\in \overline{Q}}\sup~\vvvert X(s,y)\vvvert_{2p}^2 \diff s\\
&\le C_{\alpha}(T)\int_0^t (t-\kappa(s))^{-\frac{\alpha}{2}} \underset{y\in \overline{Q}}\sup~\vvvert X(s,y)\vvvert_{2p}^2 \diff s.
\end{align*}
This shows inequality~\eqref{eq:lem-tech1}.

To prove the inequality~\eqref{eq:lem-tech2}, it suffices to apply the Jensen inequality: one has
\begin{align*}
&\left(\int_0^t (t-\kappa(s))^{-\frac{\alpha}{2}}\underset{x\in \overline{Q}}\sup~\vvvert X(s,x)\vvvert_{2p}^2\diff s\right)^p\\
& \qquad \leq
\left(\int_0^t (t-\kappa(s))^{-\frac{\alpha}{2}} \diff s  \right)^{p-1} \int_0^t (t-\kappa(s))^{-\frac{\alpha}{2}}\underset{x\in \overline{Q}}\sup~\vvvert X(s,x)\vvvert_{2p}^{2p}\diff s \\
& \qquad \leq
\left(\int_0^t (t-\kappa(s))^{-\frac{\alpha}{2}} \diff s  \right)^{p-1} \int_0^t (t-\kappa(s))^{-\frac{\alpha}{2}}\underset{x\in \overline{Q}}\sup~\E[|X(s,x)|^{2p}]\diff s.
\end{align*}
Recalling that $\alpha<2$ and that $\kappa(s)\le s$ for all $s\ge 0$, one has the upper bound
\[
\displaystyle\int_0^t\left( t- \kappa(s) \right)^{-\alpha/2}\diff s \leq \int_0^t(t-s)^{-\alpha/2}\diff s\leq C_{\alpha}(T).
\]
It is then straightforward to complete the proof of the inequality~\eqref{eq:lem-tech2}.

To prove the inequality~\eqref{eq:lem-tech3}, it suffices to consider the supremum over $s\in[0,T]$: one obtains
\begin{align*}
\int_0^t (t-\kappa(s))^{-\frac{\alpha}{2}}\underset{x\in Q}\sup~\E[|X(s,x)|^{2p}]\diff s
&\leq\int_0^t (t-\kappa(s))^{-\frac{\alpha}{2}} \diff s \underset{0\le s\le t}\sup~\underset{x\in Q}\sup~\vvvert X(s,x)\vvvert_{2p}^{2p}\\
&\leq C_{p,\alpha}(T)\underset{0\le t\le T}\sup~\underset{x\in Q}\sup~\vvvert X(t,x)\vvvert_{2p}^{2p}.
\end{align*}
It is then straightforward to complete the proof of the inequality~\eqref{eq:lem-tech3}.

This concludes the proof of Lemma~\ref{lem:tech}.
\end{proof}

\section{Implementation details on the covariance matrix of the noise in dimension $d=1$}\label{app:1Dnoise}
In this appendix, we provide some details on the implementation of the discretization of the noise in Section~\ref{sec:spaceDisc} in $d=1$ dimension.
We recall that $\alpha \in (0,1)$ and that the covariance matrix $C \in \mathbb{R}^{(n-1) \times (n-1)}$ has elements given by
\begin{equation*}
C_{ij} = \int_{x_{i}}^{x_{i}+\Delta x} \int_{x_{j}}^{x_{j}+\Delta x} |x-y|^{-\alpha} \diff x \diff y,\ \text{for} \ i,j=1,\ldots, n-1,
\end{equation*}
where we set $\Delta x=\frac1n$ for the mesh size. These integrals can explicitly be computed as seen in the next lemma.
\begin{lemma}\label{lem:app_1Dnoise}
For $i < j$, one has
\begin{equation*}
C_{ij} = C_{ji}=\frac{\Delta x^{2-\alpha}}{(1-\alpha)(2-\alpha)} \left( (j - i + 1)^{2-\alpha} - 2 (j - i)^{2-\alpha} + (j - i - 1)^{2-\alpha} \right).
\end{equation*}
Furthermore, for all $i\in\{1,\ldots,n-1\}$, one has
\begin{equation*}
C_{ii} = \frac{2 \Delta x^{2-\alpha}}{(1-\alpha)(2-\alpha)},\ \text{for} \ i=1,\ldots,n-1.
\end{equation*}
\end{lemma}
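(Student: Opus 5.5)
The plan is to reduce every entry $C_{ij}$ to one reference integral over the unit square by an affine change of variables, and then to evaluate that integral exactly with an explicit second antiderivative of $s\mapsto|s|^{-\alpha}$.

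\emph{Step 1: rescaling.} Set $x=x_i+\Delta x\,u$ and $y=x_j+\Delta x\,v$ with $u,v\in[0,1]$. Since $x_i-x_j=(i-j)\Delta x$, one gets $|x-y|=\Delta x\,|(i-j)+u-v|$. The Jacobian is $\Delta x^2$, so
\[
C_{ij}=\Delta x^{2-\alpha}\int_0^1\!\!\int_0^1 |k+u-v|^{-\alpha}\diff u\diff v,\qquad k=i-j.
\]
Changing $(u,v)\mapsto(v,u)$ shows that this integral is unchanged under $k\mapsto -k$. This gives $C_{ij}=C_{ji}$ at once, so one may take $k=j-i\ge 0$.

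\emph{Step 2: a second antiderivative.} Define $\Phi(s)=\frac{|s|^{2-\alpha}}{(1-\alpha)(2-\alpha)}$ on $\R$. Here $\alpha\in(0,1)$ because $d=1$.
\begin{itemize}
\item Since $2-\alpha>1$, $\Phi$ is $\mathcal{C}^1$, with $\Phi'(s)=\frac{\mathrm{sign}(s)|s|^{1-\alpha}}{1-\alpha}$.
\item $\Phi'$ is continuous and absolutely continuous on bounded intervals.
\item Its a.e.\ derivative is $\Phi''(s)=|s|^{-\alpha}$, which is locally integrable because $\alpha<1$.
\end{itemize}
Hence the fundamental theorem of calculus holds for $\Phi'$ across $s=0$, and also for $\Phi$ itself.

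\emph{Step 3: iterated integration.} Fix $v$ and integrate in $u$:
\[
\int_0^1|k+u-v|^{-\alpha}\diff u=\Phi'(k+1-v)-\Phi'(k-v).
\]
Integrating in $v$ over $[0,1]$ and using $\frac{\diff}{\diff v}\Phi(c-v)=-\Phi'(c-v)$ gives
\[
\bigl(\Phi(k+1)-\Phi(k)\bigr)-\bigl(\Phi(k)-\Phi(k-1)\bigr)=\Phi(k+1)-2\Phi(k)+\Phi(k-1).
\]
For $k=j-i\ge1$ this is exactly the stated formula for $C_{ij}$. For $k=0$ it equals $\Phi(1)+\Phi(-1)=\frac{2}{(1-\alpha)(2-\alpha)}$, which gives $C_{ii}$. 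Fubini is legitimate since the integrand is nonnegative and integrable.

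\emph{Main obstacle.} The only delicate point is the singularity of $|k+u-v|^{-\alpha}$ on the set where $k+u-v=0$. This set meets the domain $[0,1]^2$ when $k=0$ (the diagonal $u=v$) and also when $k=1$ (the corner $u=0$, $v=1$). It is handled entirely by the absolute continuity of $\Phi'$ noted in Step 2, which relies on $\alpha<1$. I would justify this carefully, for example by integrating on $|k+u-v|>\varepsilon$ and letting $\varepsilon\to0$ using the continuity of $\Phi'$. Everything else is direct computation.
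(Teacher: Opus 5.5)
Your proof is correct and is essentially the paper's argument. Both evaluate the iterated integral explicitly with the antiderivatives $s^{1-\alpha}/(1-\alpha)$ and $s^{2-\alpha}/((1-\alpha)(2-\alpha))$, producing a second difference of the latter. The differences are only in packaging: your rescaling makes $C_{ij}=C_{ji}$ explicit, and your even antiderivative $\Phi(s)=|s|^{2-\alpha}/((1-\alpha)(2-\alpha))$ folds the paper's separate diagonal computation (which splits the inner integral at $x=y$) into the single formula $\Phi(1)-2\Phi(0)+\Phi(-1)$.
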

\begin{proof}
We first consider the case $i < j$. This means that $x_{i} + \Delta x \leq x_{j}$ and that $y \leq x$ for all $y \in [x_{i},x_{i}+\Delta x]$ and for all $x \in [x_{j},x_{j} + \Delta x]$. One then has
\begin{equation*}
\int_{x_{j}}^{x_{j}+\Delta x} |x-y|^{-\alpha} \diff x = \int_{x_{j}}^{x_{j}+\Delta x} (x-y)^{-\alpha} \diff x = \frac{(x_{j} + \Delta x - y)^{1-\alpha}}{1-\alpha} - \frac{(x_{j} - y)^{1-\alpha}}{1-\alpha}.
\end{equation*}
Integrating the above with respect to the variable $y$, one thus obtains
\begin{equation*}
C_{ij} = \frac{1}{1-\alpha} \int_{x_{i}}^{x_{i}+\Delta x} (x_{j} + \Delta x - y)^{1-\alpha} \diff y - \frac{1}{1-\alpha} \int_{x_{i}}^{x_{i} + \Delta x} (x_{j}-y)^{1-\alpha} \diff y.
\end{equation*}
The first integral gives
\begin{equation*}
\int_{x_{i}}^{x_{i}+\Delta x} (x_{j} + \Delta x - y)^{1-\alpha} \diff y = \frac{1}{2-\alpha} \left( (x_{j}+\Delta x - x_{i})^{2-\alpha} - (x_{j}-x_{i})^{2-\alpha} \right).
\end{equation*}
The second integral gives
\begin{equation*}
\int_{x_{i}}^{x_{i} + \Delta x} (x_{j}-y)^{1-\alpha} \diff y = \frac{1}{2-\alpha} \left( (x_{j}-x_{i})^{2-\alpha} - (x_{j}-x_{i}-\Delta x)^{2-\alpha} \right).
\end{equation*}
Thus, one obtains
\begin{equation*}
C_{ij} = \frac{1}{1-\alpha} \frac{1}{2-\alpha} \left( (x_{j}+\Delta x - x_{i})^{2-\alpha} - 2 (x_{j}-x_{i})^{2-\alpha} + (x_{j}-x_{i}-\Delta x)^{2-\alpha} \right).
\end{equation*}
Finally, recalling that $x_{j} = j \Delta x$ and $x_{i} = i \Delta x$, one obtains the expression
\begin{equation*}
C_{ij} = \frac{\Delta x^{2-\alpha}}{(1-\alpha)(2-\alpha)} \left( (j - i + 1)^{2-\alpha} - 2 (j - i)^{2-\alpha} + (j - i - 1)^{2-\alpha} \right).
\end{equation*}

To compute $C_{ii}$, we first compute the integral
\begin{equation*}
\int_{x_{i}}^{x_{i} + \Delta x} |x-y|^{-\alpha} \diff x = \int_{x_{i}}^{y} (y-x)^{-\alpha} \diff x + \int_{y}^{x_{i}+\Delta x} (x-y)^{-\alpha} \diff x = \frac{1}{1-\alpha} \left( (y-x_{i})^{1-\alpha} + (x_{i}+\Delta x -y)^{1-\alpha} \right).
\end{equation*}
Thus, integrating with respect to the variable $y$ gives us that
\begin{equation*}
C_{ii} = \frac{1}{1-\alpha} \int_{x_{i}}^{x_{i}+\Delta x} (y - x_{i})^{1-\alpha} + (x_{i}+\Delta x - y)^{1-\alpha} \diff y = \frac{2 \Delta x^{2-\alpha}}{(1-\alpha)(2-\alpha)}.
\end{equation*}
This concludes the proof.
\end{proof}

\section{Implementation details on the covariance matrix of the noise in dimension $d=2$}\label{app:2Dnoise}
In this appendix, we provide some details on the implementation of the discretization of the noise in Section~\ref{sec:spaceDisc} in $d=2$ dimensions.
We recall that $\alpha \in (0,2)$ and that the covariance matrix $C \in \mathbb{R}^{(n-1)^{2} \times (n-1)^{2}}$ has elements given by
\begin{equation*}
C_{ij} = \COV (\Falpha^{n}(t,x_{i}),\Falpha^{n}(t,x_{j})) = \int_{\msquare_{{x}_{i}}} \int_{\msquare_{{x}_{j}}} ||z_{1}-z_{2}||^{-\alpha} \diff z_{1} \diff z_{2},\ \text{for} \ i,j = 1,\ldots,(n-1)^{2}.
\end{equation*}
As we are considering $d = 2$ in this section, $z_{1} \in \msquare_{{x}_{i}} \subset \mathbb{R}^{2}$ and $z_{2} \in \msquare_{{x}_{j}} \subset \mathbb{R}^{2}$. For the $i$th space grid point $x_{i} \in [0,1]^{2}$, let us write $x_i=(x_{i}^{1},x_{i}^{2})\in[0,1]^2$. $x_{j}^{1}$ and $x_{j}^{2}$ are defined in the analogous way for $x_{j} \in [0,1]^{2}$. The following lemma reduces the computation of the $4$-dimensional integrals in $C_{ij}$ to the computation of $2$-dimensional integrals.
\begin{lemma}\label{lem:app_2Dnoise}
For $i,j = 1,\ldots,(n-1)^{2}$, one has that
\begin{equation*}
C_{ij} = \begin{cases}  \displaystyle\Delta x^{4-\alpha} \int_{-1}^{1} \int_{-1}^{1} \left( (A + q_{1})^{2} + (B + q_{2})^{2} \right)^{-\alpha/2} (1-|q_{1}|) (1-|q_{2}|) \diff q_{1} \diff q_{2},\ i \neq j, \\ \Delta x^{4-\alpha} I(\alpha),\ i=j. \end{cases}
\end{equation*}
Here, $A = \frac{I_{x}(i)-I_{x}(j)}{\Delta x}$, $B = \frac{I_{y}(i)-I_{y}(j)}{\Delta x}$, and $I(\alpha) = \Delta_{2}(-\alpha)$ is defined by
\begin{equation*}
\Delta_{2}(s) = 8 \frac{(3+s) 2^{s/2}+1}{(s+2) (s+3) (s+4)} + 4 B_{2}(s) - \frac{4 (s+4)}{s+2} B_{2}(s+2),\ s \in \mathbb{C},
\end{equation*}
where
\begin{equation*}
B_{2}(s) = \frac{2}{2+s} {}_{2} F_{1}(1/2,-s/2;3/2;-1),\ s \in \mathbb{C},
\end{equation*}
and ${}_{2} F_{1}$ is the hypergeometric function, see \cite{MR2630017}.
\end{lemma}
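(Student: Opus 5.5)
The computation splits into an off-diagonal case $i\neq j$ and a diagonal case $i=j$. In both cases the four-dimensional integral over $\msquare_{x_i}\times\msquare_{x_j}$ is first reduced to a two-dimensional integral in the difference variable. Write $z_1=x_i+\Delta x\, u$ and $z_2=x_j+\Delta x\, v$ with $u,v\in[0,1]^2$; the Jacobian is $\Delta x^4$. Homogeneity of $\|\cdot\|^{-\alpha}$ then gives
\[
C_{ij}=\Delta x^{4-\alpha}\int_{[0,1]^2}\int_{[0,1]^2}\bigl\|(A,B)+u-v\bigr\|^{-\alpha}\diff u\diff v ,
\]
where $(A,B)$ is the rescaled difference of the grid points (up to the sign convention in $I_x,I_y$, which only relabels $q\mapsto -q$ and leaves the weight unchanged).

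Next, change variables coordinatewise to $q=u-v\in[-1,1]^2$. For fixed $q_k$, the set of $(u_k,v_k)\in[0,1]^2$ with $u_k-v_k=q_k$ has length $1-|q_k|$. This is the standard fact that the difference of two independent uniform variables on $[0,1]$ has the triangular density $(1-|q|)_+$. Applying it in each coordinate separately gives
\[
C_{ij}=\Delta x^{4-\alpha}\int_{-1}^1\int_{-1}^1\bigl((A+q_1)^2+(B+q_2)^2\bigr)^{-\alpha/2}(1-|q_1|)(1-|q_2|)\diff q_1\diff q_2 ,
\]
which is the claim for $i\neq j$. The integrand is integrable because $\alpha<2$: the singularity is at most a point singularity of order $|q|^{-\alpha}$ in dimension $2$.

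For $i=j$ we have $A=B=0$, so $C_{ii}=\Delta x^{4-\alpha}I(\alpha)$ with
\[
I(\alpha)=\int_{[-1,1]^2}|q|^{-\alpha}(1-|q_1|)(1-|q_2|)\diff q .
\]
By symmetry this equals $4\int_{[0,1]^2}|q|^{-\alpha}(1-q_1)(1-q_2)\diff q$. Expanding $(1-q_1)(1-q_2)=1-q_1-q_2+q_1q_2$ reduces $I(\alpha)$ to moments of $|q|^{s}$ over the unit square, with $s=-\alpha$. These are the box integrals $\int_{[0,1]^2}|q|^s$, $\int q_1|q|^s$, and $\int q_1q_2|q|^s$.

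The plan for these moments is polar coordinates on the triangle $\{0\le q_2\le q_1\le 1\}$, doubled by symmetry. Integrate in $r$ from $0$ to $\sec\theta$ over $\theta\in[0,\pi/4]$. This produces terms of the form $\int_0^{\pi/4}\sec^{s+k}\theta\,(\cos\theta)^a(\sin\theta)^b\diff\theta$. Substituting $t=\tan\theta$ turns each into $\int_0^1(1+t^2)^{s/2}t^b\diff t$ or a close variant. The $b=0$ case is $\frac{1}{1}\,{}_2F_1(\tfrac12,-\tfrac s2;\tfrac32;-1)$, which is the origin of $B_2(s)$. The mixed moment $\int q_1q_2|q|^s$ is elementary: $\int_0^1 t(1+t^2)^{s/2}\diff t=\frac{2^{s/2+1}-1}{s+2}$, which produces the $2^{s/2}$ term. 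The first moment $\int q_1|q|^s$ is handled by an integration by parts that relates it to $B_2(s)$ and $B_2(s+2)$, accounting for the $\frac{s+4}{s+2}B_2(s+2)$ coefficient.

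This last assembly is the main obstacle: matching exactly the closed form $\Delta_2(s)$, which is the known two-dimensional box integral $\Delta_2$ from the literature on box integrals. Rather than rederive every constant, I would identify $I(\alpha)$ with the box-integral quantity $\Delta_2(-\alpha)$, the expected distance to the power $s$ between two uniform points in the unit square. Indeed, the $i=j$ formula above is exactly $\mathbb{E}\,|U-V|^{s}$ for $U,V$ independent uniform on $[0,1]^2$, so the closed form can be quoted from \cite{MR2630017}. I would then check the constants against the explicit moment computations, and confirm the $s=0$ value $\Delta_2(0)=1$ as a sanity test. Analytic continuation in $s$ justifies using the formula at $s=-\alpha\in(-2,0)$.
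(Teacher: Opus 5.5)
Your route is the paper's. You rescale by $\Delta x$ and reduce each coordinate pair to the triangular weight $1-|q_k|$; the paper gets the same weight by integrating along the diagonal with weight $2(\sqrt2/2-|r|)$, which is your difference-of-uniforms density. For $i=j$ you identify $I(\alpha)$ with the box integral $\Delta_2(-\alpha)$ and cite \cite{MR2630017}, exactly as the paper does.

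The problem is your claim that the explicit moments match the printed closed form exactly. They do not, and the sanity check you propose would show it. With $B_2(0)=1$ and $B_2(2)=\frac23$, the printed expression gives $\Delta_2(0)=\frac43+4-\frac{16}{3}=0$ instead of $1$, and $-\frac13$ instead of $\frac13$ at $s=2$. Carrying your polar-coordinate computation through gives the following; it is valid directly for $s>-2$, so no analytic continuation is needed.
\begin{align*}
\int_{[0,1]^2}|q|^s\diff q&=B_2(s),\\
\int_{[0,1]^2}q_1|q|^s\diff q&=\frac{1}{s+3}\Bigl(\frac{s+2}{2}B_2(s)+\frac{2^{1+s/2}-1}{s+2}\Bigr),\\
\int_{[0,1]^2}q_1q_2|q|^s\diff q&=\frac{2\,(2^{1+s/2}-1)}{(s+2)(s+4)}.
\end{align*}
Hence
\[
\Delta_2(s)=\frac{4B_2(s)}{s+3}-\frac{8\,(2^{1+s/2}-1)}{(s+2)(s+3)(s+4)},
\]
which gives $1$ at $s=0$, $\frac13$ at $s=2$, and the known value of $\Delta_2(1)$. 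The first moment needs no integration by parts. $B_2(s+2)$ enters only through the recurrence $(s+2)^2B_2(s)=(s+3)(s+4)B_2(s+2)-2^{2+s/2}$, obtained by integrating $\frac{\diff}{\diff t}\bigl[t(1+t^2)^{s/2+1}\bigr]$ over $[0,1]$.

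That recurrence turns the result into the printed shape, but with numerator $(3+s)2^{1+s/2}+1$ instead of $(3+s)2^{s/2}+1$. The printed formula is therefore off by $-8\cdot 2^{s/2}/((s+2)(s+4))$. Note that your mixed moment produces $2^{1+s/2}$, not $2^{s/2}$ as you wrote, and that is exactly where the printed constant goes wrong. The diagonal case holds only after this correction. Citing the literature, as both you and the paper do, does not establish the formula as stated. Your final step should record the corrected constant rather than assert an exact match.
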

\begin{proof}
We write $z_{1} = (x_{1},y_{1})\in[0,1]^2$ and $z_{2} = (x_{2},y_{2})\in[0,1]^2$, so that $C_{ij}$ can be written as
\begin{equation*}
C_{ij} = \int_{I_{x}(i)}^{I_{x}(i)+\Delta x} \int_{I_{y}(i)}^{I_{y}(i) + \Delta x} \int_{I_{x}(j)}^{I_{x}(j)+\Delta x} \int_{I_{y}(j)}^{I_{y}(j)+\Delta x} ((x_{1}-x_{2})^{2} + (y_{1}-y_{2})^{2})^{-\alpha/2} \diff x_{1} \diff x_{2} \diff y_{1} \diff y_{2}.
\end{equation*}
Here, we integrate over the following: $x_{1} \in [I_{x}(i),I_{x}(i)+\Delta x]$, $y_{1} \in [I_{y}(i),I_{y}(i)+\Delta x]$, $x_{2} \in [I_{x}(j),I_{x}(j)+\Delta x]$ and $y_{2} \in [I_{y}(j),I_{y}(j)+\Delta x]$. In other words, $C_{ij}$ is of the more general form
\begin{equation*}
(\ast) = \int_{a}^{a + \Delta x} \int_{b}^{b + \Delta x} \int_{c}^{c + \Delta x} \int_{d}^{d + \Delta x} \left( (x_{1}-x_{2})^{2} + (y_{1}-y_{2})^{2} \right)^{-\alpha/2} \diff x_{1} \diff y_{1} \diff x_{2} \diff y_{2},
\end{equation*}
for different values of $a,b,c,d \in [0,1]$. Here we integrate over the following: $x_{1} \in [a,a+\Delta x]$, $y_{1} \in [b,b+\Delta x]$, $x_{2} \in [c,c+\Delta x]$ and $y_{2} \in [d,d+\Delta x]$. 

Let us now introduce new variables $\tilde{x}_{1}, \tilde{x}_{2}, \tilde{y}_{1}$ and $\tilde{y}_{2}$ as follows:
\begin{equation*}
x_{1} = a + \x_{1} \Delta x,\  \diff x_{1} = \Delta x \diff \x_{1}
\end{equation*}
\begin{equation*}
x_{2} = a + \x_{2} \Delta x,\ \diff x_{2} = \Delta x \diff \x_{2}
\end{equation*}
\begin{equation*}
y_{1} = a + \y_{1} \Delta x,\  \diff y_{1} = \Delta x \diff \y_{1}
\end{equation*}
\begin{equation*}
y_{2} = a + \y_{2} \Delta x,\ \diff y_{2} = \Delta x \diff \y_{2}.
\end{equation*}
Then, for the above $4$ dimensional integral we have that
\begin{equation*}
(\ast) = \Delta x^{4} \int_{0}^{1} \int_{0}^{1} \int_{0}^{1} \int_{0}^{1} \left( (a - c + \Delta x (\x_{1}-\x_{2}))^{2} + (b - d + \Delta x (\y_{1}-\y_{2}))^{2} \right)^{-\alpha/2} \diff \x_{1} \diff \y_{1} \diff \x_{2} \diff \y_{2}.
\end{equation*}
Relabeling back to $x_{1},x_{2},y_{1},y_{2}$ and factorizing out $\Delta x^{\alpha}$, one then obtains
\begin{equation}\label{eq:app_2DRiesz}
(\ast) = \Delta x^{4-\alpha} \int_{0}^{1} \int_{0}^{1} \int_{0}^{1} \int_{0}^{1} \left( (A + (x_{1}-x_{2}))^{2} + (B + (y_{1}-y_{2}))^{2} \right)^{-\alpha/2} \diff x_{1} \diff y_{1} \diff x_{2} \diff y_{2},
\end{equation}
where $A = \frac{a-c}{\Delta x}$ and $B = \frac{b-d}{\Delta x}$.

If $i=j$ (meaning that $A = 0$ and $B = 0$), then $(\ast)$ reduces to
\begin{equation*}
(\ast) = \Delta x^{4 - \alpha} I(\alpha),
\end{equation*}
where we recall that $I(\alpha)$ is defined in the statement of the lemma. We refer to \cite{MR2630017} for more details.

If $i \neq j$ (meaning that $A \neq 0$ or $B \neq 0$), then we first consider
\begin{equation*}
(\square) = \int_{0}^{1} \int_{0}^{1} \left( (A + (x_{1}-x_{2}))^{2} + (B + (y_{1}-y_{2}))^{2} \right)^{-\alpha/2} \diff x_{1} \diff x_{2}.
\end{equation*} 
Instead of integration over $[0,1] \times [0,1]$ with respect to the measure $\diff x_{1} \diff x_{2}$, we integrate along $r \in [-\sqrt{2}/2,\sqrt{2}/2]$ (to match the domain $[0,1] \times [0,1]$) with respect to the measure $2 (\sqrt{2}/2-|r|) \diff r$
\begin{equation*}
(\square) = \int_{-\sqrt{2}/2}^{\sqrt{2}/2} \left( (A + 2 r / \sqrt{2})^{2} + (B + (y_{1}-y_{2}))^{2} \right)^{-\alpha/2} \times 2 (\sqrt{2}/2-|r|) \diff r.
\end{equation*}
We now change variables according to $q = \frac{2}{\sqrt{2}} r$ and obtain that
\begin{equation*}
(\square) = \int_{-1}^{1} \left( (A + q)^{2} + (B + (y_{1}-y_{2}))^{2} \right)^{-\alpha/2} (1-|q|) \diff q. 
\end{equation*}
Repeating the above also for the double integrals involving $y_{1},y_{2}$ in~\eqref{eq:app_2DRiesz}, gives us
\begin{equation*}
(\ast) = \Delta x^{4-\alpha} \int_{-1}^{1} \int_{-1}^{1} \left( (A + q_{1})^{2} + (B + q_{2})^{2} \right)^{-\alpha/2} (1-|q_{1}|) (1-|q_{2}|) \diff q_{1} \diff q_{2}, 
\end{equation*}
where we also added back the factor of $\Delta x^{4-\alpha}$. 
\end{proof}
In contrast to the dimension $d=1$ in Appendix~\ref{app:1Dnoise}, some integrals in $C_{ij}$ cannot be computed exactly.
On the one hand, the diagonal elements $C_{ii}$ involve the integral $I(\alpha)$ which can be computed
using matlab's command \textit{hypergeom} to evaluate the hypergeometric function.
On the other hand, the other elements of the covariance matrix need numerical approximations, where
we used the matlab command \textit{integral2}.
\end{appendix}

\bibliographystyle{plain}
\bibliography{labib}

\section{Acknowledgments}
We would like to thanks David Krantz (KTH Royal Institute of Technology) for interesting discussions on the numerical approximations
of singular quadruple integrals. The work was initiated during a research stay of the authors (DC and LQS) at the Bernoulli Center (EPFL, Switzerland)
under a Bernoulli Brainstorm program. We would like to thanks the staff of the Bernoulli Center for its support.
Part of this work was carried while DC was working for Ume{\aa} University.
Part of this work was carried thanks to the SFVE-A mobility program between France and Sweden.
The work of DC and JU was partially supported by the Swedish Research Council (VR) (projects nr. $2018-04443$ and $2024-04536$).
LQS is supported by the grant PID2021-123733NB-I00 (Ministerio de Economía y Competitividad, Spain).
The computations were performed on resources provided by the Swedish National Infrastructure
for Computing (SNIC) at HPC2N, Ume{\aa} University and by the National Academic Infrastructure for Supercomputing in Sweden (NAISS) at UPPMAX, Uppsala University, at Dardel, KTH, and at Vera, Chalmers e-Commons at Chalmers University of Technology
and partially funded by the Swedish Research Council through grant agreement no. 2022-06725.

\end{document}